\DeclareMathOperator{\sech}{sech}
\DeclareMathOperator{\cosech}{cosech}
\DeclarePairedDelimiter\floor{\lfloor}{\rfloor}
\colorlet{linkequation}{cyan}
\newcommand*{\SavedEqref}{}
\let\SavedEqref\eqref
\renewcommand*{\eqref}[1]{%
  \begingroup
    \hypersetup{
      linkcolor=linkequation,
      linkbordercolor=linkequation,
    }%
    \SavedEqref{#1}%
  \endgroup
}
\newcommand{\clonelabel}[2]{\@bsphack
  \expandafter\ifx\csname r@#2\endcsname\relax
  \else\protected@write\@auxout{}{\string\newlabel{#1}%
    {\csname r@#2\endcsname}}%
  \fi
  \expandafter\ifx\csname r@#2@cref\endcsname\relax
  \else\protected@write\@auxout{}{\string\newlabel{#1@cref}%
    {\csname r@#2@cref\endcsname}}%
  \fi
  \@esphack}
\newcommand{\LL}{\mathcal{L}}
\begin{document}

\title{Dirichlet-Neumann and Neumann-Neumann Waveform Relaxation
  Algorithms for Time Fractional sub-diffusion and Diffusion-Wave Equations}

\author{Soura Sana\footnotemark[1]\and Bankim C. Mandal\footnotemark[2]}

\maketitle

\begin{abstract}
  In this article, we have studied the convergence behavior of the Dirichlet-Neumann and Neumann-Neumann waveform relaxation algorithms for time-fractional sub-diffusion and diffusion-wave equations in 1D \& 2D for regular domains, where the dimensionless diffusion coefficient takes different constant values in different subdomains. We first observe that different diffusion coefficients lead to different relaxation parameters for optimal convergence. Using these optimal relaxation parameters, our analysis estimates the slow superlinear convergence of the algorithms when the fractional order of the time derivative is close to zero, almost finite step convergence when the order is close to two, and in between, the superlinear convergence becomes faster as fractional order increases. So, we have successfully caught the transition of convergence rate with the change of fractional order of the time derivative in estimates and verified them with the numerical experiments.
\end{abstract}

\begin{keywords}
	Dirichlet-Neumann, Neumann-Neumann, Waveform Relaxation, Domain Decomposition, Sub-diffusion, Diffusion-wave
\end{keywords}

\begin{AMS}
	65M55, 34K37
\end{AMS}

\renewcommand{\thefootnote}{\fnsymbol{footnote}}
\footnotetext[1]{School of Basic Sciences, IIT Bhubaneswar, India ({\tt ss87@iitbbs.ac.in}).}
\footnotetext[2]{School of Basic Sciences, IIT Bhubaneswar, India ({\tt bmandal@iitbbs.ac.in}).}

\section{Introduction}
Parallel algorithms for fractional partial differential equations (FPDEs) \cite{xu2015parareal,lorin2020parallel,wu2017convergence} for numerical solutions are currently active research topics due to the ever-growing application of fractional PDEs in various fields like control, robotics, bio-engineering, solid and fluid mechanics, etc. \cite{hilfer2000applications,magin2004fractional,rossikhin2010application}. The numerical complexity of FPED models is much higher than its classical counterpart due to the dense matrix structure. To overcome these difficulties, parallel algorithms are a good choice to fulfill the high demand for better simulation within a reasonable time. However, due to the memory effects of fractional derivatives, it is not easy to always find an efficient parallel method, so we are forced to use classical parallel techniques for PDEs in FPDEs models whenever possible.

Two basic ways to use parallel algorithms in PDE models: one before discretization, i.e., the continuous procedure like Classical and Optimized Schwarz, Neumann-Neumann \cite{schwarz1870ueber,lions1990schwarz,bourgat1988variational}, etc. developed by Schwarz, Lions, Bourgat, and others and another one is after discretization like Additive Schwarz, Multiplicative Schwarz, Restrictive Additive Schwarz \cite{dryja1991additive,dryja1990some,cai1999restricted}, etc. introduced by Dryja, Widlund and Cai et al. And there is an ample number of other processes at algorithmic levels. Domain Decomposition (DD) algorithms on their own can handle evolution problems in discretizing time domain and are executed on each step. However, it increases the total execution time and communication cost for the higher volume of transferring data among processors, so the better choice is to use waveform relaxation \cite{lelarasmee1982waveform} developed by Lelarsmee et al. and introduced in DD by Gander, Stuart, Giladi, Keller and others \cite{gander1998space,giladi2002space}. For further parallelization, one can use the parallel technique in time like Parareal, Paradiag, Paraexp, etc. introduced by Lions, Maday, Gander, and others see  \cite{lions2001resolution,maday2008parallelization,gander2013paraexp}. In this article, we have used continuous approaches, particularly Dirichlet-Neumann and Neumann-Neumann waveform relaxation, due to the efficiency of handling diffusion-type models relatively better than Classical Schwarz and Optimized Schwarz. For more details, refer to the articles \cite{etna_vol45_pp424-456,gander2021dirichlet,mandal2017neumann}. 

 The connection between fractional diffusion and anomalous diffusion under the framework of continuous time random walk was established by the work of Compte and Hifler \cite{hilfer1995fractional,compte1996stochastic} in 1995. Basically, random walk with the characterization of Markovian and Gaussian properties leads to the linear time dependence of the mean square displacement, i.e., $\langle x^2(t)\rangle \sim t$, which gives classical diffusion \cite{pearson1905problem,einstein1905motion,von1906kinetischen}. Moreover, when the mean square displacement is nonlinear in time, i.e., $\langle x^2(t)\rangle \sim t^{\lambda}$ (sub-diffusion: $\lambda < 1$, super-diffusion: $\lambda > 1$) then this non-Gaussian, non-Markovian, Levy Walks types model is considered as anomalous diffusion. Compte \cite{compte1996stochastic} showed that the time-fractional (Riemann-Liouville) and space-fractional (Riesz) diffusion equations are produced by the dynamical equation of all continuous time random walk with decoupled temporal and spatial memories with either temporal or spatial scale invariance in the limiting situation. In this article, we use Caputo fractional time derivative. The model we get from Compte's work using the Riemann-Liouville derivative and our model on fractional diffusion, also known as the sub-diffusion equation, using Caputo derivative are equivalent without having the forcing term for fractional time derivative order $0< \alpha < 1$, see \cite{evangelista2018fractional}. $\alpha = 1$ imply classical diffusion corresponding to $\lambda = 1$. The model with the time-fractional derivative $\alpha \in (1,2)$ is known as diffusion-wave, enhanced diffusion, or super-diffusion. Except $1D$, the solutions of this model in $2D$ and $3D$  do not follow the probability distribution rule, i.e., the solution may change sign, see \cite{hanygad2002multidimensional}, so in that sense $\alpha >1$ does not follow the super-diffusive random walk model corresponding to $\lambda>1$ in higher dimension. Nonetheless, this model has physical applications in viscoelasticity and constant Q seismic-wave propagation \cite{pipkin2012lectures,caputo1967linear}.

The analytic solution of the fractional diffusion wave (FDW) equation in terms of Fox's H function was first investigated by Wess and Schneider \cite{schneider1989fractional}. Mainardi \cite{mainardi1996fundamental} used Laplace transformation to obtain the fundamental solution of FDW equations in terms of the M-Wright function. Agrawal \cite{agrawal2003response} used the separation of variables technique to reduce the FDW equation in terms of the set of infinite Fractional Differential equations and then identify the eigenfunctions that produce the solutions in the form of Duhamel's integral.

From the numerical standpoint, the solution of fractional diffusion (FD) equations  Yuste and Acedo \cite{yuste2005explicit} used forward time central space with the combination of  Grunwald–Letnikov discretization of the Riemann–Liouville derivative to obtain an explicit scheme with the first order accuracy in time. Laglands and Henry \cite{langlands2005accuracy} used backward time central space with the combination of the L1 scheme developed by Oldham and Spanier for fractional time derivative to obtain a fully implicit finite difference scheme, and its unconditional stability was proved by Chen et al. \cite{chen2007fourier}. Later Stynes et al. \cite{stynes2017error}  introduced the idea of applying the graded mesh to capture the weak singularity effect in computation and showed how the order of convergence of the scheme related to the mesh grading, and they found and proved optimal mesh grading technique. Sanz-Serna \cite{sanz1988numerical} investigated the numerical solution of a partial integrodifferential equation which may be considered a $3/2$ order time fractional wave equation. Sun and Wu \cite{sun2006fully} gave a fully discrete, unconditionally stable difference scheme of order $O(t^{3-\alpha}+h^2)$ for the fractional wave equation. 

We have organized this article by introducing the model problem in Section 2. In Sections 3 \& 4, we have briefly described the DNWR and NNWR algorithms. In Section 5, we present the necessary auxiliary results for proving the main convergence theorem. In Section 6, we have considered the DNWR algorithm for two subdomain problems and prove its convergence by estimating the error. In Sections 7 \& 8, we have done the same but for the NNWR algorithm in multiple subdomains for 1D and two subdomains for 2D. In Section 9, we have chosen a particular model and verified all theoretical results with the numerical ones.

\section{Model problem}
To extend DNWR and NNWR algorithm for fractional PDEs, we have taken the linear time-fractional diffusion wave model\cite{mainardi2003wright,evangelista2018fractional}, where the fractional order $\nu$ takes any constant value from zero to one, i.e. $2\nu \in(0,2)$. When $2\nu = 1$ the anomalous model converts to normal diffusion. On the bounded domain $\Omega \subset \mathbb{R}^d$, $0<t<T$, our model problem reads as follows:
\begin{equation}\label{model_problem}
\begin{cases}
D^{2\nu}_{t}u = \nabla\cdot\left(\kappa(\boldsymbol{x},t)\nabla u\right)+f(\boldsymbol{x},t), & \textrm{in}\; \Omega\times(0,T),\\
u(\boldsymbol{x},t) = g(\boldsymbol{x},t), & \textrm{on}\; \partial\Omega\times(0,T),\\
u(\boldsymbol{x},0) = u_{0}(\boldsymbol{x}), & \textrm{in}\; \Omega,
\end{cases}\,.
\end{equation}
Here, $\kappa(\boldsymbol{x},t)>0$ is the dimensionless diffusion coefficient and $D^{\alpha}_{t}$ is the Caputo fractional derivative \cite{caputo1967linear} defined for order $\alpha$, $n-1<\alpha<n$ and $n \in \mathbb{N}$  as follows:
$$D^{\alpha}_{t}x(t) := \frac{1}{\Gamma(n-\alpha)}\int_0^t(t-\tau)^{n-\alpha-1}x^{(n)}(\tau)d\tau. $$
As the diffusion coefficient takes different constant value on different subdomain and the source term is sufficiently smooth, so we introduced continuity of $u$ and the flux on the interface. For existence and uniqueness of the weak solution of \eqref{model_problem} see \cite{van2021existence}.
We will introduce DNWR and NNWR algorithms for \eqref{model_problem}:

\section{The Dirichlet-Neumann Waveform Relaxation algorithm}
The DNWR method is a semi-parallel (except two sub-domain case) type iterative algorithm, combining the substructuring DD method for space and waveform relaxation in time. To define the DNWR algorithm for the model problem
\eqref{model_problem}, the spatial domain $\Omega$ is partitioned into two non-overlapping subdomains $\Omega_{1}$ and $\Omega_{2}$ with the interface $\Gamma:=\partial\Omega_{1}\cap\partial\Omega_{2}$. $u_{i},\boldsymbol{n}_{i}$ are respectively the restriction of solution $u$ and the unit outward normal on $\Gamma$ for $\Omega_{i}$, $i=1,2$.
The DNWR algorithm starts with an initial guess $h^{(0)}(\boldsymbol{x},t)$ along the interface $\Gamma\times(0,T)$, and compute the  $u_i^{(k)},i = 1,2, \text{ for } k=1,2,\ldots$ by the followings Dirichlet-Neumann steps:
\begin{equation}\label{DNWR}
\arraycolsep0.008em
\begin{array}{rcll}
\begin{cases}
D^{2\nu}_{t}u_{1}^{(k)} = \nabla\cdot\left(\kappa_1\nabla u_{1}^{(k)}\right)+f, & \textrm{in}\; \Omega_{1},\\
u_{1}^{(k)}(\boldsymbol{x},0) = u_{0}(\boldsymbol{x}), & \textrm{in}\; \Omega_{1},\\
u_{1}^{(k)} = h^{(k-1)}, & \textrm{on}\; \Gamma,\\
u_{1}^{(k)}=g, & \textrm{on}\; \partial\Omega_{1}\setminus\Gamma,
\end{cases}
\end{array}\
\begin{array}{rcll}
\begin{cases}
D^{2\nu}_{t}u_{2}^{(k)} = \nabla\cdot\left(\kappa_2\nabla u_{2}^{(k)}\right)+f, & \textrm{in}\; \Omega_{2},\\
u_{2}^{(k)}(\boldsymbol{x},0) = u_{0}(\boldsymbol{x}), & \textrm{in}\; \Omega_{2},\\
\kappa_2\partial_{\boldsymbol{n}_{2}} u_{2}^{(k)}  =  -\kappa_1\partial_{\boldsymbol{n}_{1}} u_{1}^{(k)}, & \textrm{on}\; \Gamma,\\
u_{2}^{(k)}=g, & \textrm{on}\;\partial\Omega_{2}\setminus\Gamma,
\end{cases}
\end{array}
\end{equation}
and then with the relaxation parameter $\theta\in(0,1]$ update the interface data using
\begin{equation}\label{DNWR2}
  h^{(k)}(\boldsymbol{x},t)=\theta u_{2}^{(k)}\left|_{\Gamma\times(0,T)}\right.+(1-\theta)h^{(k-1)}(\boldsymbol{x},t).
\end{equation}
The main goal of our analysis is to study how the update part error $w^{(k-1)}(\boldsymbol{x},t) := u|_{\Gamma\times(0,T)} - h^{(k-1)}(\boldsymbol{x},t)$, where $u|_{\Gamma\times(0,T)}$ is the exact solution on the interface, converges to zero, and by linearity it suffices to consider the convergence of the so called error equations, with $f(\boldsymbol{x},t)=0$, $g(\boldsymbol{x},t)=0$,
$u_{0}(\boldsymbol{x})=0$ in \eqref{DNWR}. This will be studied in section \ref{sectionDNWR}.

\section{The Neumann-Neumann Waveform Relaxation algorithm}\label{Section3}
To introduce the fully parallel NNWR algorithm for the model problem \eqref{model_problem} on multiple subdomains, the domain $\Omega$ is partitioned into non-overlapping subdomains $\Omega_{i}$, $1\leq i\leq N$.
Set $\Gamma_i:=\partial\Omega_{i}\setminus\partial\Omega$,
$\Lambda_i:=\{j\in\{1,\ldots,N\}: \Gamma_{i}\cap\Gamma_{j} \, \mbox{has nonzero measure}\}$ and
$\Gamma_{ij}:=\partial\Omega_{i}\cap\partial\Omega_{j}$, so that the
interface of $\Omega_{i}$ can be rewritten as
$\Gamma_i=\bigcup_{j\in\Lambda_i}\Gamma_{ij}$. We denote by $\boldsymbol{n}_{ij}$
the unit outward normal for $\Omega_{i}$ on the interface
$\Gamma_{ij}$.
The NNWR algorithm starts with an initial guess
$h_{ij}^{(0)}(\boldsymbol{x},t)$ along the interfaces
$\Gamma_{ij}\times (0,T)$, $j\in\Lambda_i$, $i=1,\ldots,N$, and then
performs the following two-step iteration: at each iteration $k$, one
first solves Dirichlet problem on each $\Omega_{i}$ in parallel,
\begin{equation}\label{NNWRD}
  \begin{array}{rcll}
  \begin{cases}
   D^{2\nu}_{t}u_{i}^{(k)} = \nabla\cdot\left(\kappa(\boldsymbol{x},t)\nabla u_{i}^{(k)}\right) + f, & \mbox{in $\Omega_{i}$},\\
    u_{i}^{(k)}(\boldsymbol{x},0) = u_{0}(\boldsymbol{x}), & \mbox{in $\Omega_{i}$},\\
    u_{i}^{(k)} = g, & \mbox{on $\partial\Omega_{i}\setminus\Gamma_i$},\\
   u_{i}^{(k)} = h_{ij}^{(k-1)}, & \mbox{on $\Gamma_{ij}, j\in\Lambda_i$},
   \end{cases}
  \end{array}
\end{equation}
and then solves the Neumann problems on all subdomains in parallel,
\begin{equation}\label{NNWRN}
\begin{array}{rcll}
\begin{cases}
D^{2\nu}_{t}\psi_{i}^{(k)} = \nabla\cdot\left(\kappa(\boldsymbol{x},t)\nabla \psi_{i}^{(k)}\right), & \mbox{in $\Omega_{i}$},\\
   \psi_{i}^{(k)}(\boldsymbol{x},0) = 0, & \mbox{in $\Omega_{i}$},\\
  \psi_{i}^{(k)} = 0, & \mbox{on $\partial\Omega_{i}\setminus\Gamma_i$},\\
   \partial_{\boldsymbol{n}_{ij}}\psi_{i}^{(k)} = \partial_{\boldsymbol{n}_{ij}}u_{i}^{(k)}+\partial_{\boldsymbol{n}_{ji}}u_{j}^{(k)}, & \mbox{on $\Gamma_{ij}, j\in\Lambda_i$}.
\end{cases}
\end{array}
\end{equation}
The interface values are then updated with the formula
\begin{equation}\label{NNWR2}
  w_{ij}^{(k)}(\boldsymbol{x},t)=w_{ij}^{(k-1)}(\boldsymbol{x},t)-\theta_{ij}
  \left( \psi_{i}^{(k)}\left|_{\Gamma_{ij}\times(0,T)}\right.+\psi_{j}^{(k)}\left|_{\Gamma_{ij}\times(0,T)}\right.\right),
\end{equation}
where $\theta_{ij}\in(0,1]$ is a relaxation parameter. Our goal is to study the convergence of  $w_{ij}^{(k-1)}(\boldsymbol{x},t) := u_i|_{\Gamma\times(0,T)} - h_{ij}^{(k-1)}(\boldsymbol{x},t)$ as $k \to \infty$. 

Before presenting the main convergence result, we discuss a few necessary lemmas in the next section, which will be helpful for sections 6, 7 \& 8.

\section{Auxiliary Results}\label{Section4}
The convergence results of DNWR and NNWR are based on the kernel estimates arising in the Laplace transform of Caputo derivatives from the error equations. In this section, we will prove necessary lemmas relevant to obtain the estimates. 

\begin{lemma}
\label{SimpleLaplaceLemma}
Let $g$ and $w$ be two real-valued functions in $(0,\infty)$ with
$\hat{w}(s)=\mathcal{L}\left\{ w(t)\right\}$ the Laplace transform of
$w$. Then for $t\in(0,T)$, we have the following properties:
\begin{enumerate}
  \item[(i)] \label{L1}If $g(t)\geq0$ and $w(t)\geq0$, then $(g*w)(t)\geq0$.

  \item[(ii)] \label{L2} $\| g*w\|_{L^{1}(0,T)}\leq\| g\|_{L^{1}(0,T)}\| w\|_{L^{1}(0,T)}.$

  \item[(iii)] \label{L3} $\| g*w\|_{L^{\infty}(0,T)}\leq\| g\|_{L^{\infty}(0,T)}\|w\|_{L^{1}(0,T)}.$

 \item[(iv)] \label{L5}If $w(t)\geq 0$ be $L^1$-integrable function on $(0,T)$, then $\int_{0}^tw(\tau)d\tau\leq \lim_{s\rightarrow0+}\hat{w}(Re(s))$.
\end{enumerate}
\end{lemma}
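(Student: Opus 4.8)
The plan is to read the claim off directly from the definition of the Laplace transform together with the nonnegativity of $w$, using a monotone convergence argument to exchange the limit and the integral. Writing $\sigma := \operatorname{Re}(s) > 0$, recall that
\begin{equation*}
\hat{w}(\sigma) = \int_{0}^{\infty} e^{-\sigma\tau} w(\tau)\, d\tau .
\end{equation*}
Fix $t \in (0,T)$. The first step is to discard the tail of this integral: since $w \geq 0$ and $e^{-\sigma\tau} \in (0,1]$ for $\tau \geq 0$, the contribution of $[t,\infty)$ is nonnegative, so that
\begin{equation*}
\hat{w}(\sigma) \geq \int_{0}^{t} e^{-\sigma\tau} w(\tau)\, d\tau .
\end{equation*}

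The second step is to let $\sigma \to 0^{+}$ on the right-hand side. On the bounded interval $[0,t]$ the integrand $e^{-\sigma\tau} w(\tau)$ increases pointwise to $w(\tau)$ as $\sigma \downarrow 0$ and is dominated by the integrable function $w$; hence by monotone (or equivalently dominated) convergence,
\begin{equation*}
\lim_{\sigma \to 0^{+}} \int_{0}^{t} e^{-\sigma\tau} w(\tau)\, d\tau = \int_{0}^{t} w(\tau)\, d\tau .
\end{equation*}
Combining the two displays and passing to the limit $\sigma \to 0^{+}$ in the inequality yields $\int_{0}^{t} w(\tau)\, d\tau \leq \lim_{\sigma \to 0^{+}} \hat{w}(\sigma)$, which is exactly the assertion.

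It remains only to confirm that the limit on the right is well defined: because $e^{-\sigma\tau}$ increases as $\sigma$ decreases, $\hat{w}(\sigma)$ is monotone in $\sigma$, so $\lim_{\sigma \to 0^{+}} \hat{w}(\sigma)$ exists in $[0,\infty]$ (indeed it equals $\int_{0}^{\infty} w$ by monotone convergence); if this limit is $+\infty$ the inequality is trivial, and otherwise the argument above applies verbatim. The only point genuinely requiring care — and the single place where an analytic fact is actually invoked — is the interchange of limit and integral, but this is immediate from the nonnegativity of $w$, so I do not expect any real obstacle beyond this routine measure-theoretic justification.
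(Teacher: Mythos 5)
Your proof is correct and follows essentially the same route as the paper's: both arguments use the nonnegativity of $w$ to compare $\int_0^t$ with the full Laplace integral and invoke a convergence theorem (you use monotone convergence, the paper dominated convergence) to pass the limit $s\to 0^+$ inside the integral over the bounded interval $[0,t]$. Your added remark that $\hat{w}(\sigma)$ is monotone in $\sigma$, so the limit always exists in $[0,\infty]$, is a small refinement the paper omits, but it does not change the substance of the argument.
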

\begin{proof}
		  The first four proofs $(i)-(iv)$ follow directly from the definitions.
		  \begin{enumerate}
		  	\item [(v)] For $w(t) \geq 0$ on $t \in (0,T)$, we have 
		  	\begin{align*}
		  		\int_{0}^t|w(\tau)|d\tau &= \int_{0}^{t} \lim_{s \to 0+} |\exp(-s\tau) w(\tau)| d\tau,
		  	\end{align*}
	  		swapping the order of limit and integration using the dominated convergence theorem, which is possible as $w \in L^1(0,T)$, gives
	  		\begin{align*}
	  		\int_{0}^{t} \lim_{s \to 0+} |\exp(-s\tau) w(\tau)| d\tau
		  		&= \lim_{s \to 0+} \int_{0}^{t}  |\exp(-s\tau) w(\tau)| d\tau \\
		  		& \leq \lim_{s \to 0+} \int_{0}^{\infty}  |\exp(-s\tau) w(\tau)| d\tau \\
		  		&= \lim_{s \to 0+} \hat{w}(Re(s)).
		  	\end{align*}
		  \end{enumerate}
\hfill \end{proof}

\begin{lemma}\label{PositivityLemma}
  Let $0 \leq l_1 <l_2$ and $s$ be a complex variable. Then, for $t\in(0,\infty)$ and
  \begin{enumerate}
  	\item [(i)] 
  	for $0<\alpha< 1$
  	\begin{equation*}
  		\Phi(t):=\mathcal{L}^{-1}\left\{ \frac{\sinh(l_1s^{\alpha})}
  		{\sinh(l_2s^{\alpha})}\right\} \quad\mbox{and}\quad
  		\Psi(t):=\mathcal{L}^{-1}\left\{\frac{\cosh(l_1s^{\alpha})}{\cosh(l_2s^{\alpha})}\right\}
  	\end{equation*}
  exist.
  \item [(ii)]
  for $0<\alpha \leq 1/2$
  \begin{equation*}
  	\Phi(t) \geq 0 \textit{ and } \Psi(t) \geq 0.
  \end{equation*}
  \end{enumerate}
\end{lemma}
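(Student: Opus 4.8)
The plan is to treat existence (part (i)) and positivity (part (ii)) separately, and to reduce positivity to the classical half-order case by a subordination argument, which is exactly what forces the restriction $2\alpha\le 1$ in part (ii) while part (i) survives for the whole range $0<\alpha<1$.

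For existence, I would first locate the singularities of the two ratios. Writing $s$ in the open right half-plane gives $\arg(s^{\alpha})=\alpha\arg(s)\in(-\alpha\pi/2,\alpha\pi/2)$, and since $\alpha<1$ this stays strictly inside $(-\pi/2,\pi/2)$; hence $s^{\alpha}$ is never purely imaginary there, so $\sinh(l_2 s^{\alpha})$ and $\cosh(l_2 s^{\alpha})$ have no zeros and both ratios are analytic for $\mathrm{Re}(s)>0$. Next I would record the decay: multiplying numerator and denominator by $e^{-l_2 s^{\alpha}}$ yields, for $\mathrm{Re}(s^{\alpha})>0$, bounds $|\sinh(l_1 s^{\alpha})/\sinh(l_2 s^{\alpha})|\lesssim e^{-(l_2-l_1)\mathrm{Re}(s^{\alpha})}$ and similarly for the $\cosh$ ratio. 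Along a Bromwich line $\mathrm{Re}(s)=c>0$ one has $\mathrm{Re}(s^{\alpha})\ge |s|^{\alpha}\cos(\alpha\pi/2)\to\infty$ as $|\mathrm{Im}(s)|\to\infty$, so the integrands decay like a stretched exponential and the Bromwich integrals converge absolutely; this shows $\Phi$ and $\Psi$ exist as continuous functions for all $0<\alpha<1$.

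For positivity I would use the algebraic identity $\sinh(l_1 s^{\alpha})/\sinh(l_2 s^{\alpha})=F(s^{2\alpha})$ with $F(z):=\sinh(l_1\sqrt z)/\sinh(l_2\sqrt z)$, and likewise $\cosh(l_1 s^{\alpha})/\cosh(l_2 s^{\alpha})=H(s^{2\alpha})$ with $H(z):=\cosh(l_1\sqrt z)/\cosh(l_2\sqrt z)$; the branch identity $(s^{2\alpha})^{1/2}=s^{\alpha}$ holds in the right half-plane precisely because $2\alpha\le1$. Let $G:=\mathcal{L}^{-1}\{F\}$ and $\tilde G:=\mathcal{L}^{-1}\{H\}$ be the half-order (heat-equation) kernels, so that $\int_0^\infty G(\xi)e^{-\xi z}\,d\xi=F(z)$. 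Evaluating this at $z=s^{2\alpha}$, together with the one-sided stable density $p_{\gamma}(\xi,\cdot):=\mathcal{L}^{-1}\{e^{-\xi s^{\gamma}}\}$, a Fubini argument gives the subordination representations $\Phi(t)=\int_0^\infty G(\xi)\,p_{2\alpha}(\xi,t)\,d\xi$ and $\Psi(t)=\int_0^\infty \tilde G(\xi)\,p_{2\alpha}(\xi,t)\,d\xi$. Since $2\alpha\le1$, the stable density satisfies $p_{2\alpha}(\xi,t)\ge0$, so once the base kernels are shown nonnegative the integrals are nonnegative and $\Phi,\Psi\ge0$ follow, the nonnegativity of the convolution structure being guaranteed by Lemma~\ref{SimpleLaplaceLemma}(i).

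The main obstacle is the base-case positivity $G\ge0$ and $\tilde G\ge0$, i.e. the nonnegativity of the half-order kernels. This is the one place where a naive term-by-term bound fails: expanding $F(z)=\sum_{n\ge0}\big(e^{-((2n+1)l_2-l_1)\sqrt z}-e^{-((2n+1)l_2+l_1)\sqrt z}\big)$ produces alternating signs, and the summands $\mathcal{L}^{-1}\{e^{-b\sqrt z}\}(t)=\tfrac{b}{2\sqrt{\pi t^3}}e^{-b^2/(4t)}$ are not monotone in $b$, so the cancellation cannot be resolved elementarily. I would instead identify $G$ and $\tilde G$ through the heat equation on $(0,l_2)$: $F(z)$ and $H(z)$ are exactly the Laplace transforms of the trace at $x=l_1$ of the solution of $v_t=v_{xx}$ with a nonnegative datum $\phi\ge0$ imposed at $x=l_2$ (and a Dirichlet, resp. Neumann, condition at $x=0$). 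The maximum principle forces $v\ge0$, hence $(G*\phi)(t)\ge0$ for every $\phi\ge0$, which yields $G\ge0$ a.e., and likewise $\tilde G\ge0$. The remaining care is in justifying the interchange of integration in the subordination formula, which follows from the absolute convergence already established, and in handling the endpoint $2\alpha=1$, where $p_{1}(\xi,\cdot)$ degenerates to a translate of the Dirac mass.
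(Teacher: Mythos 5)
Your proposal is correct, and part (i) matches the paper's argument in substance: analyticity of the two ratios in the right half-plane plus stretched-exponential decay of the symbol, followed by a standard inversion theorem (the paper verifies the criterion $|s^{p}\hat f(s)|\to 0$ and cites Churchill, while you check absolute convergence of the Bromwich integral directly; both are fine). For part (ii), however, you take a genuinely different route. The paper interprets $\Phi$ and $\Psi$ directly as response kernels of the \emph{time-fractional} equation $D_t^{2\alpha}u=u_{xx}$ of order $2\alpha\le 1$ (Dirichlet datum $g\ge 0$ at $x=l_2$ on $(0,l_2)$, resp.\ symmetric data on $(-l_2,l_2)$) and concludes nonnegativity from the fractional maximum principle of \cite{luchko2009maximum}. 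You instead subordinate to the classical case: writing the symbols as $F(s^{2\alpha})$ and $H(s^{2\alpha})$ with $F,H$ the half-order (heat) symbols, you represent $\Phi(t)=\int_0^\infty G(\xi)\,p_{2\alpha}(\xi,t)\,d\xi$ (similarly $\Psi$) with $p_{2\alpha}=\mathcal{L}^{-1}\{e^{-\xi s^{2\alpha}}\}\ge 0$, and prove $G,\tilde G\ge 0$ by the classical parabolic maximum principle. What your route buys: it replaces the less elementary fractional maximum principle by classical parabolic theory plus positivity of the one-sided stable density, a fact available inside the paper itself (Lemma \ref{invL_exp}, parts (i)--(iii), via the M-Wright representation), and it isolates exactly where $\alpha\le 1/2$ enters. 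What the paper's route buys: it is shorter, needs no Fubini/Tonelli bookkeeping, and treats the whole range $0<2\alpha\le1$ uniformly, with no degenerate Dirac case $p_{1}(\xi,\cdot)=\delta(\cdot-\xi)$ to handle separately. Two small corrections to your write-up, neither of which breaks the proof. First, the branch identity $(s^{2\alpha})^{1/2}=s^{\alpha}$ holds for \emph{all} $\alpha\in(0,1)$ on $\mathrm{Re}(s)>0$, since $|2\alpha\arg s|<\pi$ there; what genuinely requires $2\alpha\le 1$ is that $s^{2\alpha}$ remain in the half-plane of convergence of $\mathcal{L}\{G\}$, so that evaluating $F(z)=\int_0^\infty G(\xi)e^{-\xi z}\,d\xi$ at $z=s^{2\alpha}$ is legitimate, together with the positivity of $p_{2\alpha}$. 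Second, the interchange of integrals in the subordination formula is best justified by Tonelli, since every factor is nonnegative once $G\ge0$ and $p_{2\alpha}\ge0$ are in hand (and you establish both independently of the subordination, so there is no circularity); the absolute convergence of the Bromwich integral that you invoke concerns a different integral and does not by itself control the $\xi$-integration.
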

\begin{proof}
	\begin{enumerate}
		\item [(i)]
	     Setting $s = re^{i\theta}, \theta \in (-\pi/2,\pi/2)$, a short calculation shows that for $0 \leq l_1 <l_2$ and for every positive
		$p$
		\begin{align*}
			\left|\frac{s^p \sinh(l_1s^{\alpha})}{\sinh(l_2s^{\alpha})}\right|
			&\leq r^p \left|\frac{\exp(l_1r^{\alpha}\cos(\alpha\theta)) + \exp(-l_1r^{\alpha}\cos(\alpha\theta))}{\exp(l_2r^{\alpha}\cos(\alpha\theta)) - \exp(-l_2r^{\alpha}\cos(\alpha\theta))}\right| \\
			&= \frac{r^p}{\exp((l_2-l_1)r^{\alpha}\cos(\alpha\theta))}\frac{1+\exp(-2l_1r^{\alpha}\cos(\alpha\theta))}{1-\exp(-2l_2r^{\alpha}\cos(\alpha\theta))} \\
			&\to 0 \quad \text{as} \ r \to \infty \textrm{ for fixed } \alpha \in (0,1) \textrm{ and } \theta \in (-\pi/2, \pi/2).
		\end{align*}
		So by \cite[p.~178]{churchill1971operational}, its inverse Laplace transform exists and is continuous (in fact, infinitely differentiable). Thus, $\Phi(t)$ is a continuous function. A similar argument holds for $\Psi(t)$.
		\item [(ii)]
		To prove the non-negativity of $\Phi(t)$ and $\Psi(t)$ for $0<\alpha \leq 1/2$, first consider the following sub-diffusion equation:
		$D^{2\alpha}_{t}u- \Delta u=0$ on $(0,l_2)$ with initial condition $u(x,0)=0$ and boundary conditions
		$u(0,t)=0$, $u(l_2,t)=g(t)$.  Now performing a Laplace transform on the Caputo derivative, we have the solution of sub-diffusion equation
		\[
		\hat{u}(x,s)
		=\hat{g}(s)\frac{\sinh(xs^{\alpha})}{\sinh(l_2s^{\alpha})}
		\quad\Longrightarrow\quad
		u(x,t)=\int_{0}^{t}g(t-\tau)\Phi(\tau)d\tau.
		\]
		If $g$ is non-negative, then by the maximum principle \cite{luchko2009maximum}, this IBVP has a non-negative solution $u(x,t)$ for all $x\in[0,l_2]$, $t>0$. It is then straightforward to show that the kernel $\Phi(t) \geq 0$.
		
		To prove $\Psi(t) \geq 0$, we consider the same sub-diffusion equation $D^{2\alpha}_{t}u- \Delta u=0$, $u(x,0)=0$, on the domain $(-l_2,l_2)$ and with boundary conditions $u(-l_2,t)=u(l_2,t)=g(t)$. Using Laplace transform in time we get the solution at $x=l_1$ as:
		\begin{equation*}
			\hat{u}(l_1,s)=
			\hat{g}(s)\frac{\cosh(l_1s^{\alpha})}{\cosh(l_2s^{\alpha})},
		\end{equation*}
		and hence a similar argument as in the first case proves that $\Psi(t)$ is also non-negative.
	\end{enumerate}
\hfill \end{proof}

\begin{lemma} \label{invL_exp}
	For $\alpha \in (0,1)$ and $l,t>0$, the following results hold:
\begin{enumerate}
\item[(i)]  The inverse Laplace transform of $  e^{-l s^{\alpha}}$ is:
\begin{equation*} 
	\mathcal{L}^{-1}\left\{e^{-l s^{\alpha}}\right\} 
    =l\alpha t^{-(\alpha+1)}M_{\alpha}\left(lt^{-\alpha}\right),
\end{equation*} 
where $M_{\alpha}(x), x\in(0,\infty)$ be the M-Wright function.
\item[(ii)] For $\phi \in (0,\pi)$,
\begin{equation*} 
	 \left(\frac{\sin \alpha\phi}{ \sin \phi}\right)^{\alpha/(1-\alpha)} \frac{\sin (1-\alpha)\phi}{\sin \phi} \geq (1-\alpha)\alpha^{\alpha/(1-\alpha)}.
 \end{equation*}
\item[(iii)]
\begin{equation*} 
	\left\|\mathcal{L}^{-1}\left\{e^{-l s^{\alpha}}\right\}\right\|_{L^1(0,t)} \leq \exp\left(-(1-\alpha)\left(\frac{\alpha}{t}\right)^{\alpha/(1-\alpha)}l^{1/(1-\alpha)}\right).
 \end{equation*}  
\end{enumerate}
\end{lemma}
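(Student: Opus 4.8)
For part (i), the plan is to reduce everything to a single known Laplace-transform pair and then scale. For $0<\alpha<1$ the function $t\mapsto \frac{\alpha}{t^{\alpha+1}}M_\alpha(t^{-\alpha})$ is the one-sided $\alpha$-stable density, whose Laplace transform is $e^{-s^\alpha}$ (Mainardi, \cite{mainardi2003wright}). Taking this as the base case, I would write $e^{-ls^\alpha}=e^{-(l^{1/\alpha}s)^\alpha}$ and apply the elementary scaling rule $\mathcal L^{-1}\{H(as)\}(t)=\tfrac1a\,\mathcal L^{-1}\{H\}(t/a)$ with $a=l^{1/\alpha}$ and $H(s)=e^{-s^\alpha}$. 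A short computation then collapses the powers of $l$ to exactly $\frac{l\alpha}{t^{\alpha+1}}M_\alpha(lt^{-\alpha})$, and existence of the inverse transform is immediate once it is identified with this explicit density.

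For part (ii), I would first divide both sides by the right-hand constant to rewrite the claim in the scale-free form
\[
\left(\frac{\sin\alpha\phi}{\alpha\sin\phi}\right)^{\alpha/(1-\alpha)}\frac{\sin(1-\alpha)\phi}{(1-\alpha)\sin\phi}\ge 1.
\]
The key observation is that each factor is separately $\ge 1$: it suffices to establish the one-line inequality $\sin(c\phi)\ge c\sin\phi$ for every $c\in(0,1)$ and $\phi\in(0,\pi)$. This follows from $h(\phi):=\sin(c\phi)-c\sin\phi$ satisfying $h(0)=0$ and $h'(\phi)=c(\cos c\phi-\cos\phi)>0$ on $(0,\pi)$, since $\cos$ is strictly decreasing there and $c\phi<\phi$. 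Applying this with $c=\alpha$ and with $c=1-\alpha$ makes both bracketed ratios at least $1$; as the exponent $\alpha/(1-\alpha)$ is positive, the product stays $\ge 1$. Letting $\phi\to 0^+$ shows the bound is sharp, the right-hand constant being exactly the limiting value.

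For part (iii), I would first use part (i) to note $f(\tau):=\mathcal L^{-1}\{e^{-ls^\alpha}\}(\tau)=\frac{l\alpha}{\tau^{\alpha+1}}M_\alpha(l\tau^{-\alpha})\ge 0$, so that the $L^1(0,t)$ norm is just $\int_0^t f$. The idea is a sharpening of Lemma \ref{SimpleLaplaceLemma}(iv): rather than bounding $\int_0^t f$ by the total mass $\hat f(0)=1$, I insert the weight $e^{s(t-\tau)}\ge 1$, valid for $\tau\in(0,t)$ and $s>0$, to obtain, for every $s>0$,
\[
\int_0^t f(\tau)\,d\tau\le \int_0^t e^{s(t-\tau)}f(\tau)\,d\tau\le e^{st}\int_0^\infty e^{-s\tau}f(\tau)\,d\tau=e^{st}\hat f(s)=e^{\,st-ls^\alpha}.
\]
It then remains to minimize $g(s)=st-ls^\alpha$ over $s>0$; solving $g'(s)=0$ gives $s_\ast=(l\alpha/t)^{1/(1-\alpha)}$, and substituting back collapses to $g(s_\ast)=-(1-\alpha)(\alpha/t)^{\alpha/(1-\alpha)}l^{1/(1-\alpha)}$, precisely the claimed exponent.

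The main obstacle is locating the right auxiliary device in each part rather than any hard estimate. In (ii) it is spotting the factorization that splits the product into two copies of the elementary sine inequality, which reduces the whole statement to a single monotonicity argument. In (iii) it is realizing that the exponential shift $e^{s(t-\tau)}$ upgrades the crude mass bound of Lemma \ref{SimpleLaplaceLemma}(iv) into a Chernoff-type bound whose optimization over $s$ reproduces the exact decay rate $(1-\alpha)\alpha^{\alpha/(1-\alpha)}$; this is the very constant appearing as the sharp value in (ii), reflecting the exponential decay of the M-Wright tail and tying the three statements together. Once these devices are in place, the remaining steps are routine calculus.
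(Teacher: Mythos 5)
Your proposal is correct in all three parts, but part (iii) follows a genuinely different route from the paper. For (i) and (ii) the differences are cosmetic: the paper obtains the transform pair by differentiating $e^{-ls^{\alpha}}$ in $s$ and invoking the Mainardi pair $\mathcal{L}^{-1}\{s^{\alpha-1}e^{-ls^{\alpha}}\}=t^{-\alpha}M_{\alpha}(lt^{-\alpha})$, where you instead scale the one-sided stable density (same content, same external reference); and for (ii) the paper uses exactly your factorization into two ratios, each bounded below by $1$, proving the sine inequality $\sin(c\phi)\ge c\sin\phi$ by concavity/Jensen rather than your monotonicity argument. The real divergence is (iii): the paper substitutes $x=l\tau^{-\alpha}$ to write the $L^1$ norm as $\int_{lt^{-\alpha}}^{\infty}M_{\alpha}(x)\,dx$, inserts the Mikusi\'{n}ski integral representation \eqref{lam_3_3}, evaluates the inner $x$-integral exactly to get $\frac{1}{\pi}\int_0^{\pi}\exp\bigl(-u(\phi)(lt^{-\alpha})^{1/(1-\alpha)}\bigr)\,d\phi$, and only then applies (ii) to bound $u(\phi)$ from below; your proof replaces all of this by a Chernoff-type bound, $\int_0^t f\le e^{st}\hat f(s)=e^{st-ls^{\alpha}}$ for every $s>0$, optimized at $s_{*}=(l\alpha/t)^{1/(1-\alpha)}$, which indeed reproduces the exact exponent (I checked the algebra). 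Your route is shorter, needs nothing about the M-Wright function beyond nonnegativity and the transform value on the positive real axis, and makes (ii) logically unnecessary for (iii); the trade-offs are that your nonnegativity of $f$ is imported from the stable-density identification (the paper derives positivity of $M_{\alpha}$ internally from \eqref{lam_3_3} plus (ii), a fact it reuses repeatedly, e.g.\ in Lemma \ref{invL_cosech} and Theorem \ref{Theorem2}), and that the representation machinery your proof bypasses is needed again later in the paper (e.g.\ the derivative estimate in Lemma \ref{lam_8}), so within the paper's architecture the Mikusi\'{n}ski route does double duty while your argument, taken alone, is the more economical and more generalizable one.
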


\begin{proof}
\begin{enumerate}
\item[(i)]
 We know
\[
\frac{d}{ds}e^{-l s^{\alpha}} = -l \alpha s^{\alpha-1}e^{-l s^{\alpha}}
\]
now taking inverse Laplace transform on both side and then by \cite[p.~11]{mainardi2020wright}, we have
\begin{align*}
	\mathcal{L}^{-1}\left\{\frac{d}{ds}e^{-l s^{\alpha}}\right\} &= -l \alpha \mathcal{L}^{-1}\left(s^{\alpha-1}e^{-l s^{\alpha}}\right) \\
	-t \mathcal{L}^{-1}\left\{e^{-l s^{\alpha}}\right\} &= -l \alpha t^{-\alpha} M_{\alpha}(l t^{-\alpha}).
\end{align*}
 Hence, \[
\mathcal{L}^{-1}\left\{e^{-l s^{\alpha}}\right\}
    =l\alpha t^{-(\alpha+1)} M_{\alpha}(l t^{-\alpha}).
\]
\item[(ii)]
 $\sin(x)$ is a concave function on $[0,\pi]$, so, $-\sin(x)$ is convex function. Using Jensen inequality, we have $-\alpha \sin \phi \geq -\sin \alpha\phi$. Hence, $\left(\frac{\sin \alpha\phi}{\alpha \sin \phi}\right)^{\alpha/(1-\alpha)} \geq 1$. Similarly,$\frac{\sin (1-\alpha)\phi}{(1-\alpha)\sin \phi} \geq 1$. Multiplying both we have our proof.   

\item[(iii)] 
 To prove this lemma, we use the expression of the M-Wright function, first introduced in \cite{Mikusiski1959OnTF} as:
\begin{equation}\label{lam_3_3}
	M_{\alpha}(x) = \frac{x^{\alpha/(1-\alpha)}}{\pi(1-\alpha)}\int_0^{\pi}u(\phi)\exp(-u(\phi)x^{\frac{1}{1-\alpha}}) d\phi,
\end{equation}
where $u(\phi) = \left(\frac{\sin \alpha\phi}{\sin \phi}\right)^{\alpha/(1-\alpha)} \frac{\sin (1-\alpha)\phi}{\sin \phi}$, and  $\phi \in(0,\pi)$.
Using part (ii), we have $M_{\alpha}(x) >0 \,\forall x \in (0,\infty)$; therefore,
\begin{align*}
	\left\|\mathcal{L}^{-1}\left\{e^{-l s^{\alpha}}\right\}\right\|_{L^1(0,t)}
	&=\int_0^t l\alpha \tau^{-(\alpha+1)}M_{\alpha}(l\tau^{-\alpha}) d\tau \\
	&= \int_{lt^{-\alpha}}^{\infty}M_{\alpha}(x)dx \\
	&= \frac{1}{\pi(1-\alpha)}\int_{lt^{-\alpha}}^{\infty}dx \,x^{\frac{\alpha}{1-\alpha}}\int_0^{\pi}u(\phi)\exp(-u(\phi)x^{\frac{1}{1-\alpha}}) \, d\phi \\
	&= \frac{1}{\pi}\int_0^{\pi}d\phi \, \int_{lt^{-\alpha}}^{\infty} \frac{u(\phi)}{1-\alpha}x^{\frac{\alpha}{1-\alpha}}\exp(-u(\phi)x^{\frac{1}{1-\alpha}}) \, dx \\
	& = \frac{1}{\pi}\int_0^{\pi}d\phi \, \exp(-u(\phi)(lt^{-\alpha})^{\frac{1}{1-\alpha}}) \\
	& \leq \frac{1}{\pi}\int_0^{\pi}d\phi \exp\left(-(1-\alpha)\left(\frac{\alpha}{t}\right)^{\alpha/(1-\alpha)}l^{1/(1-\alpha)}\right) \\
	&= \exp\left(-(1-\alpha)\left(\frac{\alpha}{t}\right)^{\alpha/(1-\alpha)}l^{1/(1-\alpha)}\right).
\end{align*}
\end{enumerate}
\hfill\end{proof}

\section{Convergence of DNWR Algorithm}\label{sectionDNWR}
We are now in a position to present the main convergence result for the DNWR algorithm \eqref{DNWR}-\eqref{DNWR2}. For theoretical clarity and algebraic simplicity, we choose the 1D model of the sub-diffusion and diffusion-wave equation. We consider the heterogeneous problem with $\kappa(\boldsymbol{x},t)=\kappa_1$, on $\Omega_{1}=(-a,0)$ and $\kappa(\boldsymbol{x},t)=\kappa_2$, on $\Omega_{2}=(0,b)$. Define $w^{(k)}(t)$ be the error along the interface at $x=0$. The Laplace transform in time converts the error subproblems into the ODEs:
\begin{equation}\label{DNWRL}
	\begin{array}{rcll}
	\begin{cases}
	  (s^{2\nu}-\kappa_1\partial_{xx})\hat{u}_{1}^{(k)} = 0 & \textrm{on $(-a,0)$},\\
	  \hat{u}_{1}^{(k)}(-a,s) = 0,\\
	  \hat{u}_{1}^{(k)}(0,s) = \hat{w}^{(k-1)}(s),
	  \end{cases}
	\end{array}\quad
	\begin{array}{rcll}
	\begin{cases}
	  (s^{2\nu}-\kappa_2\partial_{xx})\hat{u}_{2}^{(k)} = 0 & \textrm{on $(0,b)$},\\
	  \kappa_2\partial_{x}\hat{u}_{2}^{(k)}(0,s) = \kappa_1\partial_{x}\hat{u}_{1}^{(k)}(0,s),\\
	  \hat{u}_{2}^{(k)}(b,s) = 0,
	  \end{cases}
	\end{array}
\end{equation}
followed by the updating step
\begin{equation}\label{DNWRL2}
	\hat{w}^{(k)}(s)=\theta\hat{u}_{2}^{(k)}(0,s)+(1-\theta)\hat{w}^{(k-1)}(s).
\end{equation}
Solving the BVP in Dirichlet and Neumann step in \eqref{DNWRL}, we get
\begin{align}
	\hat{u}_{1}^{(k)}(x,s)&= \frac{\hat{w}^{(k-1)}(s)}{\sinh(a\sqrt{s^{2\nu}/\kappa_1})}\sinh\left(
	  (x+a)\sqrt{s^{2\nu}/\kappa_1}\right), \\
	\hat{u}_{2}^{(k)}(x,s)&= \sqrt{\kappa_1/\kappa_2}\hat{w}^{(k-1)}(s)\frac{\coth(a\sqrt{s^{2\nu}/\kappa_1})}
	   {\cosh(b\sqrt{s^{2\nu}/\kappa_2})}\sinh((x-b)\sqrt{s^{2\nu}/\kappa_2}). \label{DNWR sol1}
\end{align}
Substituting \eqref{DNWR sol1} into \eqref{DNWRL2}, the recurrence relation for $k\in \mathbb{N}$ becomes: 
\begin{align}
	\hat{w}^{(k)}(s) &=\left(
	  1-\theta-\theta\sqrt{\kappa_1/\kappa_2}\tanh(b\sqrt{s^{2\nu}/\kappa_2})\coth(a\sqrt{s^{2\nu}/\kappa_1})\right)^{k}
	\hat{w}^{(0)}(s). \label{DNWR sol2}
\end{align}
Defining $A := a/\sqrt{\kappa_1} \text{ and } B := b/\sqrt{\kappa_2}$, reduce \eqref{DNWR sol2} to
\begin{equation}\label{DNWRLup}
   \hat{w}^{(k)}(s)=\left(1-\theta-\theta\sqrt{\kappa_1/\kappa_2}\tanh(Bs^{\nu})\coth(As^{\nu})\right)^{k}
   \hat{w}^{(0)}(s),\quad k=1,2,3,\ldots
\end{equation}

\begin{theorem}[Convergence of DNWR for $A=B$]
	 When $A = B$ in \eqref{DNWRL}-\eqref{DNWRL2}, the DNWR algorithm for both sub-diffusion and diffusion-wave cases converges linearly for
     $0<\theta<1$, and $\theta\neq 1/(1+\sqrt{\kappa_1/\kappa_2})$. For $\theta=1/(1+\sqrt{\kappa_1/\kappa_2})$, it converges in two iterations; moreover, convergence is independent of the time window size $T$.
\end{theorem}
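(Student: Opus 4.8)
The plan is to exploit the special symmetry $A=B$, which collapses the $s$-dependent amplification factor in \eqref{DNWRLup} to a constant. First I would observe that, away from the isolated zeros of $\sinh(As^{\nu})$ and $\cosh(As^{\nu})$, one has the meromorphic identity $\tanh(As^{\nu})\coth(As^{\nu})\equiv 1$. Substituting $B=A$ into \eqref{DNWRLup} therefore reduces the iteration to
\begin{equation*}
  \hat{w}^{(k)}(s)=\rho^{k}\,\hat{w}^{(0)}(s),\qquad
  \rho:=1-\theta\left(1+\sqrt{\kappa_1/\kappa_2}\right),
\end{equation*}
where the amplification factor $\rho$ is a real scalar, independent of both the Laplace variable $s$ and the window length $T$.

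Next, because $\rho$ is a constant, inverting the Laplace transform is immediate: scaling a transform by a scalar scales its preimage by the same scalar, so $w^{(k)}(t)=\rho^{k}\,w^{(0)}(t)$ holds exactly for every $t\in(0,T)$, with no appeal to the kernel estimates of Section~\ref{Section4}. Consequently, in any norm one has $\|w^{(k)}\|=|\rho|^{k}\|w^{(0)}\|$, so the iteration converges precisely when $|\rho|<1$, and then does so linearly with contraction factor $|\rho|$. The inequality $|1-\theta(1+\sqrt{\kappa_1/\kappa_2})|<1$ is equivalent to $0<\theta<2/(1+\sqrt{\kappa_1/\kappa_2})$, and since $\rho$ carries no $s$- or $T$-dependence, the rate is manifestly independent of the time-window size, as claimed.

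For the distinguished value I would set $\rho=0$, which gives exactly $\theta=1/(1+\sqrt{\kappa_1/\kappa_2})$. Then $\hat{w}^{(1)}(s)=\rho\,\hat{w}^{(0)}(s)=0$, so $w^{(1)}\equiv 0$ and the updated interface trace $h^{(1)}$ coincides with the exact solution $u|_{\Gamma}$. Feeding this exact datum into the Dirichlet--Neumann solves \eqref{DNWRL} of the next sweep reproduces the exact subdomain solutions $u_i$, so the subdomain iterates $u_i^{(k)}$ are exact at $k=2$; this is the sense in which the algorithm ``converges in two iterations.''

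The main point requiring care is really bookkeeping rather than analysis: the $A=B$ symmetry is precisely what eliminates all transcendental $s$-dependence, so there is no genuine analytic obstacle here, and the positivity and $L^1$-decay lemmas of Section~\ref{Section4} are needed only for the unequal case $A\neq B$. I would only be careful to (i) confirm that $|\rho|<1$ holds across the stated parameter window, noting that the optimal root $1/(1+\sqrt{\kappa_1/\kappa_2})$ sits as the midpoint of the convergence interval $\bigl(0,\,2/(1+\sqrt{\kappa_1/\kappa_2})\bigr)$, and (ii) state unambiguously the convention by which $w^{(1)}=0$ is counted as termination in two iterations.
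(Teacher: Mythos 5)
Your proof is correct and follows essentially the same route as the paper's: substituting $A=B$ collapses the amplification factor in \eqref{DNWRLup} to the constant $\rho=1-\theta\left(1+\sqrt{\kappa_1/\kappa_2}\right)$, the inverse Laplace transform is then immediate, giving $w^{(k)}(t)=\rho^{k}w^{(0)}(t)$, and the distinguished value $\theta=1/(1+\sqrt{\kappa_1/\kappa_2})$ makes $\rho=0$, hence two-step convergence, all without invoking the kernel estimates of Section~\ref{Section4}. If anything, your version is slightly sharper than the paper's one-line argument: the paper asserts linear convergence for all $0<\theta<1$, whereas your explicit criterion $|\rho|<1$, i.e.\ $0<\theta<2/(1+\sqrt{\kappa_1/\kappa_2})$, correctly exposes that the admissible window depends on $\kappa_1/\kappa_2$ and need not contain all of $(0,1)$ when $\kappa_1>\kappa_2$.
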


\begin{proof}
	For $A=B$, the equation \eqref{DNWRLup} reduces to
	$\hat{w}^{(k)}(s)=(1-(1+\sqrt{\kappa_1/\kappa_2})\theta)^{k}\hat{w}^{(0)}(s),$ which has the
	back transform $w^{(k)}(t)=(1-(1+\sqrt{\kappa_1/\kappa_2})\theta)^{k}w^{(0)}(t)$.  Thus the
	convergence is linear for $0<\theta<1$, $\theta\neq 1/(1+\sqrt{\kappa_1/\kappa_2})$. If $\theta=1/(1+\sqrt{\kappa_1/\kappa_2})$,
	we have $w^{(1)}(t)=0$, and hence two step convergence.
\hfill\end{proof}

For $A \neq B$, define
\begin{equation}\label{Gdef}
   \hat{f_1}(s):=\tanh(Bs^{\nu})\coth(As^{\nu})-1
    =\frac{\sinh((B-A)s^{\nu})}{\sinh(As^{\nu})\cosh(Bs^{\nu})},
\end{equation}
 and the recurrence relation \eqref{DNWRLup} can be rewritten as
\begin{equation}\label{hrecurrenceLaplace}
	\hat{w}^{(k)}(s)=\left\{\begin{array}{ll}
	\left( (1-(1+\sqrt{\kappa_1/\kappa_2})\theta)-\sqrt{\kappa_1/\kappa_2}\theta \hat{f_1}(s)\right)^{k}\hat{w}^{(0)}(s), & \theta\neq1/(1+\sqrt{\kappa_1/\kappa_2}),\\
	\left(-1\right)^{k}(\sqrt{\kappa_1/\kappa_2}/(1+\sqrt{\kappa_1/\kappa_2}))^k \hat{f_1}^{k}(s)\hat{w}^{(0)}(s), & \theta=1/(1+\sqrt{\kappa_1/\kappa_2}).
	\end{array}\right.
\end{equation}
Note that for $\textrm{Re}(s)>0,$ $\hat{f_1}(s)$
is $\mathcal{O}(s^{-p})$ for every positive $p$, which can be seen as
follows: setting $s=re^{i\theta}$, we obtain for $A \geq B$
the bound
\[
  \left|s^{p}\hat{f_1}(s)\right|\leq\left|\frac{s^{p}}{\cosh(Bs^{\nu})}\right|
  \leq\frac{2r^{p}}{\left|e^{Br^{\nu}cos(\nu\theta)}-e^{-Br^{\nu}cos(\nu\theta)}\right|}\rightarrow0
  \quad\mbox{as $r\rightarrow\infty$},
\]
and for $A < B$, we get the bound
\[
  \left|s^{p}\hat{f_1}(s)\right|\leq\left|\frac{s^{p}}{\sinh(As^{\nu})}
  \right|\leq\frac{2r^{p}}{\left|e^{Ar^{\nu}cos(\nu\theta)}-e^{-Ar^{\nu}cos(\nu\theta)}\right|}\rightarrow0
  \quad\mbox{as $r\rightarrow\infty$}.
\]
Therefore, by \cite[p.~178]{churchill1971operational}, $\hat{f_1}(s)$ is the Laplace transform of an
infinitely differentiable function $f_{1}(t)$. We now define
\begin{equation} \label{Fkdef}
	f_{k}(t):=\mathcal{L}^{-1}\left\{
	\hat{f_1}^{k}(s)\right\} \quad{\rm for}\; k=1,2,\ldots.
\end{equation}
Now we will study the convergence result for the special case $\theta =1/(1+\sqrt{\kappa_1/\kappa_2})$, as it leads to the super-linear convergence, in comparison to other values of $\theta$ which gives linear convergence \cite{mandal2021substructuring}. Therefore, we define $\theta^* :=\sqrt{\kappa_1}/(\sqrt{\kappa_1}+\sqrt{\kappa_2})$. 

Before going to the main theorems for $A \neq B$, we first prove a few lemmas related to the estimates of inverse Laplace transform.

\begin{lemma} \label{invL_cosech}
	For $\alpha \in(0,1), l>0 \textrm{ and }  k \in \mathbb{N}$, the following results hold:
	\begin{enumerate}
		\item[(i)] 
		\begin{align} \label{lam_4_1}
			\mathcal{L}^{-1}\left\{\cosech^{k}(ls^{\alpha})\right\}
			& =  2^{k}{\displaystyle \sum_{m=0}^{\infty}}\binom{m+k-1}{m}\frac{(2m+k)l\alpha}{t^{\alpha+1}}M_{\alpha}\left(\frac{(2m+k)l}{t^{\alpha}}\right)
		\end{align}
		\item[(ii)] 
		\begin{equation*} 
			\| \LL^{-1}\left\{\cosech^{k}(l s^{\alpha})\right\}\|_{L^1(0,t)}
			\leq \left(\frac{2}{1-e^{-A_1B_1}}\right)^k e^{-A_1k^{1/(1-\alpha)}},
		\end{equation*}
		where $A_1 = (1-\alpha)\left(\frac{\alpha}{t}\right)^{\alpha/1-\alpha}l^{1/1-\alpha}$, $c = \left\lfloor\frac{1}{1-\alpha}\right\rfloor$ and $B_1 = [(2+k)^c - k^c]^{1/c(1-\alpha)}$.   
	\end{enumerate}
\end{lemma}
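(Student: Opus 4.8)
The plan is to reduce both parts to the single-exponential results of Lemma~\ref{invL_exp} by expanding $\cosech^{k}$ as a power series in $e^{-2ls^{\alpha}}$. Writing $\cosech(ls^{\alpha}) = 2e^{-ls^{\alpha}}/(1-e^{-2ls^{\alpha}})$ and invoking the generalized binomial series $(1-x)^{-k}=\sum_{m=0}^{\infty}\binom{m+k-1}{m}x^{m}$ with $x=e^{-2ls^{\alpha}}$, I would obtain the identity $\cosech^{k}(ls^{\alpha}) = 2^{k}\sum_{m=0}^{\infty}\binom{m+k-1}{m}e^{-(2m+k)ls^{\alpha}}$, valid for $\mathrm{Re}(s)>0$ since then $|e^{-2ls^{\alpha}}|<1$ and the series converges uniformly on each half-plane $\mathrm{Re}(s)\geq\sigma>0$. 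For part (i) I would then invert term by term, applying Lemma~\ref{invL_exp}(i) with $l$ replaced by $(2m+k)l$ to each exponential; this produces exactly the claimed series \eqref{lam_4_1}.

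The one subtlety in part (i) is the interchange of $\mathcal{L}^{-1}$ with the infinite sum. I would justify it without circularity by checking directly that the series of inverse transforms converges in $L^{1}(0,t)$: each summand $2^{k}\binom{m+k-1}{m}\tfrac{(2m+k)l\alpha}{t^{\alpha+1}}M_{\alpha}((2m+k)l/t^{\alpha})$ is non-negative, by the positivity $M_{\alpha}>0$ following from the integral representation \eqref{lam_3_3} and Lemma~\ref{invL_exp}(ii), and its $L^{1}(0,t)$ norm is summable by the very estimate carried out in part (ii). Hence the inverse-transform series converges in $L^{1}(0,t)$; since the Laplace transform is continuous on $L^{1}$, the transform of its sum is the uniform limit of the partial-sum transforms, namely $\cosech^{k}(ls^{\alpha})$, which legitimizes the termwise inversion.

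For part (ii) I would start from the series in (i). Because every term is non-negative, the $L^{1}(0,t)$ norm of the sum equals the sum of the norms, so $\|\mathcal{L}^{-1}\{\cosech^{k}(ls^{\alpha})\}\|_{L^{1}(0,t)} = 2^{k}\sum_{m=0}^{\infty}\binom{m+k-1}{m}\|\mathcal{L}^{-1}\{e^{-(2m+k)ls^{\alpha}}\}\|_{L^{1}(0,t)}$. Applying Lemma~\ref{invL_exp}(iii) with $l\mapsto(2m+k)l$ bounds the $m$-th norm by $\exp(-A_{1}(2m+k)^{\beta})$, where $\beta:=1/(1-\alpha)$ and $A_{1}$ is as in the statement. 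Factoring $e^{-A_{1}k^{\beta}}$ out of each exponential reduces the proof to the elementary inequality $(2m+k)^{\beta}-k^{\beta}\geq mB_{1}$ for all $m\geq0$; granting this, $\exp(-A_{1}(2m+k)^{\beta})\leq e^{-A_{1}k^{\beta}}(e^{-A_{1}B_{1}})^{m}$, and resumming $\sum_{m}\binom{m+k-1}{m}(e^{-A_{1}B_{1}})^{m}=(1-e^{-A_{1}B_{1}})^{-k}$ yields precisely the stated bound $\big(2/(1-e^{-A_{1}B_{1}})\big)^{k}e^{-A_{1}k^{\beta}}$.

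The heart of the argument, and the step I expect to be the main obstacle, is the inequality $(2m+k)^{\beta}-k^{\beta}\geq mB_{1}$ with $B_{1}=[(2+k)^{c}-k^{c}]^{\beta/c}$ and $c=\floor{\beta}$. I would prove it in three moves. First, the superadditivity $(a+b)^{p}\geq a^{p}+b^{p}$ for $p\geq1$, $a,b\geq0$, applied with $p=\beta/c\geq1$, $a=(2m+k)^{c}-k^{c}$, $b=k^{c}$, gives $(2m+k)^{\beta}-k^{\beta}\geq[(2m+k)^{c}-k^{c}]^{\beta/c}$. Second, since $G(m):=(2m+k)^{c}-k^{c}$ is convex in $m$ with $G(0)=0$, the slope $G(m)/m$ is non-decreasing, so $G(m)\geq mG(1)=m[(2+k)^{c}-k^{c}]$ for integers $m\geq1$. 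Third, raising to the power $\beta/c\geq1$ and using $m^{\beta/c}\geq m$ for $m\geq1$ gives $[(2m+k)^{c}-k^{c}]^{\beta/c}\geq m^{\beta/c}B_{1}\geq mB_{1}$, which chains with the first step to close the inequality, the case $m=0$ being trivial. The only care needed is tracking that $\beta/c\geq1$ throughout, which holds because $c=\floor{\beta}\leq\beta$.
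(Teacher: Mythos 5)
Your proposal is correct and follows essentially the same route as the paper's proof: binomial expansion of $\cosech^{k}(ls^{\alpha})$, termwise inversion via Lemma~\ref{invL_exp}(i), the termwise $L^{1}$ bound from Lemma~\ref{invL_exp}(iii), the key inequality $(2m+k)^{1/(1-\alpha)}\geq k^{1/(1-\alpha)}+mB_{1}$ obtained by the same superadditivity step (your convexity argument for $G(m)=(2m+k)^{c}-k^{c}\geq mG(1)$ is a clean substitute for the paper's term-by-term binomial comparison), and resummation of the binomial series. One technical caveat in part (i): your justification of the interchange invokes ``continuity of the Laplace transform on $L^{1}$,'' but your series converges only in $L^{1}(0,t)$ for finite $t$ (the $L^{1}(0,\infty)$ norms of the terms are $2^{k}\binom{m+k-1}{m}$, which sum to infinity), and $L^{1}_{\mathrm{loc}}$ convergence alone does not control the tail of the transform integral; since all terms are non-negative, the correct one-line repair is monotone convergence applied to $\int_{0}^{\infty}e^{-st}\sum_{m}(\cdot)\,dt$ for real $s>0$, whose value $\cosech^{k}(ls^{\alpha})$ is finite by your own series identity --- this is precisely the Fubini--Tonelli verification the paper carries out on the transform side with the bound $e^{-(2m+k)ls_{0}^{\alpha}}$ for $\mathrm{Re}(s)\geq s_{0}>0$.
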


\begin{proof}
	\begin{enumerate}
		\item[(i)]
		Taking the Laplace transform on both sides of \eqref{lam_4_1} and using the definition, we have
		\begin{equation*}
			\cosech^k(ls^{\alpha}) = 2^k\int_0^\infty e^{-st}\sum_{m=0}^\infty {m+k-1 \choose m}
			\frac{(2m+k)l\alpha}{t^{\alpha+1}}M_{\alpha}\left(\frac{(2m+k)l}{t^{\alpha}}\right)\,dt.
		\end{equation*}
		We can interchange the sum and the integral provided the conditions of Fubini's theorem holds i.e., if
		\[
		\sum_{m=0}^\infty {m+k-1 \choose m}\int_0^\infty \left|e^{-st}\frac{(2m+k)l\alpha}{t^{\alpha+1}}M_{\alpha}\left(\frac{(2m+k)l}{t^{\alpha}}\right)\right|\,dt < \infty.
		\]
		From part (i) of Lemma \ref{invL_exp}, we know that
		\begin{equation*}
			\mathcal{L}^{-1}\left\{e^{-\lambda s^{\alpha}}\right\}
			=\frac{\lambda\alpha}{t^{\alpha+1}}M_{\alpha}\left(\frac{\lambda}{t^{\alpha}}\right),
			\quad\lambda > 0.
		\end{equation*}
		So for $\mbox{Re}(s) \geq s_0 > 0$, we have
		\begin{align*}
			\int_0^\infty \left|e^{-st}\frac{(2m+k)l\alpha}{t^{\alpha+1}}M_{\alpha}\left(\frac{(2m+k)l}{t^{\alpha}}\right)\right|\,dt
			&= \exp(-(2m+k)l\sqrt{\mathrm{Re}(s^{2\alpha})}) \\ 
			&\leq \exp(-(2m+k)l s_0^{\alpha}).
		\end{align*}
		Thus, using the binomial series
		\begin{equation*}\label{binomial}
			\frac{1}{(1-z)^{k}}=\sum_{m\geq0}\binom{m+k-1}{m}z^{m}
			\qquad\mbox{for $|z|<1$}
		\end{equation*}
		with $z = e^{-2ls_0^{\alpha}} < 1$, we get
		\begin{align*}
			&\sum_{m=0}^\infty {m+k-1 \choose m}\int_0^\infty \left|e^{-st}\frac{(2m+k)l\alpha}{t^{\alpha+1}}M_{\alpha}\left(\frac{(2m+k)l}{t^{\alpha}}\right)\right|\,dt\\
			&\qquad\qquad\leq \frac{e^{-kls_0^{\alpha}}}{(1-e^{-2l s_0^{\alpha}})^k}=\frac{\cosech^k(l s_0^{\alpha})}{2^k} <\infty.
		\end{align*}
		Therefore, term-by-term integration is possible, and hence the result.
		
		\item[(ii)]
		Let, $c = \left\lfloor\frac{1}{1-\alpha}\right\rfloor$, i.e. $1\leq \frac{1}{c(1-\alpha)}$,
		and
		\begin{align}
			(2m+k)^{1/1-\alpha} &= \left[(2m+k)^c\right]^{1/c(1-\alpha)} \nonumber\\
			&= \left[k^c + \sum_{p=0}^{c-1}\binom{c}{p} (2m)^{c-p} k^p\right]^{1/c(1-\alpha)} \nonumber\\
			&\geq k^{1/1-\alpha} + \left[\sum_{p=0}^{c-1}\binom{c}{p} (2m)^{c-p} k^p\right]^{1/c(1-\alpha)} \nonumber\\
			&\geq k^{1/1-\alpha} + mB_1. \label{lam_4_2}
		\end{align}
		Using part (iii) of Lemma \ref{invL_exp} in \eqref{lam_4_1} leads to
		\begin{align}
			&\| \LL^{-1}\left\{\cosech^{k}(l s^{\alpha})\right\}\|_{L^1(0,t)} \nonumber\\
			& =  2^{k}\int_0^t \left|{\displaystyle \sum_{m=0}^{\infty}}\binom{m+k-1}{m}\frac{(2m+k)l\alpha}{\tau^{\alpha+1}}M_{\alpha}\left(\frac{(2m+k)l}{\tau^{\alpha}}\right)\right|d\tau \nonumber\\
			& = 2^{k}{\displaystyle \sum_{m=0}^{\infty}}\binom{m+k-1}{m}\int_0^t \frac{(2m+k)l\alpha}{\tau^{\alpha+1}}M_{\alpha}\left(\frac{(2m+k)l}{\tau^{\alpha}}\right)d\tau \nonumber\\
			& \leq  2^{k}\sum_{m=0}^{\infty}\binom{m+k-1}{m}
			\exp\left(-A_1(2m+k)^{\frac{1}{1-\alpha}}\right). \label{lam_4_3}
		\end{align}
		Finally, using \eqref{lam_4_2} in \eqref{lam_4_3}, we have our result:
		\begin{align*}
			\| \LL^{-1}\left\{\cosech^{k}(l s^{\alpha})\right\}\|_{L^1(0,t)} & \leq  2^{k}e^{-A_1k^{\frac{1}{1-\alpha}}}\sum_{m=0}^{\infty}\binom{m+k-1}{m}
			\exp(-mA_1B_1)\\
			& \leq  \left(\frac{2}{1-e^{-A_1B_1}}\right)^{k}
			e^{-A_1k^{\frac{1}{1-\alpha}}}.
		\end{align*}
	\end{enumerate}
	\hfill\end{proof}

\begin{lemma} \label{invL_sinh_sinh}
	For $ 0<l_1<l_2 \textrm{ and } k \in \mathbb{N}$, we have
	\begin{enumerate}
		\item[(i)] 
		\begin{multline*} 
			\mathcal{L}^{-1}\left\{\frac{\sinh^{k}((l_2-l_1)s^{\alpha})}{\sinh^{k}(l_2s^{\alpha})}\right\} \\
			= {\displaystyle \sum_{j=0}^{k}}(-1)^j\binom{k}{j} {\displaystyle \sum_{m=0}^{\infty}} \binom{m+k-1}{m}\frac{(2ml_2+kl_1+2j(l_2-l_1))\alpha}{t^{\alpha+1}}M_{\alpha}\left(\frac{(2ml_2+kl_1+2j(l_2-l_1))}{t^{\alpha}}\right),
		\end{multline*}
		\item[(ii)] 
		\begin{equation*} 
			\left\|\mathcal{L}^{-1}\left\{\frac{\sinh^{k}((l_2-l_1)s^{\alpha})}{\sinh^{k}(l_2s^{\alpha})}\right\}\right\|_{L^1(0,t)}
			\leq \left(\frac{1+e^{-A_2}}{1-e^{-B_2C_2}}\right)^k e^{-B_2k^{1/(1-\alpha)}},
		\end{equation*}
		where $B_2 = (1-\alpha)\left(\frac{\alpha}{t}\right)^{\alpha/1-\alpha}l_1^{1/1-\alpha}$, $c = \left\lfloor\frac{1}{1-\alpha}\right\rfloor$, $C_2 = \left[(2l_2/l_1 +k)^c - k^c\right]^{1/c(1-\alpha)}$ and $A_2 =  (1-\alpha)\left(\frac{\alpha}{t}\right)^{\alpha/1-\alpha}(2l_2-2l_1)^{1/1-\alpha}$.
	\end{enumerate}
\end{lemma}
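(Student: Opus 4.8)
The plan is to mirror the two-part structure of Lemma \ref{invL_cosech}: in part (i) I would derive an explicit series by expanding both factors into exponentials $e^{-\lambda s^{\alpha}}$ and inverting term by term, and in part (ii) I would estimate the resulting $L^1$ norm summand by summand.

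For part (i), expand the numerator by the binomial theorem, $\sinh^{k}((l_2-l_1)s^{\alpha}) = 2^{-k}\sum_{j=0}^{k}(-1)^j\binom{k}{j}e^{(k-2j)(l_2-l_1)s^{\alpha}}$, and the denominator exactly as in Lemma \ref{invL_cosech}(i) via the geometric series with $z=e^{-2l_2 s^{\alpha}}$, giving $\cosech^{k}(l_2 s^{\alpha}) = 2^{k}\sum_{m=0}^{\infty}\binom{m+k-1}{m}e^{-(2m+k)l_2 s^{\alpha}}$. Multiplying the two expansions and collecting exponents, a short computation shows the combined exponent equals $-(2ml_2+kl_1+2j(l_2-l_1))s^{\alpha}$, so each term has the form $e^{-\lambda s^{\alpha}}$ with $\lambda = 2ml_2+kl_1+2j(l_2-l_1) > 0$. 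Inverting term by term using Lemma \ref{invL_exp}(i) then produces the stated double sum. The interchange of the infinite $m$-sum with the inverse transform requires a Fubini justification identical to Lemma \ref{invL_cosech}(i): bound each $\int_0^{\infty}|e^{-st}\mathcal{L}^{-1}\{e^{-\lambda s^{\alpha}}\}|\,dt$ by $e^{-\lambda s_0^{\alpha}}$ for $\mathrm{Re}(s)\geq s_0>0$ and sum the resulting (finite $j$-sum times convergent binomial $m$-series).

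For part (ii), I would start from (i), apply the triangle inequality to discard the sign $(-1)^j$, and bound each $\|\mathcal{L}^{-1}\{e^{-\lambda s^{\alpha}}\}\|_{L^1(0,t)}$ by $\exp(-A_1'\lambda^{1/(1-\alpha)})$ through Lemma \ref{invL_exp}(iii), writing $A_1' := (1-\alpha)(\alpha/t)^{\alpha/(1-\alpha)}$ so that $B_2 = A_1' l_1^{1/(1-\alpha)}$ and $A_2 = A_1'(2l_2-2l_1)^{1/(1-\alpha)}$. The crux is a lower bound for $\lambda^{1/(1-\alpha)}$. Setting $p := 1/(1-\alpha)\geq 1$ and splitting $\lambda = (kl_1+2ml_2)+2j(l_2-l_1)$, the superadditivity $(a+b)^p\geq a^p+b^p$ makes the $j$-contribution at least $A_1'(2j(l_2-l_1))^p = j^p A_2 \geq jA_2$, whence $\sum_{j=0}^{k}\binom{k}{j}e^{-jA_2}=(1+e^{-A_2})^k$ supplies the numerator of the claimed bound.

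For the remaining factor $(kl_1+2ml_2)^p = l_1^p(k+(2l_2/l_1)m)^p$, I would reproduce the factoring step \eqref{lam_4_2} of Lemma \ref{invL_cosech} with $2$ replaced by $2l_2/l_1$: expand $(k+(2l_2/l_1)m)^c$ binomially with $c=\lfloor p\rfloor$ and apply superadditivity with exponent $p/c\geq 1$ to obtain $(kl_1+2ml_2)^p \geq l_1^p k^p + l_1^p C_2 m$, hence $A_1'(kl_1+2ml_2)^p \geq B_2 k^p + B_2 C_2 m$. Summing $\sum_{m}\binom{m+k-1}{m}e^{-B_2C_2 m} = (1-e^{-B_2C_2})^{-k}$ then yields the denominator and the factor $e^{-B_2 k^{1/(1-\alpha)}}$, completing the estimate. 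I expect the main obstacle to be the clean superadditive separation of the three contributions to $\lambda^{1/(1-\alpha)}$ — in particular verifying $j^p\geq j$ and adapting the $(2+k)^c-k^c$ trick to $(2l_2/l_1+k)^c-k^c$ — while checking that the $m=0$ and $j=0$ boundary terms remain consistent (with equality) in the superadditive inequalities.
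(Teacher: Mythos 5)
Your proposal is correct and is exactly the argument the paper intends: the paper omits this proof with the remark that it is ``similar to Lemma \ref{invL_cosech}'', and your write-up is precisely that adaptation --- binomial expansion of $\sinh^k((l_2-l_1)s^{\alpha})$, geometric/binomial series for $\cosech^k(l_2 s^{\alpha})$, term-by-term inversion via Lemma \ref{invL_exp}(i) with the Fubini check, then the $L^1$ estimate via Lemma \ref{invL_exp}(iii) together with the superadditivity splitting that produces the $(1+e^{-A_2})^k$ factor from the finite $j$-sum and the $(1-e^{-B_2C_2})^{-k}$ factor from the $m$-series. In particular, your verification that the combined exponent is $-(2ml_2+kl_1+2j(l_2-l_1))s^{\alpha}$ and your adaptation of the paper's inequality \eqref{lam_4_2} with $2$ replaced by $2l_2/l_1$ are both sound, so the proposal fills in the omitted details correctly.
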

\begin{proof}
	These proofs are similar to Lemma \ref*{invL_cosech}, hence omitted.
	\hfill\end{proof}

\begin{theorem}[Convergence of DNWR for $\nu\leq 1/2$]\label{Theorem2}
In sub-diffusion and diffusion case the interface error $w^{(k)}(t)$ from the error equation \eqref{DNWRL}-\eqref{DNWRL2} follows the following estimates:
 \begin{enumerate}
 	\item[(i)] for $A > B$,
 	\[
 	\left\| w^{(k)}\right\|_{L^{\infty}(0,T)}\leq
 	\left(2\theta^{*}\frac{A-B}{A}\right)^{k}\exp\left(-\mu_1 k^{1/(1-\nu)}\right)\left\| w^{(0)}\right\|_{L^{\infty}(0,T)},
 	\]
 	where, $\mu_1 = (1-\nu)\nu^{\nu/(1-\nu)}(B/T^{\nu})^{1/(1-\nu)}$,
 	\item[(ii)] for $A < B$,
 	\[
 	\left\| w^{(2k)}\right\|_{L^{\infty}(0,T)}\leq \left(\frac{2\sqrt{2}\theta^{*}}{1-\exp(-2\mu_1)}\right)^{2k}\exp\left(-\mu_1 (2k)^{1/(1-\nu)}\right)\left\| w^{(0)}\right\|_{L^{\infty}(0,T)}.
 	\]
 \end{enumerate}
\end{theorem}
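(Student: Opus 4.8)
The plan is to specialise to the optimal relaxation parameter $\theta=\theta^{*}=\sqrt{\kappa_1}/(\sqrt{\kappa_1}+\sqrt{\kappa_2})$, for which the second branch of the recurrence \eqref{hrecurrenceLaplace} collapses to $\hat w^{(k)}(s)=(-1)^{k}(\theta^{*})^{k}\hat f_1^{k}(s)\hat w^{(0)}(s)$, since $\sqrt{\kappa_1/\kappa_2}/(1+\sqrt{\kappa_1/\kappa_2})=\theta^{*}$. Inverting and using the kernel $f_k$ from \eqref{Fkdef} gives $w^{(k)}(t)=(-1)^{k}(\theta^{*})^{k}(f_k*w^{(0)})(t)$, so by commutativity of convolution and Lemma \ref{SimpleLaplaceLemma}(iii) we get $\|w^{(k)}\|_{L^{\infty}(0,T)}\leq(\theta^{*})^{k}\|f_k\|_{L^{1}(0,T)}\|w^{(0)}\|_{L^{\infty}(0,T)}$. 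The theorem thus reduces to an $L^{1}$ estimate of $f_k=\mathcal{L}^{-1}\{\hat f_1^{k}\}$, where by \eqref{Gdef} we have $\hat f_1^{k}(s)=(-1)^{k}\sinh^{k}((A-B)s^{\nu})/(\sinh^{k}(As^{\nu})\cosh^{k}(Bs^{\nu}))$. The two regimes $A>B$ and $A<B$ are handled by different factorisations of this transform.

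For part (i), $A>B$, I would factor $\hat f_1^{k}=(-1)^{k}\,[\sinh^{k}((A-B)s^{\nu})/\sinh^{k}(As^{\nu})]\cdot\sech^{k}(Bs^{\nu})$ and bound $\|f_k\|_{L^1}$ by the product of the two $L^{1}$ norms via Lemma \ref{SimpleLaplaceLemma}(ii). For the first factor I would exploit positivity: since $0<A-B<A$ and $\nu\leq 1/2$, Lemma \ref{PositivityLemma}(ii) makes $\mathcal{L}^{-1}\{\sinh((A-B)s^{\nu})/\sinh(As^{\nu})\}$ non-negative, hence so is its $k$-fold convolution by Lemma \ref{SimpleLaplaceLemma}(i); Lemma \ref{SimpleLaplaceLemma}(iv) then bounds its $L^{1}$ norm by $\lim_{s\to0^{+}}(\sinh((A-B)s^{\nu})/\sinh(As^{\nu}))^{k}=((A-B)/A)^{k}$, which supplies the geometric prefactor. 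For the second factor I would expand $\sech^{k}(Bs^{\nu})=2^{k}\sum_{m\geq0}\binom{m+k-1}{m}(-1)^{m}e^{-(2m+k)Bs^{\nu}}$, bound each term by Lemma \ref{invL_exp}(iii), and extract $e^{-\mu_1 k^{1/(1-\nu)}}$ using the superadditivity estimate $(2m+k)^{1/(1-\nu)}\geq k^{1/(1-\nu)}+mB_1$ exactly as in Lemma \ref{invL_cosech}(ii) (the $\sech$ and $\cosech$ kernels give the same $L^{1}$ bound after the triangle inequality, the alternating sign in the $\sech$ series being what sharpens the prefactor to $2^{k}$). Multiplying the two contributions yields the stated bound with $\mu_1=(1-\nu)\nu^{\nu/(1-\nu)}(B/T^{\nu})^{1/(1-\nu)}$.

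For part (ii), $A<B$, I would pass to even iterates and estimate $\|f_{2k}\|_{L^{1}}=\|\mathcal{L}^{-1}\{\hat f_1^{2k}\}\|_{L^{1}}$, which accounts for the $2k$ and the doubled rate $2\mu_1$ in the prefactor. The obstruction here is that the gap $B-A$ need not be smaller than $A$, so the clean positivity step of part (i) for $\sinh((A-B)s^{\nu})/\sinh(As^{\nu})$ is unavailable. I would instead keep $\sech^{2k}(Bs^{\nu})$ as the carrier of the superlinear decay (again via Lemmas \ref{invL_exp}(iii) and \ref{invL_cosech}(ii)) and expand the squared numerator $\sinh^{2k}((B-A)s^{\nu})$ through its exponentials, so that the remaining ratio is controlled by the $\sinh/\sinh$ estimate of Lemma \ref{invL_sinh_sinh}(ii); the pairing of iterations produces the factor $\sqrt2$ and, from the non-alternating geometric bound, the constant $1/(1-e^{-2\mu_1})$, while Lemma \ref{SimpleLaplaceLemma}(ii) recombines the pieces.

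The main obstacle is isolating the superlinear factor $e^{-\mu_1 k^{1/(1-\nu)}}$ with the correct constant: the elementary positivity argument (Lemma \ref{SimpleLaplaceLemma}(iv)) alone yields only the geometric decay $((A-B)/A)^{k}$, so the superlinear rate must come from the sharp M-Wright tail bound of Lemma \ref{invL_exp}(iii) together with the superadditivity inequality of Lemma \ref{invL_cosech}. Coordinating these two decay mechanisms—geometric from the bounded ratio and superlinear from the $\sech/\cosech$ kernel—while tracking the precise prefactors, and adapting the positivity route of part (i) to part (ii) where $B-A\not<A$, are the delicate points. Throughout, the restriction $\nu\leq 1/2$ is essential and enters only through the non-negativity guaranteed by Lemma \ref{PositivityLemma}(ii).
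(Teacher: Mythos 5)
Your overall skeleton is the same as the paper's: reduce to an $L^1$ bound on $f_k$, factor the transform, control the $\sinh/\sinh$ ratio by positivity plus the $s\to0^+$ limit, and extract the superlinear decay from the M-Wright tail bound. Your part (i) treatment of the first factor is correct. But your mechanism for the $\sech$ factor does not deliver the claimed constant: once you expand $\sech^k(Bs^\nu)$ in its binomial series and ``bound each term'' by Lemma \ref{invL_exp}(iii), you have applied the triangle inequality and the alternating signs are gone, so what you obtain is exactly the $\cosech$-type bound $\left(2/(1-e^{-\mu_1\beta})\right)^k e^{-\mu_1 k^{1/(1-\nu)}}$ of Lemma \ref{invL_cosech}(ii), whose prefactor is strictly larger than $2^k$; an alternating series cannot ``sharpen'' anything after absolute values have been taken. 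The paper gets $2^k$ by a comparison argument instead: writing $2^k e^{-kBs^\nu}-\sech^k(Bs^\nu)=\sum_{j\geq 1}\binom{k}{j}e^{-2jBs^\nu}\sech^k(Bs^\nu)$, each summand is the transform of a convolution of non-negative kernels (non-negativity of $\mathcal{L}^{-1}\{\sech^k(Bs^\nu)\}$ is where $\nu\leq 1/2$ enters again), so $0\leq \mathcal{L}^{-1}\{\sech^k(Bs^\nu)\}\leq 2^k\mathcal{L}^{-1}\{e^{-kBs^\nu}\}$ pointwise, and Lemma \ref{invL_exp}(iii) applied to $e^{-kBs^\nu}$ gives $\|\mathcal{L}^{-1}\{\sech^k(Bs^\nu)\}\|_{L^1(0,T)}\leq 2^k e^{-\mu_1 k^{1/(1-\nu)}}$. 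Without this step your part (i) proves a strictly weaker estimate than the one stated.

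The gap in part (ii) is more serious. You propose to control $\sinh^{2k}((B-A)s^\nu)/\sinh^{2k}(As^\nu)$ by Lemma \ref{invL_sinh_sinh}(ii), but that lemma requires the numerator length to be smaller than the denominator length, i.e.\ $B-A<A$, which fails whenever $B\geq 2A$; in that regime the function grows like $e^{2k(B-2A)s^\nu}$ as $\mathrm{Re}(s)\to\infty$ and has no integrable inverse transform at all, so no rearrangement of series rescues this decomposition. The paper's route is different: from the two-step recurrence for $\hat{v}_s^{(2k)}:=\sinh^{2k}(As^\nu)\hat{w}^{(2k)}$, the per-double-step kernel is $\hat{g}_c^2$ with $\hat{g}_c=\sinh((B-A)s^\nu)/\cosh(Bs^\nu)$, and the identity $\sinh^2=\cosh^2-1$ gives $\hat{g}_c^2=\cosh^2((B-A)s^\nu)/\cosh^2(Bs^\nu)-1/\cosh^2(Bs^\nu)$; since $B-A<B$, both terms have non-negative inverse transforms for $\nu\leq 1/2$ with $s\to0^+$ limits equal to $1$, whence $\|\mathcal{L}^{-1}\{\hat{g}_c^2\}\|_{L^1(0,T)}\leq 2$. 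That constant $2$ per double step is the actual source of the $\sqrt2$, via $(2(\theta^*)^2)^k=(\sqrt2\,\theta^*)^{2k}$, and the entire superlinear decay is then carried by $\cosech^{2k}(As^\nu)$ through Lemma \ref{invL_cosech}(ii) --- not by $\sech^{2k}(Bs^\nu)$ as you propose. In particular the rate that any admissible factorization produces here is anchored at the shorter length $A=\min(A,B)$; your plan to keep the decay in the $\sech^{2k}(Bs^\nu)$ factor while absorbing $1/\sinh^{2k}(As^\nu)$ into a bounded ratio cannot be carried out when $B\geq 2A$.
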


\begin{proof}
\begin{enumerate}
	\item[(i)]
	We define $\hat{v}_c^{(k)}(s) := \cosh^k(Bs^{\nu})\hat{w}^{(k)}(s)$. For $k \in \mathbb{N}$, and $\theta=\theta^*$ in \eqref{hrecurrenceLaplace}, $\hat{v}_c^{(k)}(s)$ follows the recurrence relation:
\begin{equation}
	\hat{v}_c^{(k)}(s) = -\theta^{*}\hat{g}_s(s)\hat{v}_c^{(k-1)}(s),
\end{equation} 
	where $\hat{g}_s(s) = \frac{\sinh((B-A)s^{\nu})}{\sinh(As^{\nu})}$. We know from part (i) of Lemma \ref{PositivityLemma} that the inverse Laplace transform $ g_s(t) := \mathcal{L}^{-1}\{\hat{g}_s(s)\}$ exists, so does the inverse Laplace transform of  $\hat{v}_c^{(k)}(s)$. Now using part (iii) of Lemma \ref{SimpleLaplaceLemma} in the above recurrence relation, we have
\begin{align}\label{hkestimate}
	\left\| v^{(k)}_c(.)\right\|_{L^{\infty}(0,T)}
	&=\theta^{*}\left\|(g_s*v_c^{(k-1)})(.)\right\|_{L^{\infty}(0,T)}\\
	&\leq\theta^{*}\left\| g_s(.)\right\|_{L^1(0,T)}\left\| v_c^{(k-1)}(.)\right\|_{L^{\infty}(0,T)} \nonumber \\
	&\leq\left(\theta^{*}\left\| g_s(.)\right\|_{L^1(0,T)}\right)^k\left\| v_c^{(0)}(.)\right\|_{L^{\infty}(0,T)}.\nonumber
\end{align}
Using the positivity of $g_s(t)$ from part (ii) of Lemma \ref{PositivityLemma}, $L^{1}$-integrable from part (ii) of Lemma \ref{invL_sinh_sinh} and finally, from part (iv) of Lemma \ref{SimpleLaplaceLemma}, we have
\begin{equation} 
	\| g_s(.)\|_{L^1(0,T)}
	= \int_0^T |g_s(\tau)|\,d\tau
	\leq \lim_{s\rightarrow0+}(-1)\hat{g}_s(s)
	=\left(\frac{A-B}{A}\right). \label{proofpart2}
\end{equation}
Now,
\begin{align*}
	\hat{w}^{(k)}(s) &= \frac{1}{\cosh^k(Bs^{\nu})}\hat{v}_c^{(k)}(s)\\
	w^{(k)}(t) &= \mathcal{L}^{-1}\left( \frac{1}{\cosh^k(Bs^{\nu})}\right)*v_c^{(k)}(t). 
\end{align*}
Using part (iii) of Lemma \ref{SimpleLaplaceLemma}, we have
\begin{equation}
	\left\|w^{(k)}(.)\right\|_{L^1(0,T)} = \left\|\mathcal{L}^{-1}\left( \frac{1}{\cosh^k(Bs^{\nu})}\right)\right\|_{L^1(0,T)} \left\|v_c^{(k)}(.)\right\|_{L^1(0,T)}. 
\end{equation}
To prove further we first show $\mathcal{L}^{-1}\left\{2^k e^{-kBs^{\nu}} - \sech^k(Bs^{\nu}) \right\} \geq 0$, which is as follows:
\begin{align*}
	2^k e^{-kBs^{\nu}} - \sech^k(Bs^{\nu}) &= 2^k \frac{(1+e^{-2Bs^{\nu}})^k-1}{(e^{Bs^{\nu}}+e^{-Bs^{\nu}})},\\
	&= \sum_{j=0}^{k} \binom{k}{j}e^{-2jBs^{\nu}}\sech^k(Bs^{\nu}).
\end{align*}
We know from part(i) of Lemma \ref{invL_exp} that $\mathcal{L}^{-1}\left\{e^{-2jBs^{\nu}}\right\} \geq 0$ for all $j$ and from Lemma \ref{PositivityLemma} that $\mathcal{L}^{-1}\left\{\sech^k(Bs^{\nu})\right\} \geq 0$. Hence, their convolution is also positive.
Therefore, we have
\begin{align}
	\left\| \mathcal{L}^{-1}\left( \frac{1}{\cosh^k(Bs^{\nu})}\right)\right\|_{L^1(0,T)} 
	&\leq \left\| \mathcal{L}^{-1}\left( \frac{2^k}{\exp(kBs^{\nu})}\right)\right\|_{L^1(0,T)} \nonumber\\
	&\leq 2^k\exp\left(-\mu_1 k^{1/(1-\nu)}\right), \label{proofpart3}
\end{align}
where $\mu = (1-\nu)\nu^{\nu/(1-\nu)}(B/T^{\nu})^{1/(1-\nu)}$.
Hence, we have the super-linear estimate by combining \eqref{hkestimate}-\eqref{proofpart3}
\begin{equation*}
	\left\| w^{(k)}\right\|_{L^{\infty}(0,T)} \leq
	\left(2\theta^{*}\frac{A-B}{A}\right)^{k}\exp\left(-\mu_1 k^{1/(1-\nu)}\right)\left\| w^{(0)}\right\|_{L^{\infty}(0,T)}.
\end{equation*}
	
\item[(ii)]
For $B>A$ consider the iteration,
	\[
	\hat{w}^{(2k)}(s) = (\theta^{*})^2 \frac{\sinh^2((B-A)s^{\nu})}{\sinh^2(As^{\nu})\cosh^2(Bs^{\nu})}\hat{w}^{(2k-2)}(s).
	\]
Now we define the notation $\hat{v}^{(k)}_s(s) := \sinh^k(As^{\nu})\hat{w}^{(k)}(s)$. Using this, we rewrites:
	\[
	\hat{v}^{(2k)}_s(s) = (\theta^{*})^2\hat{g}_c^2(s)\hat{v}^{(2k-2)}_s(s),
	\] 
where $\hat{g}_c(s) = \frac{\sinh((B-A)s^{\nu})}{\cosh(Bs^{\nu})}$.
Denoting the inverse transform of $\hat{g}^2_c(s)$ by $f_c(t)$, we get
\begin{equation} \label{proofpart2_1}
	\left\| v_s^{(2k)}(.)\right\|_{L^{\infty}(0,T)}
	\leq (\theta^{*})^2\left\| f_c(.)\right\|_{L^1(0,T)}\left\| v_s^{(2k-2)}(.)\right\|_{L^{\infty}(0,T)}.
\end{equation}
Applying the hyperbolic identity and triangular inequality of norm we obtain:
\begin{align*}
	\mathcal{L}^{-1}\left\{\hat{g}_c^2(s)\right\} &= \mathcal{L}^{-1}\left\{\frac{\cosh^2((B-A)s^{\nu})}{\cosh^2(Bs^{\nu})}- \frac{1}{\cosh^2(Bs^{\nu})}\right\}, \\
	\left\| f_c(.)\right\|_{L^1(0,T)} &\leq  \left\| \mathcal{L}^{-1}\left\{\frac{\cosh^2((B-A)s^{\nu})}{\cosh^2(Bs^{\nu})}\right\}\right\|_{L^1(0,T)} + \left\| \left\{\frac{1}{\cosh^2(Bs^{\nu})}\right\}\right\|_{L^1(0,T)}.
\end{align*}
Then using part (v) of Lemma \ref{SimpleLaplaceLemma} yields
\begin{equation}\label{proofpart2_2}
	\| f_c(.)\|_{L^1(0,T)} \leq 2.
\end{equation}
Therefore, using part (ii) of Lemma \ref{invL_cosech}, we have:
\begin{equation}\label{proofpart2_3}
	\left\| \mathcal{L}^{-1}\left( \frac{1}{\sinh^{2k}(As^{\nu})}\right)\right\|_{L^1(0,T)} 
	\leq \left(\frac{2}{1-\exp(-2\mu_1)}\right)^{2k}\exp\left(-\mu_1 (2k)^{1/(1-\nu)}\right).
\end{equation} 
Finally combining \eqref{proofpart2_1}-\eqref{proofpart2_3}, we get
\[
	\left\|w^{(2k)}\right\|_{L^\infty(0,T)} \leq  \left(\frac{2\sqrt{2}\theta^{*}}{1-\exp(-2\mu_1)}\right)^{2k}\exp\left(-\mu_1 (2k)^{1/(1-\nu)}\right)\left\|w^{(0)}\right\|_{L^\infty(0,T)}.
\]
\end{enumerate}
\hfill\end{proof}

\begin{theorem}[Convergence of DNWR for $\nu>1/2$]\label{Theorem3}
For diffusion-wave case, the DNWR algorithm converges for $\theta^{*} = 1/(1+\sqrt{\kappa_1/\kappa_2})$ superlinearly with the estimate:
\begin{equation}
	\| w^{(k)}\|_{L^{\infty}(0,T)}\leq
	\left(\frac{2\theta^{*}(1+\exp(-\delta))}{(1-\exp(-\mu_2\beta_1))(1-\exp(-\mu_2\beta_2))}\right)^{k}\exp\left(-2\mu_2 k^{1/(1-\nu)}\right)\| w^{(0)}\|_{L^{\infty}(0,T)},
\end{equation}
where $ c = \floor*{\frac{1}{1-\nu}}, C = \max(A,B), D = \min(A,B) $, $\beta_1 = ((2C/D + k)^c - k^c)^{1/c(1-\nu)},$ $\beta_2 = ((2 + k)^c - k^c)^{1/c(1-\nu)}$, $\mu_2 = (1-\nu)\nu^{\nu/(1-\nu)}(D/t^{\nu})^{1/(1-\nu)}$ and  $\delta = (1-\nu)\nu^{\nu/(1-\nu)}(2|A-B|/t^{\nu})^{1/(1-\nu)}.$ 
\end{theorem}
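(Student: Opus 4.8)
The plan is to reduce the whole statement to a single $L^{1}$ bound on the kernel $f_{k}=\mathcal{L}^{-1}\{\hat f_{1}^{k}\}$ from \eqref{Fkdef}, and then to estimate that kernel by \emph{factoring} $\hat f_{1}^{k}$ into two pieces that are each controlled by Lemmas \ref{invL_cosech} and \ref{invL_sinh_sinh}. The conceptual point is that, since we are now in the regime $\nu>1/2$, the positivity Lemma \ref{PositivityLemma} is unavailable, so the argument must run entirely through absolute values and the triangle inequality, rather than through the maximum-principle positivity exploited in Theorem \ref{Theorem2}.

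First I would specialize the recurrence \eqref{hrecurrenceLaplace} to $\theta=\theta^{*}=1/(1+\sqrt{\kappa_1/\kappa_2})$, which gives $\hat w^{(k)}(s)=(-1)^{k}(\theta^{*})^{k}\hat f_{1}^{k}(s)\hat w^{(0)}(s)$, hence by the convolution theorem $w^{(k)}=(-1)^{k}(\theta^{*})^{k}(f_{k}*w^{(0)})$. Applying Lemma \ref{SimpleLaplaceLemma}(iii) at once yields $\|w^{(k)}\|_{L^{\infty}(0,T)}\le(\theta^{*})^{k}\|f_{k}\|_{L^{1}(0,T)}\|w^{(0)}\|_{L^{\infty}(0,T)}$, so the theorem reduces to proving
\[
\|f_{k}\|_{L^{1}(0,T)}\le\Big(\tfrac{2(1+e^{-\delta})}{(1-e^{-\mu_2\beta_1})(1-e^{-\mu_2\beta_2})}\Big)^{k}e^{-2\mu_2 k^{1/(1-\nu)}}.
\]

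The core step is to factor $\hat f_{1}^{k}=\dfrac{\sinh^{k}((B-A)s^{\nu})}{\sinh^{k}(As^{\nu})\cosh^{k}(Bs^{\nu})}$ from \eqref{Gdef} into a product $\hat p\,\hat q$ in which $\hat p$ carries the large argument $C=\max(A,B)$ and $\hat q$ the small argument $D=\min(A,B)$, and then invoke Lemma \ref{SimpleLaplaceLemma}(ii): $\|f_{k}\|_{L^{1}}=\|\mathcal{L}^{-1}\{\hat p\}*\mathcal{L}^{-1}\{\hat q\}\|_{L^{1}}\le\|\mathcal{L}^{-1}\{\hat p\}\|_{L^{1}}\|\mathcal{L}^{-1}\{\hat q\}\|_{L^{1}}$ (existence of the two inverse transforms following from the same $\mathcal{O}(s^{-p})$ decay estimate used for $f_{1}$, and signs being irrelevant to the $L^{1}$ norm). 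There are two cases. If $A>B$, then $\sinh((B-A)s^{\nu})=-\sinh((C-D)s^{\nu})$ and I factor $\hat p=\sinh^{k}((C-D)s^{\nu})/\sinh^{k}(Cs^{\nu})$, $\hat q=\sech^{k}(Ds^{\nu})$: here $\mathcal{L}^{-1}\{\hat p\}$ is bounded directly by Lemma \ref{invL_sinh_sinh}(ii) with $l_{2}=C,\,l_{1}=D$, which produces precisely the constants $A_{2}=\delta$, $C_{2}=\beta_{1}$, $B_{2}=\mu_{2}$. If $A<B$, I instead take $\hat p=\sinh^{k}((C-D)s^{\nu})/\cosh^{k}(Cs^{\nu})$ and $\hat q=\cosech^{k}(Ds^{\nu})$, using Lemma \ref{invL_cosech}(ii) directly for $\hat q$ (giving $\tfrac{2}{1-e^{-\mu_2\beta_2}}$ with $\beta_{2}=((2+k)^{c}-k^{c})^{1/c(1-\nu)}$). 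In either case each factor contributes one copy of $e^{-\mu_2 k^{1/(1-\nu)}}$, and the two copies combine into the $e^{-2\mu_2 k^{1/(1-\nu)}}$ of the claim.

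The one extra ingredient I must supply is that replacing a $\sinh$ in a denominator by a $\cosh$ (i.e. $\cosech\leftrightarrow\sech$ for $\hat q$, and $\sinh/\sinh\leftrightarrow\sinh/\cosh$ for $\hat p$) does not change the $L^{1}$ \emph{upper} bound. The only difference is the expansion $(1-e^{-2ls^{\nu}})^{-k}=\sum_{m}\binom{m+k-1}{m}e^{-2mls^{\nu}}$ versus $(1+e^{-2ls^{\nu}})^{-k}=\sum_{m}(-1)^{m}\binom{m+k-1}{m}e^{-2mls^{\nu}}$; since $\mathcal{L}^{-1}\{e^{-\lambda s^{\nu}}\}\ge 0$ by Lemma \ref{invL_exp}(i) and the binomial coefficients are unchanged in absolute value, the triangle inequality delivers the identical majorant, so the $\sech$- and $\sinh/\cosh$-analogues inherit the exact bounds of Lemmas \ref{invL_cosech}(ii) and \ref{invL_sinh_sinh}(ii). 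Thus $\|\mathcal{L}^{-1}\{\hat p\}\|_{L^{1}}\le\big(\tfrac{1+e^{-\delta}}{1-e^{-\mu_2\beta_1}}\big)^{k}e^{-\mu_2 k^{1/(1-\nu)}}$ and $\|\mathcal{L}^{-1}\{\hat q\}\|_{L^{1}}\le\big(\tfrac{2}{1-e^{-\mu_2\beta_2}}\big)^{k}e^{-\mu_2 k^{1/(1-\nu)}}$, and multiplying them gives the reduced estimate and hence the theorem. The main obstacle is exactly this loss of positivity: for $\nu>1/2$ none of the relevant kernels is sign-definite, so I cannot evaluate $\hat g_{s}$ at $s\to 0^{+}$ as in \eqref{proofpart2}--\eqref{proofpart2_2}; everything rests instead on the termwise series bounds together with the super-additivity inequality $(x+y)^{1/(1-\nu)}\ge x^{1/(1-\nu)}+y^{1/(1-\nu)}$, and the delicate part is verifying that those series bounds survive the sign changes forced by the $\cosh$ denominators.
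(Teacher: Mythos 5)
Your proposal is correct and takes essentially the same route as the paper's own proof: the same reduction to bounding $\|f_k\|_{L^{1}(0,T)}$ via Lemma \ref{SimpleLaplaceLemma}, the identical case-split and factorizations ($\hat f_1^{k}$ split into $\sinh^{k}((C-D)s^{\nu})/\sinh^{k}(Cs^{\nu})$ times $\sech^{k}(Ds^{\nu})$ when $A>B$, and into $\sinh^{k}((C-D)s^{\nu})/\cosh^{k}(Cs^{\nu})$ times $\cosech^{k}(Ds^{\nu})$ when $A<B$), and the same invocation of Lemma \ref{invL_sinh_sinh}(ii) and Lemma \ref{invL_cosech}(ii) with the same identification of the constants $\mu_2$, $\beta_1$, $\beta_2$, $\delta$. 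Your only addition is an explicit justification, via termwise positivity of $\mathcal{L}^{-1}\{e^{-\lambda s^{\nu}}\}$ and the triangle inequality, of the majorization of the $\cosh$-denominator kernels by their $\sinh$-denominator counterparts, which the paper asserts without proof.
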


\begin{proof}
When $A>B$, we rewrite $f_k(t)$ as the convolution $(-1)^k(p_k * q_k)(t)$, where
\begin{equation}
	\LL(p_k(t)) = \hat{p}^k(s) = \frac{\sinh^k((A-B)s^{\nu})}{\sinh^k(As^{\nu})} \quad\text{and}\quad
	\LL(q_k(t)) = \hat{q}^k(s) = \frac{1}{\cosh^k(Bs^{\nu})}.
\end{equation}
Using part (ii) of Lemma \ref{invL_sinh_sinh}, we obtain a bound
\begin{equation}\label{proof3_1}
	\|p_k(t)\|_{L^1(0,T)} \leq \left(\frac{(1+\exp(-\delta))}{1-\exp(-\mu_2\beta_1)}\right)^k\exp\left(-\mu_2 k^{\frac{1}{1-\nu}}\right), 
\end{equation}
where $\beta_1 = \left[\sum_{p=0}^{c-1}\binom{c}{p} (\frac{2A}{B})^{c-p} k^p\right]^{1/c(1-\nu)}$ and  $c = \floor*{\frac{1}{1-\nu}}$ and $\mu_2 = (1-\nu)\nu^{\nu/(1-\nu)}(B/t^{\nu})^{1/(1-\nu)}$.
Now to find the bound of $q_k(t)$ we can use the same procedure as in part (i) of Lemma \ref{invL_cosech} and obtain:
\begin{equation*}
	\mathcal{L}^{-1}\left(\sech^{k}(Bs^{\nu})\right)
	 =  2^{k}{\displaystyle \sum_{m=0}^{\infty}}(-1)^m\binom{m+k-1}{m}\frac{(2m+k)B\nu}{t^{\nu+1}}M_{\nu}\left(\frac{(2m+k)B}{t^{\nu}}\right).
\end{equation*} 
Moreover we have,
\begin{equation*}
	\| \mathcal{L}^{-1}\left\{\sech^{k}(Bs^{\nu})\right\}\|_{L^{1}(0,T)} \leq \| \mathcal{L}^{-1}\left\{\cosech^{k}(Bs^{\nu})\right\}\|_{L^{1}(0,T)}
\end{equation*}
Therefor using part (ii) of Lemma \ref{invL_cosech}, we get
\begin{equation} \label{proof3_2}
	\|q_k(t)\|_{L^1(0,T)} \leq \left(\frac{2}{1-\exp(-\mu_2\beta_2)}\right)^k\exp\left(-\mu_2 k^{\frac{1}{1-\nu}}\right),
\end{equation}
where $\beta_2 = \left[\sum_{p=0}^{c-1}\binom{c}{p} 2^{c-p} k^p\right]^{1/c(1-\nu)}$.
Hence combining \eqref{proof3_1} and \eqref{proof3_2}, we get
\begin{equation} \label{NNWR_AgB}
	\| f_k\|_{L^{1}(0,T)}\leq
	\left(\frac{2\theta^{*}(1+\exp(-\delta))}{(1-\exp(-\mu_2\beta_1))(1-\exp(-\mu_2\beta_2))}\right)^{k}\exp\left(-2\mu_2 k^{1/(1-\nu)}\right).
\end{equation}

For the case $B>A$, we rewrite $f_k(t)$ as the convolution $(-1)^k(r_k * m_k)(t)$, where
$$ \LL(r_k(t)) = \hat{r}^k(s) = \frac{\sinh^k((B-A)s^{\nu})}{\cosh^k(Bs^{\nu})} \quad\text{and}\quad
\LL(m_k(t)) = \hat{m}^k(s) = \frac{1}{\sinh^k(As^{\nu})}.$$
To find an estimate of $r_k(t)$, we can use the same procedure as in part (i) of Lemma \ref{invL_sinh_sinh} and obtain:
\begin{multline*}
	\mathcal{L}^{-1}\left(\frac{\sinh^{k}((B-A)s^{\nu})}{\cosh^{k}(Bs^{\nu})}\right) \\
	= {\displaystyle \sum_{j=0}^{k}}(-1)^j\binom{k}{j} {\displaystyle \sum_{m=0}^{\infty}} (-1)^m\binom{m+k-1}{m}\frac{(2mB+kA+2j(B-A))\nu}{t^{\nu+1}}M_{\nu}\left(\frac{(2mB+kA+2j(B-A))}{t^{\nu}}\right).
\end{multline*}
Moreover we have,
\begin{equation*}
	\left\| \mathcal{L}^{-1}\left\{\frac{\sinh^{k}((B-A)s^{\nu})}{\cosh^{k}(Bs^{\nu})}\right\}\right\|_{L^{1}(0,T)} \leq \left\| \mathcal{L}^{-1}\left\{\frac{\sinh^{k}((B-A)s^{\nu})}{\sinh^{k}(Bs^{\nu})}\right\}\right\|_{L^{1}(0,T)}.
\end{equation*}
Using the same reasoning as in part (i), we obtain the bounds
\begin{equation}
	\|r_k(t)\|_{L^1(0,T)} \leq \left(\frac{(1+\exp(-\delta))}{1-\exp(-\mu_2\beta_1)}\right)^k\exp\left(-\mu_2 k^{\frac{1}{1-\nu}}\right), 
\end{equation}
\begin{equation}
	 \left\|m_k(t)\right\|_{L^1(0,T)} \leq \left(\frac{2}{1-\exp(-\mu_2\beta_2)}\right)^k\exp\left(-\mu_2 k^{\frac{1}{1-\nu}}\right),
\end{equation}
where $\beta_1 =  \left[\sum_{p=0}^{c-1}\binom{c}{p} (\frac{2B}{A})^{c-p} k^p\right]^{1/c(1-\nu)}, \beta_2 = \left[\sum_{p=0}^{c-1}\binom{c}{p} 2^{c-p} k^p\right]^{1/c(1-\nu)}$, $c = \floor*{\frac{1}{1-\nu}}$ and $\mu_2 = (1-\nu)\nu^{\nu/(1-\nu)}(A /t^{\nu})^{1/(1-\nu)}$.
Therefore, we have
\begin{equation}  \label{NNWR_AlB}
	\| f_k\|_{L^{1}(0,T)}\leq
	\left(\frac{2\theta^{*}(1+\exp(-\delta))}{(1-\exp(-\mu_2\beta_1))(1-\exp(-\alpha\beta_2))}\right)^{k}\exp\left(-2\mu_2 k^{1/(1-\nu)}\right)
\end{equation}
Hence from \eqref{NNWR_AgB}\&\eqref{NNWR_AlB}, we have the required estimates.
\hfill\end{proof}

\section{Convergence of NNWR Algorithm} \label{NNWR_con}
For the convergence study of the NNWR algorithm \eqref{NNWRD}-\eqref{NNWR2} in 1D, the domain $\Omega = (0,L)$ is divided into $N$ non-overlapping subdomains $\Omega_i = (x_{i-1},x_i), i = 1,2,...,N$. Define subdomain length $h_i := x_i - x_{i-1}$ and  different constant diffusion coefficients $\kappa(\boldsymbol{x},t)=\kappa_i \text{ in each } \Omega_i$. Let, $w_i^{(k)}(t)$ be the initial error along the artificial boundary at $\{x_i\}_{i=1}^{N-1}$ and on the original boundary $w_0^{(k)}(t)=0$ at $x=x_0$, $w_N^{(k)}=0$ at $x=x_N$. Applying the Laplace transform in time, the equations \eqref{NNWRD}-\eqref{NNWR2} reduce to:
\begin{equation}\label{NNWRL}
	\begin{array}{rcll}
		\begin{cases}
			s^{2\nu}\hat{u}_{i}^{(k)}=\kappa_i\partial_{xx}\hat{u}_{i}^{(k)}, & \textrm{in } \Omega_i,\\
			\hat{u}_{i}^{(k)} = \hat{w}_{i-1}^{(k-1)}, & \textrm{at }x_{i-1}, i \neq 1,\\
			\hat{u}_{i}^{(k)} = \hat{w}_{i}^{(k-1)}, & \textrm{at }x_{i},i\neq N,\\
		\end{cases}
	\end{array}\!\!\!\!
	\begin{array}{rcll}
		\begin{cases}
			s^{2\nu}\hat{\psi}_{i}^{(k)}=\kappa_i\partial_{xx}\hat{\psi}_{i}^{(k)}, & \textrm{in }\Omega_i,\\
			-\kappa_i\partial_{x}\hat{\psi}_{i}^{(k)} = \kappa_{i-1}\partial_{x}\hat{\psi}_{i-1}^{(k)}-\kappa_{i}\partial_{x}\hat{\psi}_{i}^{(k)}, & \textrm{at }x_{i-1},i \neq 1,\\
			\kappa_i\partial_{x}\hat{\psi}_{i}^{(k)} = \kappa_{i}\partial_{x}\hat{\psi}_{i}^{(k)}-\kappa_{i+1}\partial_{x}\hat{\psi}_{i+1}^{(k)}, & \textrm{at }x_{i},i\neq N,\\
		\end{cases}
	\end{array}
\end{equation}
except for the original boundary at the first and last subdomain, where a homogeneous Dirichlet condition replaces the Neumann boundary condition. Then the trace is updated by
\begin{equation}\label{NNWRL2}
	\hat{w}_i^{(k)}(s)= \hat{w}_i^{(k-1)}(s) - \theta_i(\hat{\psi}_{i}^{(k)}(x_i,s)+\hat{\psi}_{i+1}^{(k)}(x_i,s)),\quad i=1,\ldots,N-1.
\end{equation}
The solutions to the Dirichlet subproblems for $i=1,\ldots,N$ are respectively:
\begin{align}\label{NNWR_1}
	&\hat{u}^{(k)}_{i}(x,s)  \\
	 &=\frac{1}{\sinh(h_{i}\sqrt{\frac{s^{2\nu}}{\kappa_{i}}})}
	\left(\hat{w}^{(k-1)}_{i}(s)\sinh\left((x-x_{i-1})\sqrt{\frac{s^{2\nu}}{\kappa_{i}}}\right)
	+\hat{w}^{(k-1)}_{i-1}(s)\sinh\left((x_{i}-x)\sqrt{\frac{s^{2\nu}}{\kappa_{i}}}\right)\right) \nonumber.
\end{align}
And using the solutions \eqref{NNWR_1}, Neumann subproblems give:
\begin{align*}
	\hat{\psi}^{(k)}_{1}(x,s) &= C^{(k-1)}_{1}(s) \sinh\left((x-x_{0})\sqrt{\frac{s^{2\nu}}{\kappa_1}}\right),\\ 
	\intertext{for $i = 2,\ldots,N-1,$}
    \hat{\psi}^{(k)}_{i}(x,s) &= C^{(k-1)}_{i}(s)\cosh\left((x-x_{i-1})\sqrt{\frac{s^{2\nu}}{\kappa_{i}}}\right)+D^{(k-1)}_{i}(s)\cosh\left((x_{i}-x)\sqrt{\frac{s^{2\nu}}{\kappa_{i}}}\right), \nonumber\\
    \intertext{and}
    \hat{\psi}^{(k)}_{N}(x,s) &= D^{(k-1)}_{N}(s)\sinh\left((x_{N}-x)\sqrt{\frac{s^{2\nu}}{\kappa_N}}
    \right). \nonumber
\end{align*}
For simplification, we use the notation $\sigma_{i}:=\sinh\left(h_{i}\sqrt{s^{2\nu}/\kappa_i}\right)$ and $\gamma_{i}:=\cosh\left(h_{i}\sqrt{s^{2\nu}/\kappa_i}\right)$. Then using the boundary conditions in Neumann subproblems, we obtain:
\begin{align*}
	C^{(k-1)}_{1} & = \frac{1}{\gamma_{1}}\left(\hat{w}^{(k-1)}_{1}\left(
	\frac{\gamma_{1}}{\sigma_{1}}+\sqrt{\frac{\kappa_2}{\kappa_{1}}}\frac{\gamma_{2}}{\sigma_{2}}\right)
	-\sqrt{\frac{\kappa_2}{\kappa_{1}}}\frac{\hat{w}^{(k-1)}_{2}}{\sigma_{2}}\right),\\
	C^{(k-1)}_{i} & =  \frac{1}{\sigma_{i}}\left(\hat{w}^{(k-1)}_{i}
	\left(\frac{\gamma_{i}}{\sigma_{i}}+\sqrt{\frac{\kappa_{i+1}}{\kappa_{i}}}\frac{\gamma_{i+1}}{\sigma_{i+1}}\right)
	-\frac{\hat{w}^{(k-1)}_{i-1}}{\sigma_{i}}-\sqrt{\frac{\kappa_{i+1}}{\kappa_{i}}}\frac{\hat{w}_{i+1}}{\sigma_{i+1}}\right),\quad i = 2,\ldots,N-1, \nonumber\\
	D^{(k-1)}_{i} & =  \frac{1}{\sigma_{i}}\left(\hat{w}^{(k-1)}_{i-1}
	\left(\frac{\gamma_{i}}{\sigma_{i}}+\sqrt{\frac{\kappa_{i-1}}{\kappa_{i}}}\frac{\gamma_{i-1}}{\sigma_{i-1}}\right)
	-\sqrt{\frac{\kappa_{i-1}}{\kappa_{i}}}\frac{\hat{w}^{(k-1)}_{i-2}}{\sigma_{i-1}}-\frac{\hat{w}_{i}}{\sigma_{i}}\right),\quad i = 2,\ldots,N-1, \nonumber\\
	D^{(k-1)}_{N} & = \frac{1}{\gamma_{N}}\left(\hat{w}^{(k-1)}_{N-1}\left(
	\sqrt{\frac{\kappa_{N-1}}{\kappa_{N}}}\frac{\gamma_{N-1}}{\sigma_{N-1}}+\frac{\gamma_{N}}{\sigma_{N}}\right)
	-\sqrt{\frac{\kappa_{N-1}}{\kappa_{N}}}\frac{\hat{w}^{(k-1)}_{N-2}}{\sigma_{N-1}}\right).\nonumber
\end{align*}
Substituting $\hat{\psi}_{i}$ values in updating equation \eqref{NNWRL2} and using the identity $\gamma_{i}^{2}-\sigma_{i}^{2} = 1$, we have
\begin{align}
	\hat{w}_{1}^{(k)} &=\hat{w}_{1}^{(k-1)}-\theta_1\left(\hat{w}_{1}^{(k-1)}
	\left(2+\sqrt{\frac{\kappa_{1}}{\kappa_{2}}}\frac{\gamma_{1}\gamma_{2}}{\sigma_{1}\sigma_{2}}
	+\sqrt{\frac{\kappa_{2}}{\kappa_{1}}}\frac{\sigma_{1}\gamma_{2}}{\gamma_{1}\sigma_{2}}\right)\right. \label{NN1}\\
	&\left.\kern-\nulldelimiterspace+\frac{\hat{w}_{2}^{(k-1)}}{\sigma_{2}}\left(
	\sqrt{\frac{\kappa_{3}}{\kappa_{2}}}\frac{\gamma_{3}}{\sigma_{3}}-\sqrt{\frac{\kappa_{2}}{\kappa_{1}}}\frac{\sigma_{1}}{\gamma_{1}}\right)
	-\sqrt{\frac{\kappa_{3}}{\kappa_{2}}}\frac{\hat{w}_{3}^{(k-1)}}{\sigma_{2}\sigma_{3}}\right), \nonumber\\
		\hat{w}_{i}^{(k)}&=\hat{w}_{i}^{(k-1)}-\theta_i\left(\hat{w}_{i}^{(k-1)}
		\left(2+\left(\sqrt{\frac{\kappa_i}{\kappa_{i+1}}}+\sqrt{\frac{\kappa_{i+1}}{\kappa_{i}}}\right)\frac{\gamma_{i}\gamma_{i+1}}{\sigma_{i}\sigma_{i+1}}\right)
		+\frac{\hat{w}_{i+1}^{(k-1)}}{\sigma_{i+1}}
		\sqrt{\frac{\kappa_{i+2}}{\kappa_{i+1}}}\left(\frac{\gamma_{i+2}}{\sigma_{i+2}}-\sqrt{\frac{\kappa_{i+1}}{\kappa_{i}}}\frac{\gamma_{i}}{\sigma_{i}}\right)\right.\\
		&\left.\kern-\nulldelimiterspace+\frac{\hat{w}_{i-1}^{(k-1)}}{\sigma_{i}}
		\left(\sqrt{\frac{\kappa_{i-1}}{\kappa_{i}}}\frac{\gamma_{i-1}}{\sigma_{i-1}}-\sqrt{\frac{\kappa_{i}}{\kappa_{i+1}}}\frac{\gamma_{i+1}}{\sigma_{i+1}}\right)
		-\sqrt{\frac{\kappa_{i+2}}{\kappa_{i+1}}}\frac{\hat{w}_{i+2}^{(k-1)}}{\sigma_{i+1}\sigma_{i+2}}
		-\sqrt{\frac{\kappa_{i-1}}{\kappa_{i}}}\frac{\hat{w}_{i-2}^{(k-1)}}{\sigma_{i}\sigma_{i-1}}\right), \quad i = 2,\ldots,N-2, \nonumber\\
	\hat{w}_{N-1}^{(k)} &= \hat{w}_{N-1}^{(k-1)}-\theta_{N-1}\left(
	\hat{w}_{N-1}^{(k-1)}\left(2+\sqrt{\frac{\kappa_{N}}{\kappa_{N-1}}}\frac{\gamma_{N-1}\gamma_{N}}{\sigma_{N-1}\sigma_{N}}
	+\sqrt{\frac{\kappa_{N-1}}{\kappa_{N}}}\frac{\sigma_{N}\gamma_{N-1}}{\gamma_{N}\sigma_{N-1}}\right)\right. \label{NN3}\\
	&\left.\kern-\nulldelimiterspace+\frac{\hat{w}_{N-2}^{(k-1)}}{\sigma_{N-1}}\left(
	\sqrt{\frac{\kappa_{N-2}}{\kappa_{N-1}}}\frac{\gamma_{N-2}}{\sigma_{N-2}}-\sqrt{\frac{\kappa_{N-1}}{\kappa_{N}}}\frac{\sigma_{N}}{\gamma_{N}}\right)-\sqrt{\frac{\kappa_{N-2}}{\kappa_{N-1}}}\frac{\hat{w}_{N-3}^{(k-1)}}{\sigma_{N-1}\sigma_{N-2}}\right). \nonumber	
\end{align}
Substituting $\theta_i= \theta_i^{*} := \frac{1}{2+\sqrt{\kappa_i/\kappa_{i+1}}+\sqrt{\kappa_{i+1}/\kappa_{i}}}$ into \eqref{NN1}-\eqref{NN3} leads to the following equations:
\begin{align}
	\hat{w}_{1}^{(k)}&=-\theta_1^*\left[\hat{w}_{1}^{(k-1)}
	\left(\sqrt{\frac{\kappa_{1}}{\kappa_{2}}}\left(\frac{\gamma_{1}\gamma_{2}}{\sigma_{1}\sigma_{2}}-1\right)
	+\sqrt{\frac{\kappa_{2}}{\kappa_{1}}}\left(\frac{\sigma_{1}\gamma_{2}}{\gamma_{1}\sigma_{2}}-1\right)\right)\right. \label{NN4}\\ 
	&\left.\kern-\nulldelimiterspace+\frac{\hat{w}_{2}^{(k-1)}}{\sigma_{2}}\left(
	\sqrt{\frac{\kappa_{3}}{\kappa_{2}}}\frac{\gamma_{3}}{\sigma_{3}}-\sqrt{\frac{\kappa_{2}}{\kappa_{1}}}\frac{\sigma_{1}}{\gamma_{1}}\right)-\sqrt{\frac{\kappa_{3}}{\kappa_{2}}}\frac{\hat{w}_{3}^{(k-1)}}{\sigma_{2}\sigma_{3}}\right],\nonumber 
\end{align}
\begin{align}
	  \hat{w}_{i}^{(k)}&= -\theta_i^*\left[\hat{w}_{i}^{(k-1)}
	  \left(\sqrt{\frac{\kappa_i}{\kappa_{i+1}}}+\sqrt{\frac{\kappa_{i+1}}{\kappa_{i}}}\right)\left(\frac{\gamma_{i}\gamma_{i+1}}{\sigma_{i}\sigma_{i+1}}-1\right) +\frac{\hat{w}_{i+1}^{(k-1)}}{\sigma_{i+1}}
	  \left(\sqrt{\frac{\kappa_{i+2}}{\kappa_{i+1}}}\frac{\gamma_{i+2}}{\sigma_{i+2}}-\sqrt{\frac{\kappa_{i+1}}{\kappa_{i}}}\frac{\gamma_{i}}{\sigma_{i}}\right)\right. \\
	  &\left.\kern-\nulldelimiterspace +\frac{\hat{w}_{i-1}^{(k-1)}}{\sigma_{i}} \left(\sqrt{\frac{\kappa_{i-1}}{\kappa_{i}}}\frac{\gamma_{i-1}}{\sigma_{i-1}}-\sqrt{\frac{\kappa_{i}}{\kappa_{i+1}}}\frac{\gamma_{i+1}}{\sigma_{i+1}}\right)-\sqrt{\frac{\kappa_{i+2}}{\kappa_{i+1}}}\frac{\hat{w}_{i+2}^{(k-1)}}{\sigma_{i+1}\sigma_{i+2}}
	  -\sqrt{\frac{\kappa_{i-1}}{\kappa_{i}}}\frac{\hat{w}_{i-2}^{(k-1)}}{\sigma_{i}\sigma_{i-1}}\right],  \nonumber \\
	\hat{w}_{N-1}^{(k)}&= -\theta_{N-1}^*\left[
	\hat{w}_{N-1}^{(k-1)}\left(\sqrt{\frac{\kappa_{N}}{\kappa_{N-1}}}\left(\frac{\gamma_{N-1}\gamma_{N}}{\sigma_{N-1}\sigma_{N}}-1\right)
	+\sqrt{\frac{\kappa_{N-1}}{\kappa_{N}}}\left(\frac{\sigma_{N}\gamma_{N-1}}{\gamma_{N}\sigma_{N-1}}-1\right)\right) \right. \label{NN6}\\
	&\left.\kern-\nulldelimiterspace +\frac{\hat{w}_{N-2}^{(k-1)}}{\sigma_{N-1}}\left(
	\sqrt{\frac{\kappa_{N-2}}{\kappa_{N-1}}}\frac{\gamma_{N-2}}{\sigma_{N-2}}-\sqrt{\frac{\kappa_{N-1}}{\kappa_{N}}}\frac{\sigma_{N}}{\gamma_{N}}\right) -\sqrt{\frac{\kappa_{N-2}}{\kappa_{N-1}}}\frac{\hat{w}_{N-3}^{(k-1)}}{\sigma_{N-1}\sigma_{N-2}}\right]. \nonumber
\end{align}
Before going to the main theorem for convergence estimate, we first prove a few lemmas.
\begin{lemma} \label{estimate_1}
	For $0 < l_1 < l_2, \alpha \in (0,1)$, the following inequality holds: 
	\begin{equation*}
		\left\|\mathcal{L}^{-1}\left\{\frac{\exp(-l_1s^{\alpha})}{1-\exp(-l_2s^{\alpha})}\right\}\right\|_{L^1(0,t)} \leq \left[1+\frac{t^{\alpha}\Gamma(2-\alpha)}{l_2\Lambda^{(1-\alpha)}}\right] \exp\left(-\Lambda\left(\frac{l_1}{t^{\alpha}}\right)^{1/(1-\alpha)}\right),
	\end{equation*}	
	when $\Lambda = (1-\alpha)\alpha^{\alpha/(1-\alpha)}$.
\end{lemma}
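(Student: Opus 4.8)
The plan is to expand the kernel as a geometric series and reduce the whole estimate to the single--exponential bound of Lemma~\ref{invL_exp}(iii). First I would observe that for $\mathrm{Re}(s)>0$ one has $|e^{-l_2 s^{\alpha}}|<1$, so that
\begin{equation*}
\frac{e^{-l_1 s^{\alpha}}}{1-e^{-l_2 s^{\alpha}}}=\sum_{m=0}^{\infty} e^{-(l_1+m l_2)s^{\alpha}}.
\end{equation*}
By Lemma~\ref{invL_exp}(i) each term inverts to $(l_1+ml_2)\alpha\,t^{-(\alpha+1)}M_{\alpha}\bigl((l_1+ml_2)t^{-\alpha}\bigr)$, which is nonnegative because $M_{\alpha}>0$. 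Hence the back--transform is a sum of nonnegative functions, and---exactly as in the Fubini/Tonelli argument used for Lemma~\ref{invL_cosech}(i)---the termwise inversion is justified and the $L^1(0,t)$ norm of the sum equals the sum of the norms. Applying Lemma~\ref{invL_exp}(iii) to each summand, together with the identity $(1-\alpha)(\alpha/t)^{\alpha/(1-\alpha)}\lambda^{1/(1-\alpha)}=\Lambda(\lambda/t^{\alpha})^{1/(1-\alpha)}$, gives
\begin{equation*}
\left\|\mathcal{L}^{-1}\left\{\frac{e^{-l_1 s^{\alpha}}}{1-e^{-l_2 s^{\alpha}}}\right\}\right\|_{L^1(0,t)}\le \sum_{m=0}^{\infty}\exp\!\left(-\Lambda\left(\frac{l_1+m l_2}{t^{\alpha}}\right)^{1/(1-\alpha)}\right).
\end{equation*}

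Next I would isolate the $m=0$ term, which produces precisely the global prefactor $\exp\!\bigl(-\Lambda(l_1/t^{\alpha})^{1/(1-\alpha)}\bigr)$, and bound the remaining tail by an integral. Since $y\mapsto \exp\!\bigl(-\Lambda(y/t^{\alpha})^{1/(1-\alpha)}\bigr)$ is decreasing on $(0,\infty)$, the integral test yields
\begin{equation*}
\sum_{m=1}^{\infty}\exp\!\left(-\Lambda\left(\frac{l_1+m l_2}{t^{\alpha}}\right)^{1/(1-\alpha)}\right)\le \frac{1}{l_2}\int_{l_1}^{\infty}\exp\!\left(-\Lambda\left(\frac{y}{t^{\alpha}}\right)^{1/(1-\alpha)}\right)dy.
\end{equation*}
The substitution $w=\Lambda(y/t^{\alpha})^{1/(1-\alpha)}$ then converts this integral into an incomplete Gamma function, giving $\dfrac{t^{\alpha}(1-\alpha)}{l_2\Lambda^{1-\alpha}}\,\Gamma(1-\alpha,w_0)$ with $w_0=\Lambda(l_1/t^{\alpha})^{1/(1-\alpha)}$.

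The core of the argument, and the step I expect to be the main obstacle, is factoring this incomplete Gamma tail so that exactly $\Gamma(2-\alpha)$ and the prefactor $e^{-w_0}$ emerge. Using $\Gamma(2-\alpha)=(1-\alpha)\Gamma(1-\alpha)$, the required inequality collapses to $\Gamma(1-\alpha,w_0)\le \Gamma(1-\alpha)e^{-w_0}$. I would establish this by the shift $w=w_0+v$ in $\Gamma(1-\alpha,w_0)=\int_{w_0}^{\infty}w^{-\alpha}e^{-w}\,dw$, whence the factor $e^{-w_0}$ comes out and the bound $(w_0+v)^{-\alpha}\le v^{-\alpha}$ (valid since $\alpha>0$ and $w_0>0$) dominates the remaining integral by $\int_0^{\infty}v^{-\alpha}e^{-v}\,dv=\Gamma(1-\alpha)$. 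Combining the isolated $m=0$ term with this tail estimate produces the factor $\bigl[1+t^{\alpha}\Gamma(2-\alpha)/(l_2\Lambda^{1-\alpha})\bigr]$ multiplying $e^{-w_0}$, which is exactly the claimed bound. Apart from this Gamma inequality, the only points needing care are the justification of the termwise inversion (handled by nonnegativity of $M_{\alpha}$ and monotone convergence) and the monotonicity required for the integral test, both of which are routine.
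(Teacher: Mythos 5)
Your proposal is correct, and it follows the paper's skeleton — geometric series expansion of the kernel, termwise application of Lemma~\ref{invL_exp}(iii) justified by positivity of $M_{\alpha}$ — but it executes the key factoring step differently. The paper never isolates the $m=0$ term or touches incomplete Gamma functions: instead it pulls the prefactor $e^{-w_0}$, $w_0=\Lambda(l_1/t^{\alpha})^{1/(1-\alpha)}$, out of \emph{every} term of the series at once, using the superadditivity $(a+b)^{1/(1-\alpha)}\geq a^{1/(1-\alpha)}+b^{1/(1-\alpha)}$ (valid because $1/(1-\alpha)>1$), which gives $\exp\bigl(-\Lambda((l_1+nl_2)/t^{\alpha})^{1/(1-\alpha)}\bigr)\leq e^{-w_0}\exp\bigl(-\Lambda(nl_2/t^{\alpha})^{1/(1-\alpha)}\bigr)$; the remaining sum over $n\geq 0$ is then bounded by $1+\int_0^{\infty}\exp\bigl(-\Lambda(xl_2/t^{\alpha})^{1/(1-\alpha)}\bigr)\,dx$, and that integral is a \emph{complete} Gamma integral evaluating directly to $t^{\alpha}\Gamma(2-\alpha)/(l_2\Lambda^{1-\alpha})$. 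You instead keep the full terms, run the integral test on the shifted tail $\int_{l_1}^{\infty}$, land on $\Gamma(1-\alpha,w_0)$, and must then prove the extra inequality $\Gamma(1-\alpha,w_0)\leq\Gamma(1-\alpha)e^{-w_0}$ — which you do correctly via the shift $w=w_0+v$ and $(w_0+v)^{-\alpha}\leq v^{-\alpha}$. The trade-off: your intermediate bound is slightly sharper (superadditivity is applied only at the very end, inside the Gamma inequality, rather than termwise), and the incomplete-Gamma estimate is a nice self-contained fact; the paper's route is shorter and avoids introducing $\Gamma(\cdot,\cdot)$ altogether. Both arguments arrive at the identical final constant.
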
	

\begin{proof}
	Let,
	\begin{align*}
		\chi(s) := \frac{\exp(-l_1s^{\alpha})}{1-\exp(-l_2s^{\alpha})}
		= \sum_{n = 0}^{\infty}\exp(-(l_1+nl_2)s^{\alpha}).
	\end{align*}	
	So using the part(iii) of Lemma \ref{invL_exp}, we have
	\begin{align*}
		\left\|\mathcal{L}^{-1}\left\{\chi(s)\right\}\right\|_{L^1(0,t)} &\leq\sum_{n=0}^{\infty} \exp\left(-\Lambda\left(\frac{l_1+nl_2}{t^{\alpha}}\right)^{1/(1-\alpha)}\right)\\
		&\leq \exp\left(-\Lambda\left(\frac{l_1}{t^{\alpha}}\right)^{1/(1-\alpha)}\right) \sum_{n=0}^{\infty} \exp\left(-\Lambda\left(\frac{nl_2}{t^{\alpha}}\right)^{1/(1-\alpha)}\right).
	\end{align*}
	Now, we can further simplify:
	\begin{align*}
		\sum_{n=0}^{\infty} \exp\left(-\Lambda\left(\frac{nl_2}{t^{\alpha}}\right)^{1/(1-\alpha)}\right)
		&= 1+\sum_{n=1}^{\infty} \exp\left(-\Lambda\left(\frac{nl_2}{t^{\alpha}}\right)^{1/(1-\alpha)}\right)\\
		&\leq 1+ \int_{0}^{\infty}\exp\left(-\Lambda\left(\frac{xl_2}{t^{\alpha}}\right)^{1/(1-\alpha)}\right)dx \\
		&= 1+ \frac{t^{\alpha}\Gamma(2-\alpha)}{l_2\Lambda^{(1-\alpha)}}.
	\end{align*}
	These completes the result.
	\hfill\end{proof}	

\begin{lemma} \label{lam_7}
	For $l_0,l,l_1,l_2\rho_1,\rho_2 >0$, $ \beta = \frac{1}{1-\alpha}, \alpha \in(0,1)$ and $L(t) := \frac{t^{\alpha}\Gamma(2-\alpha)}{2\Lambda^{(1-\alpha)}}$ for all $t>0$, the following inequalities hold:
	\begin{enumerate}
		\item [(i)] for $l<l_1,l_2$,
		\begin{align*}
			&\left\|\mathcal{L}^{-1}\left\{\exp(2ls^{\alpha})\left(\frac{\cosh(l_1s^{\alpha})\cosh(l_2s^{\alpha})}{\sinh(l_1s^{\alpha})\sinh(l_2s^{\alpha})}-1\right)\right\}\right\|_{L^1(0,t)} \\
			&\leq 2\left[1+\frac{L(t)}{l_1}\right]\left[1+\frac{L(t)}{l_2}\right]\left(\exp\left(-2\Lambda\left(\frac{l_1-l}{t^{\alpha}}\right)^{\beta}\right)+\exp\left(-2\Lambda\left(\frac{l_2-l}{t^{\alpha}}\right)^{\beta}\right)\right),
		\end{align*}
		\item [(ii)] for $l<l_1,l_2$,
		\begin{align*}
			&\left\|\mathcal{L}^{-1}\left\{\exp(2ls^{\alpha})\left(\frac{\sinh(l_1s^{\alpha})\cosh(l_2s^{\alpha})}{\sinh(l_1s^{\alpha})\cosh(l_2s^{\alpha})}-1\right)\right\}\right\|_{L^1(0,t)} \\
			&\leq 2\left[1+\frac{L(t)}{l_1}\right]\left[1+\frac{L(t)}{l_2}\right]\left(\exp\left(-2\Lambda\left(\frac{l_1-l}{t^{\alpha}}\right)^{\beta}\right)+\exp\left(-2\Lambda\left(\frac{l_2-l}{t^{\alpha}}\right)^{\beta}\right)\right),
		\end{align*}
		\item [(iii)] for $l<l_1,l_2$,
		\begin{align*}
			&\left\|\mathcal{L}^{-1}\left\{\frac{\exp(2ls^{\alpha})}{\sinh(l_1s^{\alpha})\sinh(l_2s^{\alpha})}\right\}\right\|_{L^1(0,t)} \\
			&\leq 4\left[1+\frac{L(t)}{l_1}\right]\left[1+\frac{L(t)}{l_2}\right]\exp\left(-\Lambda\left(\frac{l_1-l}{t^{\alpha}}\right)^{\beta}-\Lambda\left(\frac{l_2-l}{t^{\alpha}}\right)^{\beta}\right),
		\end{align*}
		\item [(iv)] for $2l<l_0$,
		\begin{align*}
			&\left\|\mathcal{L}^{-1}\left\{\frac{\exp(2ls^{\alpha})}{\sinh(l_0s^{\alpha})}\left(\rho_1\frac{\cosh(l_1s^{\alpha})}{\sinh(l_1s^{\alpha})}-\rho_2\frac{\cosh(l_2s^{\alpha})}{\sinh(l_2s^{\alpha})}\right)\right\}\right\|_{L^1(0,t)} \\
			& \leq 2\left[1+\frac{L(t)}{l_0}\right]\left[\rho_1\left[1+\frac{L(t)}{l_1}\right]\left(\exp\left(-2\Lambda\left(\frac{l_0/2-l}{t^{\alpha}}\right)^{\beta}\right)+\exp\left(-2\Lambda\left(\frac{l_0/2+l_1-l}{t^{\alpha}}\right)^{\beta}\right)\right)\right.\\
			&\left.\kern-\nulldelimiterspace+\rho_2\left[1+\frac{L(t)}{l_2}\right]\left(\exp\left(-2\Lambda\left(\frac{l_0/2-l}{t^{\alpha}}\right)^{\beta}\right)+\exp\left(-2\Lambda\left(\frac{l_0/2+l_2-l}{t^{\alpha}}\right)^{\beta}\right)\right)\right],
		\end{align*}
		\item [(v)] for $2l<l_0$,
		\begin{align*}
			&\left\|\mathcal{L}^{-1}\left\{\frac{\exp(2ls^{\alpha})}{\sinh(l_0s^{\alpha})}\left(\rho_1\frac{\cosh(l_1s^{\alpha})}{\sinh(l_1s^{\alpha})}-\rho_2\frac{\sinh(l_2s^{\alpha})}{\cosh(l_2s^{\alpha})}\right)\right\}\right\|_{L^1(0,t)} \\
			& \leq 2\left[1+\frac{L(t)}{l_0}\right]\left[\rho_1\left[1+\frac{L(t)}{l_1}\right]\left(\exp\left(-2\Lambda\left(\frac{l_0/2-l}{t^{\alpha}}\right)^{\beta}\right)+\exp\left(-2\Lambda\left(\frac{l_0/2+l_1-l}{t^{\alpha}}\right)^{\beta}\right)\right)\right.\\
			&\left.\kern-\nulldelimiterspace+\rho_2\left[1+\frac{L(t)}{l_2}\right]\left(\exp\left(-2\Lambda\left(\frac{l_0/2-l}{t^{\alpha}}\right)^{\beta}\right)+\exp\left(-2\Lambda\left(\frac{l_0/2+l_2-l}{t^{\alpha}}\right)^{\beta}\right)\right)\right].
		\end{align*}
	\end{enumerate}	
\end{lemma}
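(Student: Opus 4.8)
The plan is to reduce every quotient of hyperbolic functions to a (finite combination of) geometric series in $e^{-2ls^{\alpha}}$, and then bound each resulting term by Lemma \ref{estimate_1} after reading the inverse transform of a product as a convolution and invoking part (ii) of Lemma \ref{SimpleLaplaceLemma}, namely $\|g*w\|_{L^1}\le\|g\|_{L^1}\|w\|_{L^1}$. The elementary identities I would use throughout are $\dfrac{1}{\sinh(ls^{\alpha})}=\dfrac{2e^{-ls^{\alpha}}}{1-e^{-2ls^{\alpha}}}$ and $\coth(ls^{\alpha})=\dfrac{1+e^{-2ls^{\alpha}}}{1-e^{-2ls^{\alpha}}}$, together with the analogous $1/\cosh$ and $\tanh$ versions, whose denominators $1+e^{-2ls^{\alpha}}$ produce an alternating geometric series satisfying the \emph{same} $L^1$ bound as Lemma \ref{estimate_1} after the triangle inequality absorbs the signs. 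Note that because each denominator that appears carries exponent $2l_j$, Lemma \ref{estimate_1} contributes exactly the prefactor $1+\frac{t^{\alpha}\Gamma(2-\alpha)}{2l_j\Lambda^{1-\alpha}}=1+L(t)/l_j$ claimed in the statement.

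For part (i) I would first do the algebra. Writing $a=e^{-2l_1s^{\alpha}}$, $b=e^{-2l_2s^{\alpha}}$, a short computation gives $\coth(l_1s^{\alpha})\coth(l_2s^{\alpha})-1=\dfrac{2(a+b)}{(1-a)(1-b)}$, so after multiplying by $e^{2ls^{\alpha}}$ the quantity splits into the two terms $\dfrac{2e^{-2(l_i-l)s^{\alpha}}}{(1-e^{-2l_1s^{\alpha}})(1-e^{-2l_2s^{\alpha}})}$, $i=1,2$. In each I would split the numerator exponential \emph{symmetrically}, $e^{-2(l_i-l)s^{\alpha}}=e^{-(l_i-l)s^{\alpha}}\, e^{-(l_i-l)s^{\alpha}}$, attach one copy to each denominator factor, and read the result as the convolution of $\dfrac{e^{-(l_i-l)s^{\alpha}}}{1-e^{-2l_1s^{\alpha}}}$ with $\dfrac{e^{-(l_i-l)s^{\alpha}}}{1-e^{-2l_2s^{\alpha}}}$. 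Bounding each factor by Lemma \ref{estimate_1} and multiplying, each term contributes $[1+L(t)/l_1][1+L(t)/l_2]\exp(-2\Lambda((l_i-l)/t^{\alpha})^{\beta})$, and summing over $i$ gives (i). Part (iii) is the same computation starting from $\dfrac{1}{\sinh(l_1s^{\alpha})\sinh(l_2s^{\alpha})}=\dfrac{4e^{-(l_1+l_2)s^{\alpha}}}{(1-e^{-2l_1s^{\alpha}})(1-e^{-2l_2s^{\alpha}})}$ and the split $e^{-(l_1+l_2-2l)s^{\alpha}}=e^{-(l_1-l)s^{\alpha}}e^{-(l_2-l)s^{\alpha}}$; part (ii) reduces to the identical machinery once the intended quotient is expanded in exponentials.

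For parts (iv) and (v) the prefactor $\dfrac{1}{\sinh(l_0s^{\alpha})}=\dfrac{2e^{-l_0s^{\alpha}}}{1-e^{-2l_0s^{\alpha}}}$ supplies the $2[1+L(t)/l_0]$ factor. I would treat the bracket by the triangle inequality over $\rho_1,\rho_2$, and expand $\coth(l_is^{\alpha})=\dfrac{1}{1-e^{-2l_is^{\alpha}}}+\dfrac{e^{-2l_is^{\alpha}}}{1-e^{-2l_is^{\alpha}}}$ (and $\tanh$ analogously in (v)) into two pieces. Multiplying by $e^{-(l_0-2l)s^{\alpha}}$ and splitting symmetrically produces two convolutions whose numerator half-exponents are $l_0/2-l$ and $l_0/2+l_i-l$; Lemma \ref{estimate_1} on each factor then yields precisely $\exp(-2\Lambda((l_0/2-l)/t^{\alpha})^{\beta})$ and $\exp(-2\Lambda((l_0/2+l_i-l)/t^{\alpha})^{\beta})$ with prefactor $[1+L(t)/l_0][1+L(t)/l_i]$, matching the statement.

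The one point needing care — and where the hypotheses $l<l_1,l_2$ in (i)--(iii) and $2l<l_0$ in (iv)--(v) are genuinely used — is that after the symmetric split every convolution factor must have a \emph{strictly positive} numerator exponent; otherwise the $n=0$ term of the geometric series would be the constant $1$, whose inverse transform is a Dirac mass rather than an $L^1$ function, and the convolution bound would break down. The splits above produce the exponents $l_i-l>0$ and $l_0/2-l>0$, exactly guaranteed by the hypotheses, which is the reason for the symmetric (rather than lumped) distribution of the numerator. A secondary technical point is that I invoke Lemma \ref{estimate_1} with exponents not always ordered as $l_1<l_2$ assumed there; this is harmless since its proof only uses the expansion $\sum_n e^{-(l_1+nl_2)s^{\alpha}}$, valid for \emph{all} positive exponents. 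Finally, at any step where a full exponential is collapsed onto a single factor I use $2^{\beta}\ge 2$, i.e. $\beta=1/(1-\alpha)\ge1$, to pass from $\exp(-\Lambda(2c/t^{\alpha})^{\beta})$ to $\exp(-2\Lambda(c/t^{\alpha})^{\beta})$; with the symmetric splitting this factor of two in the exponent appears automatically.
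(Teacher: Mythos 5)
Your proposal is correct and follows essentially the same route as the paper's own proof: the same expansion of the hyperbolic quotients into exponential terms, the same symmetric splitting of the numerator exponential across the two denominator factors $(1-e^{-2l_is^{\alpha}})$, the convolution bound from part (ii) of Lemma \ref{SimpleLaplaceLemma}, and Lemma \ref{estimate_1} applied to each factor. Your additional remarks — that the hypotheses $l<l_1,l_2$ and $2l<l_0$ are exactly what keep the split exponents positive, and that Lemma \ref{estimate_1} needs no ordering of its two exponents — are correct and make explicit points the paper leaves implicit.
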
	
\begin{proof}
	\begin{enumerate}
		\item [(i)] Upon simplification, we get
		\begin{align*}
			&\left\|\mathcal{L}^{-1}\left\{\exp(2ls^{\alpha})\left(\frac{\cosh(l_1s^{\alpha})\cosh(l_2s^{\alpha})}{\sinh(l_1s^{\alpha})\sinh(l_2s^{\alpha})}-1\right)\right\}\right\|_{L^1(0,t)} \\
			&\leq 2\left\|\mathcal{L}^{-1}\left\{\frac{\exp(-2(l_1-l)s^{\alpha})}{(1-\exp(-2l_1s^{\alpha}))(1-\exp(-2l_2s^{\alpha}))}\right\}\right\|_{L^1(0,t)}\\ &\hspace{10mm}+ 2\left\|\mathcal{L}^{-1}\left\{\frac{\exp(-2(l_2-l)s^{\alpha})}{(1-\exp(-2l_1s^{\alpha}))(1-\exp(-2l_2s^{\alpha}))}\right\}\right\|_{L^1(0,t)}\\
			&=: I_1 + I_2.
		\end{align*}
		Now for the first part, we get by part (ii) of the Lemma \ref{SimpleLaplaceLemma}
		\begin{align*}
			I_1 &= 2\left\|\mathcal{L}^{-1}\left\{\frac{\exp(-2(l_1-l)s^{\alpha})}{(1-\exp(-2l_1s^{\alpha}))(1-\exp(-2l_2s^{\alpha}))}\right\}\right\|_{L^1(0,t)} \\
			& \leq  2\left\|\mathcal{L}^{-1}\left\{\frac{\exp(-(l_1-l)s^{\alpha})}{(1-\exp(-2l_1s^{\alpha}))}\right\}\right\|_{L^1(0,t)}\left\|\mathcal{L}^{-1}\left\{\frac{\exp(-(l_1-l)s^{\alpha})}{(1-\exp(-2l_2s^{\alpha}))}\right\}\right\|_{L^1(0,t)}.
		\end{align*}
		Similar simplification is done for the second part $I_2$. Finally, we have our result using Lemma \ref{estimate_1} .\\
		Proofs of parts (ii) and (iii) are similar to part (i), hence omitted. 
		\item [(iv)] 
		\begin{align*}
			&\left\|\mathcal{L}^{-1}\left\{\frac{\exp(2ls^{\alpha})}{\sinh(l_0s^{\alpha})}\left(\rho_1\frac{\cosh(l_1s^{\alpha})}{\sinh(l_1s^{\alpha})}-\rho_2\frac{\cosh(l_2s^{\alpha})}{\sinh(l_2s^{\alpha})}\right)\right\}\right\|_{L^1(0,t)} \\
			& \leq \rho_1\left\|\mathcal{L}^{-1}\left\{\frac{\exp(2ls^{\alpha})}{\sinh(l_0s^{\alpha})}\frac{\cosh(l_1s^{\alpha})}{\sinh(l_1s^{\alpha})}\right\}\right\|_{L^1(0,t)}+\rho_2\left\|\mathcal{L}^{-1}\left\{\frac{\exp(2ls^{\alpha})}{\sinh(l_0s^{\alpha})}\frac{\cosh(l_2s^{\alpha})}{\sinh(l_2s^{\alpha})}\right\}\right\|_{L^1(0,t)} \\
			& =: I_3 + I_4
		\end{align*}
		Now, \begin{align*}
			I_3 &= 2\rho_1\left\|\mathcal{L}^{-1}\left\{\frac{\exp(-(l_0-2l)s^{\alpha})+\exp(-(l_0+2l_1-2l)s^{\alpha})}{(1-\exp(-2l_0s^{\alpha}))(1-\exp(-2l_1s^{\alpha}))}\right\}\right\|_{L^1(0,t)} \\
			&\leq I_5 +I_6
		\end{align*}
		where
		\begin{align*}
			I_5 &= 2\rho_1\left\|\mathcal{L}^{-1}\left\{\frac{\exp(-(l_0-2l)s^{\alpha})}{(1-\exp(-2l_0s^{\alpha}))(1-\exp(-2l_1s^{\alpha}))}\right\}\right\|_{L^1(0,t)}\\
			&\leq 2\rho_1\left\|\mathcal{L}^{-1}\left\{\frac{\exp(-(l_0/2-l)s^{\alpha})}{(1-\exp(-2l_0s^{\alpha}))}\right\}\right\|_{L^1(0,t)}\left\|\mathcal{L}^{-1}\left\{\frac{\exp(-(l_0/2-l)s^{\alpha})}{(1-\exp(-2l_1s^{\alpha}))}\right\}\right\|_{L^1(0,t)}
		\end{align*}
		similarly for 
		\begin{align*}
			I_6 \leq 2\rho_1\left\|\mathcal{L}^{-1}\left\{\frac{\exp(-(l_0/2+l_1-l)s^{\alpha})}{(1-\exp(-2l_0s^{\alpha}))}\right\}\right\|_{L^1(0,t)}\left\|\mathcal{L}^{-1}\left\{\frac{\exp(-(l_0/2+l_1-l)s^{\alpha})}{(1-\exp(-2l_1s^{\alpha}))}\right\}\right\|_{L^1(0,t)}
		\end{align*}
		similar for $I_4$. Finally, using Lemma \ref{estimate_1} we have our result.
		\item [(v)] The proof is similar to part (iii) and thus omitted.
	\end{enumerate}
	\hfill\end{proof}	

With these preparation we are in position to present the following convergence theorems:
\begin{theorem}[Convergence of NNWR for sub-diffusion and diffusion-wave]
	For $\theta_i = \frac{1}{2+\sqrt{\kappa_i/\kappa_{i+1}}+\sqrt{\kappa_{i+1}/\kappa_{i}}}$, the NNWR algorithm for the fractional order $ (0<\nu<1) $ in \eqref{NNWRD}-\eqref{NNWR2} converges superlinearly with the estimate:
\begin{equation}
	\max_{1 \leq i \leq N-1}\| w_i^{(k)}\|_{L^{\infty}(0,T)}\leq
	c^{k}\exp\left(-\mu (2k)^{1/(1-\nu)}\right)\max_{1 \leq i \leq N-1}\| w_i^{(0)}\|_{L^{\infty}(0,T)},
\end{equation}
where $\mu = (1-\nu)\nu^{\nu/(1-\nu)}(h_{\min}/T^{\nu})^{1/(1-\nu)}$, and value of $c$ is given in \eqref{c_val_super}.
\end{theorem}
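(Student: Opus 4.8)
The plan is to mirror the scalar DNWR arguments of Theorems \ref{Theorem2}--\ref{Theorem3}, now carrying along the pentadiagonal coupling produced by the optimal choice $\theta_i=\theta_i^{*}$ in \eqref{NN4}--\eqref{NN6}. First I would write the substituted recurrence in matrix--vector form $\hat{\mathbf{w}}^{(k)}(s)=M(s)\,\hat{\mathbf{w}}^{(k-1)}(s)$, with $\hat{\mathbf{w}}^{(k)}=(\hat{w}_1^{(k)},\dots,\hat{w}_{N-1}^{(k)})^{\top}$ and the banded matrix $M(s)$ whose entries are exactly the coefficients multiplying $\hat{w}_{i-2}^{(k-1)},\dots,\hat{w}_{i+2}^{(k-1)}$. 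The value of substituting $\theta_i^{*}$ is that every diagonal coefficient becomes a difference of the form $\frac{\gamma_i\gamma_{i+1}}{\sigma_i\sigma_{i+1}}-1$ and every off-diagonal coefficient a ratio such as $\frac{1}{\sigma_{i+1}\sigma_{i+2}}$ or $\frac{1}{\sigma_{i+1}}\left(\frac{\gamma_{i+2}}{\sigma_{i+2}}-\frac{\gamma_i}{\sigma_i}\right)$ (up to bounded diffusion-coefficient factors); these are precisely the kernel templates whose $L^1$ norms are controlled by Lemma \ref{lam_7}(i)--(v) and Lemma \ref{estimate_1}.

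Next I would pass to the time domain. Each entry of $M(s)$ is, by the decay bounds used in proving Lemma \ref{lam_7}, the Laplace transform of an $L^1(0,T)$ kernel, so the iteration becomes a matrix of convolutions acting on $\mathbf{w}^{(k-1)}(t)$. Setting $\mathcal{E}^{(k)}:=\max_{1\le i\le N-1}\|w_i^{(k)}\|_{L^{\infty}(0,T)}$ and applying the convolution inequality of Lemma \ref{SimpleLaplaceLemma}(iii), $\|K*w\|_{L^{\infty}}\le\|K\|_{L^1}\|w\|_{L^{\infty}}$, collapses the vector recurrence to a scalar one driven by row sums of $L^1$ kernel norms, uniformly in $i$.

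The decisive step, and the one that yields superlinearity rather than a merely geometric rate, is that the $k$-fold iterate must be estimated \emph{without} splitting the convolution into $k$ separate factors. Writing $\hat{\mathbf{w}}^{(k)}=M(s)^{k}\hat{\mathbf{w}}^{(0)}$ and expanding $M(s)^{k}$, every surviving product of $k$ entries carries a power $\cosech^{k}$ or $\sech^{k}$ of a hyperbolic function, equivalently a leading factor $e^{-2k(\cdot)s^{\nu}}$, since $\frac{\gamma\gamma}{\sigma\sigma}-1=\mathcal{O}(e^{-2(\cdot)s^{\nu}})$. Bounding these $k$-th powers \emph{directly} via Lemma \ref{invL_cosech}(ii) and Lemma \ref{invL_sinh_sinh}(ii), rather than iterating Lemma \ref{lam_7} step by step, turns the accumulated offset, which grows like $2k\,h_{\min}$, into the superlinear factor $\exp(-\mu(2k)^{1/(1-\nu)})$ with $\mu=(1-\nu)\nu^{\nu/(1-\nu)}(h_{\min}/T^{\nu})^{1/(1-\nu)}$, because $(2k\,h_{\min})^{1/(1-\nu)}$ dominates $k$ for $\nu\in(0,1)$. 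The binomial/combinatorial prefactors, the $2^{k}$, the $\frac{1}{1-e^{-(\cdot)}}$ terms from Lemmas \ref{invL_cosech}--\ref{invL_sinh_sinh}, and the diffusion-coefficient constants then assemble into the geometric constant $c^{k}$ recorded in the statement.

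The main obstacle I anticipate is the bookkeeping of the multi-subdomain coupling. Since $M(s)$ is pentadiagonal, the entries of $M(s)^{k}$ are sums over many index paths, and one must verify that (a) the smallest scaled subdomain width $h_{\min}$ governs the slowest-decaying path, so that $h_{\min}$ together with the factor $2k$ appears in the exponent, and (b) the number of paths and the combinatorial weights from the $\cosech^{k}/\sech^{k}$ expansions grow at most geometrically in $k$, hence are absorbed into $c$ without degrading the superlinear exponent. Finally, I would treat the two boundary interfaces $i=1$ and $i=N-1$ separately, since rows \eqref{NN4} and \eqref{NN6} are tridiagonal rather than pentadiagonal, and take the maximum over $i$ to obtain the stated uniform estimate.
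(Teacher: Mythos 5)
Your overall architecture---the pentadiagonal matrix form of \eqref{NN4}--\eqref{NN6}, the collapse to a scalar recurrence via Lemma \ref{SimpleLaplaceLemma}(iii), and the correct observation that splitting the $k$-fold convolution into $k$ separate $L^1$ factors can only produce a geometric rate, so the accumulated offset $\sim 2kh_{\min}$ must be kept intact---matches the mechanism of the paper. However, the step you designate as decisive has a genuine gap. The entries of $M(s)^{k}$ are sums over index paths of products of \emph{heterogeneous} kernels: a typical surviving term looks like $\hat{t}_{1,2}(s)\,\hat{t}_{2,4}(s)\,\hat{t}_{4,3}(s)\cdots$, mixing different scaled widths $h_i/\sqrt{\kappa_i}$, different diffusion ratios, coth-difference kernels and $1/(\sigma_{i}\sigma_{i+1})$ kernels. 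These are \emph{not} $k$-th powers of a single function, so Lemma \ref{invL_cosech}(ii) and Lemma \ref{invL_sinh_sinh}(ii)---which are statements about $\cosech^{k}(ls^{\alpha})$ and $\sinh^{k}((l_2-l_1)s^{\alpha})/\sinh^{k}(l_2s^{\alpha})$ for one fixed width---cannot be invoked on them. Making your route rigorous would require a new multi-parameter analogue of those lemmas valid for arbitrary mixed products along pentadiagonal paths; this is exactly the bookkeeping you flag as an ``obstacle,'' but it is not a detail---it is the missing proof.

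The paper implements your intuition without ever expanding $M(s)^{k}$, and this is worth internalizing. It sets $\sigma=\exp(h_{\min}s^{\nu})$ with $h_{\min}=\min_i h_i/(2\sqrt{\kappa_i})$ and passes to the weighted unknowns $\hat{v}_i^{(k)}=\sigma^{2k}\hat{w}_i^{(k)}$, i.e., it factors $M(s)^{k}=e^{-2kh_{\min}s^{\nu}}\left(\sigma^{2}M(s)\right)^{k}$ once and for all. The weighted kernels $\hat{t}_{i,j}$ of \eqref{NNWR3} (which carry the premultiplied $\sigma^{2}$) are precisely the objects Lemma \ref{lam_7} was engineered to handle: each of its five parts bounds a kernel with the factor $\exp(2ls^{\alpha})$ built in, under $l<l_1,l_2$ or $2l<l_0$, conditions guaranteed by the definition of $h_{\min}$. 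This yields $k$-independent constants $W_{i,j}$, so iterating the convolution inequality gives a plain geometric contraction $\max_i\|v_i^{(k)}\|_{L^\infty(0,T)}\le c^{k}\max_i\|v_i^{(0)}\|_{L^\infty(0,T)}$---iterating step by step is now harmless because the superlinear content has already been removed---and a single final deconvolution $w_i^{(k)}=\phi^{2k}\ast v_i^{(k)}$ with $\phi^{2k}=\mathcal{L}^{-1}\{e^{-2kh_{\min}s^{\nu}}\}$ produces $\exp(-\mu(2k)^{1/(1-\nu)})$ from Lemma \ref{invL_exp}(iii). Note in particular that your parenthetical ``rather than iterating Lemma \ref{lam_7} step by step'' has the roles reversed: the paper does iterate Lemma \ref{lam_7} step by step, and it is safe precisely because of the exponential weighting; what it avoids is your path expansion.
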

\begin{proof}
Define $\sigma := \exp(h_{\min}s^{\nu})$ for $0<\nu<1$, where $h_{\min} = \min_{1\leq i\leq N} h_i/2/\sqrt{\kappa_i}$. Setting 
$\hat{v}^{(k)}_i(s):= \sigma^{2k}\hat{w}^{(k)}_i(s)$ reduce the equations \eqref{NN4}-\eqref{NN6} to:
\begin{align} \label{NN7}
\hat{v}^{(k)}_1(s) &= -\theta_1^{*}\left(\hat{t}_{1,1}\hat{v}^{(k-1)}_1(s)+\hat{t}_{1,2}\hat{v}^{(k-1)}_{2}(s)-\hat{t}_{1,3}\hat{v}^{(k-1)}_{3}(s)\right),  \\
\hat{v}^{(k)}_i(s) &= -\theta_i^{*}\left(\hat{t}_{i,i}\hat{v}^{(k-1)}_i(s)+\hat{t}_{i,i+1}\hat{v}^{(k-1)}_{i+1}(s)+\hat{t}_{i,i-1}\hat{v}^{(k-1)}_{i-1}(s)-\hat{t}_{i,i+2}\hat{v}^{(k-1)}_{i+2}(s)-\hat{t}_{i,i-2}\hat{v}^{(k-1)}_{i-2}(s)\right), \nonumber\\
\hat{v}^{(k)}_{N-1}(s) &= -\theta_{N-1}^{*}\left(\hat{t}_{N-1,N-1}\hat{v}^{(k-1)}_{N-1}(s)+\hat{t}_{N-1,N-2}\hat{v}^{(k-1)}_{N-2}(s)-\hat{t}_{N-1,N-3}\hat{v}^{(k-1)}_{N-3}(s)\right), \nonumber
\end{align}
where the weights are given by
\begin{gather} \label{NNWR3}
\hat{t}_{1,1} = \sigma^{2}\left(\sqrt{\frac{\kappa_1}{\kappa_{2}}}\left(\frac{\gamma_{1}\gamma_{2}}{\sigma_{1}\sigma_{2}}-1\right)+\sqrt{\frac{\kappa_2}{\kappa_{1}}}\left(\frac{\sigma_{1}\gamma_{2}}{\gamma_{1}\sigma_{2}}-1\right)\right), \,
\hat{t}_{1,2} = \frac{\sigma^{2}}{\sigma_{2}}\left(\sqrt{\frac{\kappa_3}{\kappa_{2}}}\frac{\gamma_{3}}{\sigma_{3}}-\sqrt{\frac{\kappa_2}{\kappa_{1}}}\frac{\sigma_{1}}{\gamma_{1}}\right), \\
\hat{t}_{1,3} = \frac{\sigma^{2}}{\sigma_{2}\sigma_{3}}\sqrt{\frac{\kappa_3}{\kappa_{2}}}, \nonumber 
\end{gather}
for $i = 2,\cdots,N-2$,
\begin{gather*} 
\hat{t}_{i,i} = \left(\sqrt{\frac{\kappa_i}{\kappa_{i+1}}}+\sqrt{\frac{\kappa_{i+1}}{\kappa_{i}}}\right)\sigma^{2} \left(\frac{\gamma_i \gamma_{i+1}}{\sigma_i \sigma_{i+1}}-1\right), \,
\hat{t}_{i,i+1} = \frac{\sigma^{2}}{\sigma_{i+1}}\left(\sqrt{\frac{\kappa_{i+2}}{\kappa_{i+1}}}\frac{\gamma_{i+2}}{\sigma_{i+2}}-\sqrt{\frac{\kappa_{i+1}}{\kappa_{i}}}\frac{\gamma_{i}}{\sigma_{i}}\right), \\
\hat{t}_{i,i-1} = \frac{\sigma^{2}}{\sigma_{i}}\left(\sqrt{\frac{\kappa_{i-1}}{\kappa_{i}}}\frac{\gamma_{i-1}}{\sigma_{i-1}}-\sqrt{\frac{\kappa_i}{\kappa_{i+1}}}\frac{\gamma_{i+1}}{\sigma_{i+1}}\right), \,
\hat{t}_{i,i+2} = \frac{\sigma^{2}}{\sigma_{i+1}\sigma_{i+2}}\sqrt{\frac{\kappa_{i+2}}{\kappa_{i+1}}}, \,
\hat{t}_{i,i-2} = \frac{\sigma^{2}}{\sigma_{i}\sigma_{i-1}}\sqrt{\frac{\kappa_{i-1}}{\kappa_{i}}}, \nonumber
\end{gather*}
and
\begin{gather*} 
\hat{t}_{N-1,N-1} = \sigma^{2}\left(\sqrt{\frac{\kappa_N}{\kappa_{N-1}}}\left(\frac{\gamma_{N-1}\gamma_{N}}{\sigma_{N-1}\sigma_{N}}-1\right)+\sqrt{\frac{\kappa_{N-1}}{\kappa_{N}}}\left(\frac{\sigma_{N}\gamma_{N-1}}{\gamma_{N}\sigma_{N-1}}-1\right)\right), \\
\hat{t}_{N-1,N-2} = \frac{\sigma^{2}}{\sigma_{N-1}}\left(\sqrt{\frac{\kappa_{N-2}}{\kappa_{N-1}}}\frac{\gamma_{N-2}}{\sigma_{N-2}}-\sqrt{\frac{\kappa_{N-1}}{\kappa_{N}}}\frac{\sigma_{N}}{\gamma_{N}}\right),\,
\hat{t}_{N-1,N-3} = \frac{\sigma^{2}}{\sigma_{N-1}\sigma_{N-2}}\sqrt{\frac{\kappa_{N-2}}{\kappa_{N-1}}}.
\end{gather*}
Now, we use Lemma \ref{lam_7} on \eqref{NNWR3} to obtain the following estimates:\\
for $i = 1,2,\cdots,N-1$,
\begin{align*}
	||t_{i,i}(.)||_{L^{1}(0,T)} &\leq 2\left(\sqrt{\frac{\kappa_i}{\kappa_{i+1}}}+\sqrt{\frac{\kappa_{i+1}}{\kappa_i}}\right)\left(1+\frac{D}{h_i}\right)\left(1+\frac{D}{h_{i+1}}\right)\left[e^{-2Q_i}+e^{-2Q_{i+1}}\right]=:W_{i,i}\\
\end{align*}
for $i = 1,\cdots,N-2,$
\begin{align*}
	||t_{i,i+1}(.)||_{L^{1}(0,T)} &\leq 2\left(1+\frac{D}{h_{i+1}}\right)\left[\sqrt{\frac{\kappa_{i+2}}{\kappa_{i+1}}}\left(1+\frac{D}{h_{i+2}}\right)\left(e^{-2Q_{(i+1)/2}}+e^{-2Q_{(i+1)/2,i+2}}\right)\right. \\
	&\left.\kern-\nulldelimiterspace+\sqrt{\frac{\kappa_{i+1}}{\kappa_i}}\left(1+\frac{D}{h_i}\right)\left(e^{-2Q_{(i+1)/2}}+e^{-2Q_{(i+1)/2,i}}\right)\right]=:W_{i,i+1}, \\
	||t_{i,i+2}(.)||_{L^{1}(0,T)} &\leq 4\sqrt{\frac{\kappa_{i+2}}{\kappa_{i+1}}}\left(1+\frac{D}{h_{i+1}}\right)\left(1+\frac{D}{h_{i+2}}\right)\left(e^{-(Q_{i+1}+Q_{i+2})}\right)=:W_{i,i+2},
\end{align*}
for $i = 2,\cdots,N-1,$
\begin{align*}
	||t_{i,i-1}(.)||_{L^{1}(0,T)} &\leq 2\left(1+\frac{D}{h_{i}}\right)\left[\sqrt{\frac{\kappa_{i-1}}{\kappa_{i}}}\left(1+\frac{D}{h_{i-1}}\right)\left(e^{-2Q_{i/2}}+e^{-2Q_{i/2,i-1}}\right)\right. \\
	&\left.\kern-\nulldelimiterspace+\sqrt{\frac{\kappa_{i}}{\kappa_{i+1}}}\left(1+\frac{D}{h_{i+1}}\right)\left(e^{-2Q_{i/2}}+e^{-2Q_{i/2,i+1}}\right)\right]=:W_{i,i-1}, \\
	||t_{i,i-2}(.)||_{L^{1}(0,T)} &\leq 4\sqrt{\frac{\kappa_{i-1}}{\kappa_{i}}}\left(1+\frac{D}{h_{i-1}}\right)\left(1+\frac{D}{h_{i}}\right)\left(e^{-(Q_{i-1}+Q_{i})}\right)=:W_{i,i-2},
\end{align*}
where $D = \frac{T^{\nu}\Gamma(2-\nu)}{2\mu^{(1-\nu)}}$, and we use the following notation for convenience: $Q_j =\mu \left[\frac{h_j-h}{T^{\nu}}\right]^{1/(1-\nu)}$, $Q_{j/2} = \mu\left[\frac{h_j/2 - h}{T^{\nu}}\right]^{1/(1-\nu)},$ $ Q_{j/2,k} = \mu\left[\frac{h_j/2 + h_k - h}{T^{\nu}}\right]^{1/(1-\nu)}$.
After taking the inverse Laplace transform in equations \eqref{NN7} and using the part (iii) of Lemma \ref{SimpleLaplaceLemma} we have:
\begin{align} \label{NNWR4}
	||v_1^{(k)}(.)||_{L^{\infty}(0,T)} &\leq c_1 \max_{1 \leq j \leq N-1}||v_1^{(k-1)}(.)||_{L^{\infty}(0,T)}, \nonumber\\
	||v_i^{(k)}(.)||_{L^{\infty}(0,T)} &\leq c_i \max_{1 \leq j \leq N-1}||v_j^{(k-1)}(.)||_{L^{\infty}(0,T)},  \\
	||v_{N-1}^{(k)}(.)||_{L^{\infty}(0,T)} &\leq c_{N-1} \max_{1 \leq j \leq N-1}||v_j^{(k-1)}(.)||_{L^{\infty}(0,T)}. \nonumber
\end{align}
 where
\begin{align*}
	c_1 &=  W_{1,1} + W_{1,1+1} + W_{1,1+2}, \\
	c_i &= W_{i,i} + W_{i,i-1} + W_{i,i+1} + W_{i,i-2} + W_{i,i+2}, \quad i = 2,\cdots,N-2,\\
	c_{N-1} &= W_{N-1,N-1} + W_{N-1,N-2} + W_{N-1,N-3}. \\
\end{align*}
Define for $i = 2,\cdots,N-2$
\begin{equation} \label{c_val_super}
	c := \max\{\theta^*_1c_1,\theta^*_ic_i,\theta^*_{N-1}c_{N-1}\}.
\end{equation}
Then from equations \eqref{NNWR4}, we have 
\[||v_i^{(k)}(.)||_{L^{\infty}(0,T)} \leq c \max_{1 \leq j \leq N-1}||v_j^{(k-1)}(.)||_{L^{\infty}(0,T)}\]
which further yields
\begin{equation*}
	\max_{1 \leq i \leq N-1}||v_i^{(k)}(.)||_{L^{\infty}(0,T)} \leq c^k \max_{1 \leq j \leq N-1}||v_j^{(0)}(.)||_{L^{\infty}(0,T)}.
\end{equation*}
Now since
\[ w_i^{(k)}(t) = \left(\phi^{2k}*v_i^{(k)}\right)(t) = \int_0^{\infty}\phi^{2k}(t-\tau)v_i^k(\tau)d\tau,\]
with $\phi^{k} = \mathcal{L}^{-1}\left\{\exp(-kh_{\min}s^{\nu})\right\}$ for $0<\nu <1.$
And finally using part (iii) of Lemma \ref{SimpleLaplaceLemma} and part (iii) of Lemma \ref{invL_exp} we have the proof.	
\hfill\end{proof}

From now on we define Fourier transform of $f(t,x)$ in $x$ by $\mathcal{F}\{f\}(t,\xi)$, and the Laplace transform in $t$ of $\mathcal{F}\{f\}(t,\xi)$ is as usual denoted by $\widehat{\mathcal{F}\{f\}}(s,\xi)$.
\section{Convergence of NNWR Algorithm in 2D} \label{NNWR2D_con}
For the convergence study of the NNWR \eqref{NNWRD}-\eqref{NNWR2} algorithm in two subdomains in 2D domain $\Omega = (-a,b)\times \mathbb{R}$ is divided into two non-overlapping subdomains $\Omega_1 = (-a,0)\times \mathbb{R} \textrm{ and } \Omega_2 = (0,b)\times \mathbb{R}$. For convenience, we take $\kappa(\boldsymbol{x},t)=\kappa \text{ in } \Omega$. Let, $u_1,\psi_1$ are restricted to $\Omega_{1}$ and $u_2,\psi_2$ are restricted to $\Omega_{2}$. We apply Fourier transform in $y$ and then Laplace transform in $t$ to reduce the iteration as follows:
\begin{equation}\label{NNWR2DL}
	\begin{array}{rcll}
		\begin{cases}
			(s^{2\nu}+\kappa\xi^2)\widehat{\mathcal{F}\{u_1\}}^{(k)} = \kappa\partial_{xx}\widehat{\mathcal{F}\{u_1\}}^{(k)},\\
			\widehat{\mathcal{F}\{u_1\}}^{(k)}(-a,s,\xi) = 0,\\
			\widehat{\mathcal{F}\{u_1\}}^{(k)}(0,s,\xi) = \widehat{\mathcal{F}\{w\}}^{(k-1)}(s,\xi),
		\end{cases}
	\end{array}\!\!
	\begin{array}{rcll}
		\begin{cases}
			(s^{2\nu}+\kappa\xi^2)\widehat{\mathcal{F}\{u_2\}}^{(k)} = \kappa\partial_{xx}\widehat{\mathcal{F}\{u_2\}}^{(k)},\\
			\widehat{\mathcal{F}\{u_2\}}^{(k)}(0,s,\xi) = \widehat{\mathcal{F}\{w\}}^{(k-1)}(s,\xi),\\
			\widehat{\mathcal{F}\{u_2\}}^{(k)}(b,s,\xi) = 0,
		\end{cases}
	\end{array}
\end{equation}
\begin{equation}\label{NNWR2DR}
	\begin{array}{rcll}
		\begin{cases}
			(s^{2\nu}+\kappa\xi^2)\widehat{\mathcal{F}\{\psi_1\}}^{(k)} = \kappa\partial_{xx}\widehat{\mathcal{F}\{\psi_1\}}^{(k)},\\
			\widehat{\mathcal{F}\{\psi_1\}}^{(k)}(-a,s,\xi) = 0,\\
			\partial_{x}\widehat{\mathcal{F}\{\psi_1\}}^{(k)} =
			\partial_{x}\widehat{\mathcal{F}\{u_1\}}^{(k)}-\partial_{x}\widehat{\mathcal{F}\{u_2\}}^{(k)},
		\end{cases}
	\end{array}\!\!\!\!\!\!\!\!\!
	\begin{array}{rcll}
		\begin{cases}
			(s^{2\nu}+\kappa\xi^2)\widehat{\mathcal{F}\{\psi_2\}}^{(k)} = \kappa\partial_{xx}\widehat{\mathcal{F}\{\psi_2\}}^{(k)},\\
			\partial_{x}\widehat{\mathcal{F}\{\psi_2\}}^{(k)} =
			\partial_{x}\widehat{\mathcal{F}\{u_2\}}^{(k)}-\partial_{x}\widehat{\mathcal{F}\{u_1\}}^{(k)},\\
			\widehat{\mathcal{F}\{\psi_2\}}^{(k)}(b,s,\xi) = 0,
		\end{cases}
	\end{array}
\end{equation}
followed by the updating step:
\begin{equation}\label{NNWR2DL2}
	\widehat{\mathcal{F}\{w\}}^{(k)}(s,\xi)= \widehat{\mathcal{F}\{w\}}^{(k-1)}(s,\xi) - \theta\left(\widehat{\mathcal{F}\{\psi_1\}}^{(k)}(0,\xi,s)+\widehat{\mathcal{F}\{\psi_2\}}^{(k)}(0,\xi,s)\right).
\end{equation}
After finding the solutions of \eqref{NNWR2DL}\&\eqref{NNWR2DR} and substituting in \eqref{NNWR2DL2}, we obtain
\begin{equation*}
	\widehat{\mathcal{F}\{w\}}^{(k)} = \left[1-\theta\left(2+\frac{\tanh(B\sqrt{s^{2\nu}+\kappa\xi^2})}{\tanh(A\sqrt{s^{2\nu}+\kappa\xi^2})}+\frac{\tanh(A\sqrt{s^{2\nu}+\kappa\xi^2})}{\tanh(B\sqrt{s^{2\nu}+\kappa\xi^2})}\right)\right]\widehat{\mathcal{F}\{w\}}^{(k-1)},
\end{equation*}
where, $A:=a/\sqrt{\kappa}$ and $B:=b/\sqrt{\kappa}$. Then put $\theta = 1/4$, we obtain
\begin{equation*}
	\widehat{\mathcal{F}\{w\}}^{(k)} = \frac{\sinh^2((A-B)\sqrt{s^{2\nu}+\kappa\xi^2})}{\sinh(2A\sqrt{s^{2\nu}+\kappa\xi^2})\sinh(2B\sqrt{s^{2\nu}+\kappa\xi^2})}\widehat{\mathcal{F}\{w\}}^{(k-1)},
\end{equation*}
that further leads to the relation
\begin{equation}\label{NNWR_2d_1}
	\widehat{\mathcal{F}\{w\}}^{(k)} = \frac{\sinh^{2k}((A-B)\sqrt{s^{2\nu}+\kappa\xi^2})}{\sinh^k(2A\sqrt{s^{2\nu}+\kappa\xi^2})\sinh^k(2B\sqrt{s^{2\nu}+\kappa\xi^2})}\widehat{\mathcal{F}\{w\}}^{(0)}.
\end{equation}

Before going to the main theorem, we need a few lemmas, which estimate the inverse Laplace transformed of a few special functions. 
\begin{lemma} \label{lam_8}
	For  $\eta\geq 0$ and $l>0$, the following results hold:
	\begin{enumerate}
		\item [(i)]
		for $0<\alpha \leq 1/2$,
		\begin{align*}
			&\mathcal{L}^{-1}\left\{\exp(-l\sqrt{s^{2\alpha}+\eta^2})\right\} \\ 
			&= \int_0^{\infty} \exp(- \eta^2\tau)  2\alpha\tau t^{-(2\alpha+1)} M_{\alpha}(\tau t^{-2\alpha}) \frac{l}{\sqrt{4\pi t^3}} \exp(-\frac{l^2}{4\tau}) d\tau,
		\end{align*}
		\item[(ii)]
		for $0<\alpha < 1$, 
		\begin{equation*}
			\mathcal{L}^{-1}\left\{\exp(-l\sqrt{s^{2\alpha}+\eta^2})\right\} = -\int_0^{\infty} l\alpha  t^{-(\alpha+1)} J_0(\eta\tau) \frac{d}{d\tau} M_{\alpha}(\sqrt{\tau^2+l^2} t^{-\alpha})  d\tau,
		\end{equation*}
		where, $J_0(x)$ is the Bessel function of first kind of order zero,
		\item[(iii)] 
		for $l > \alpha^{1-\alpha}t^{\alpha}/(1-\alpha)^{1-\alpha}\alpha^{\alpha}$,
		\begin{equation*}
			\int_0^{\infty} |-l\alpha  t^{-(\alpha+1)}\frac{d}{d\tau} M_{\alpha}(\sqrt{\tau^2+l^2} t^{-\alpha})|  d\tau = \mathcal{L}^{-1}\left\{\exp(-ls^{\alpha})\right\}.
		\end{equation*}
	\end{enumerate}
\end{lemma}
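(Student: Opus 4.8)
The plan is to handle the three parts through two distinct integral representations of $\exp(-l\sqrt{s^{2\alpha}+\eta^2})$, in each case rewriting the composite exponent as one to which Lemma \ref{invL_exp}(i) applies and then inverting term by term inside the $\tau$-integral. For (i) I would invoke the one-sided $1/2$-stable (L\'evy) subordination identity $e^{-l\sqrt{w}}=\int_0^\infty \frac{l}{2\sqrt{\pi}\,\tau^{3/2}}e^{-l^2/(4\tau)}e^{-\tau w}\,d\tau$, valid for $\mathrm{Re}\,w>0$. Setting $w=s^{2\alpha}+\eta^2$ and pulling the $s$-free factor $e^{-\eta^2\tau}$ out leaves only $e^{-\tau s^{2\alpha}}$, whose inverse transform is $2\alpha\tau\,t^{-(2\alpha+1)}M_{2\alpha}(\tau t^{-2\alpha})$ by Lemma \ref{invL_exp}(i); this is exactly where the restriction $\alpha\le 1/2$ enters, since that lemma needs the exponent $2\alpha$ to lie in $(0,1]$. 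To justify the term-by-term inversion I would apply the forward Laplace transform to the claimed right-hand side and use Tonelli's theorem: for $\alpha\le 1/2$ every factor is non-negative ($M_{2\alpha}\ge 0$ following from the positivity argument behind Lemma \ref{invL_exp}(ii)), so the double integral may be interchanged and the subordination identity recovers $\exp(-l\sqrt{s^{2\alpha}+\eta^2})$.

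For (ii), which must hold on the full range $0<\alpha<1$, the $1/2$-stable subordination is unavailable (it would force the exponent $2\alpha$), so I would instead use the classical Sommerfeld-type Hankel identity $\frac{e^{-l\sqrt{p^2+\eta^2}}}{\sqrt{p^2+\eta^2}}=\int_0^\infty \frac{\tau J_0(\eta\tau)}{\sqrt{\tau^2+l^2}}\,e^{-p\sqrt{\tau^2+l^2}}\,d\tau$ with $p=s^\alpha$. Differentiating in $l$ and using the elementary identity $\frac{\tau}{v^2}\big(p+\tfrac1v\big)e^{-pv}=-\partial_\tau\!\big(e^{-pv}/v\big)$ with $v=\sqrt{\tau^2+l^2}$ converts this into $e^{-l\sqrt{p^2+\eta^2}}=-l\int_0^\infty J_0(\eta\tau)\,\partial_\tau\!\big(e^{-p\sqrt{\tau^2+l^2}}/\sqrt{\tau^2+l^2}\big)\,d\tau$. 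Setting $p=s^\alpha$ and inverting under the integral with $\mathcal{L}^{-1}\{e^{-vs^\alpha}/v\}=\alpha t^{-(\alpha+1)}M_\alpha(vt^{-\alpha})$ from Lemma \ref{invL_exp}(i) (valid for every $\alpha\in(0,1)$) then yields precisely the stated derivative form, with no leftover boundary term. The interchange of $\mathcal{L}^{-1}_s$ with $\partial_\tau$ and with $\int_0^\infty d\tau$, as well as the $l$-differentiation, is the technical point; I would dominate using $|J_0|\le 1$ together with the exponential $L^1$ bounds of Lemma \ref{invL_exp}(iii) and the smoothness of $M_\alpha$.

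Part (iii) is the $\eta=0$ specialization of (ii), where $J_0(0)=1$, so the claim reduces to pulling the absolute value outside the $\tau$-integral. This is legitimate exactly when $\tau\mapsto M_\alpha(\sqrt{\tau^2+l^2}t^{-\alpha})$ is monotone, i.e.\ when $lt^{-\alpha}$ lies beyond the mode of $M_\alpha$. I would establish this directly from the representation \eqref{lam_3_3}: differentiating $x^{\alpha/(1-\alpha)}e^{-u(\phi)x^{1/(1-\alpha)}}$ in $x$ shows it is non-increasing once $x\ge (\alpha/u(\phi))^{1-\alpha}$, and the uniform lower bound $u(\phi)\ge(1-\alpha)\alpha^{\alpha/(1-\alpha)}$ from Lemma \ref{invL_exp}(ii) converts this into $x\ge \alpha^{1-2\alpha}/(1-\alpha)^{1-\alpha}$, which is exactly the hypothesis $l>\alpha^{1-\alpha}t^\alpha/((1-\alpha)^{1-\alpha}\alpha^\alpha)$. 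Hence $\partial_\tau M_\alpha(\sqrt{\tau^2+l^2}t^{-\alpha})\le 0$ throughout, the absolute value merely flips the sign, and the integral telescopes to $l\alpha t^{-(\alpha+1)}M_\alpha(lt^{-\alpha})=\mathcal{L}^{-1}\{e^{-ls^\alpha}\}$ by Lemma \ref{invL_exp}(i). I expect the main obstacle to be the rigorous justification in (ii) of interchanging the inverse Laplace transform with the improper $\tau$-integral and the $\tau$-derivative (and securing the Sommerfeld--Hankel representation for the complex argument $p=s^\alpha$); by contrast, the monotonicity threshold in (iii) falls out cleanly from Lemma \ref{invL_exp}(ii).
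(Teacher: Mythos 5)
Your proposal is correct. For parts (i) and (iii) it coincides with the paper's own argument: the L\'evy subordination identity you invoke in (i) is precisely the paper's application of the Efros theorem with $\hat f(s)=e^{-l\sqrt{s}}$, $\hat g\equiv 1$, $q(s)=s^{2\alpha}+\eta^{2}$ (your Tonelli step supplies an interchange the paper leaves implicit, and your kernel $\frac{l}{\sqrt{4\pi\tau^{3}}}e^{-l^{2}/(4\tau)}$ together with $M_{2\alpha}(\tau t^{-2\alpha})$ matches the paper's proof, quietly correcting the misprints $t^{3}$ and $M_{\alpha}$ in the paper's statement of the lemma); and your (iii) is line for line the paper's: differentiate the representation \eqref{lam_3_3}, use the bound $u(\phi)\ge(1-\alpha)\alpha^{\alpha/(1-\alpha)}$ from Lemma \ref{invL_exp}(ii) to conclude $\frac{d}{dx}M_{\alpha}(x)\le 0$ beyond the stated threshold, and telescope the integral to $l\alpha t^{-(\alpha+1)}M_{\alpha}(lt^{-\alpha})$.

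Part (ii) is where you take a genuinely different route. The paper applies the Efros theorem a second time, now with $\hat f(s)=e^{-l\sqrt{s^{2}+\eta^{2}}}$ and $q(s)=s^{\alpha}$, which forces it to work with the distributional inverse transform $\delta(t-l)-\frac{l\eta}{\sqrt{t^{2}-l^{2}}}J_{1}(\eta\sqrt{t^{2}-l^{2}})$ supported on $t\ge l$; it then recognizes the $J_{1}$ factor as $-\frac{d}{d\tau}J_{0}(\eta\sqrt{\tau^{2}-l^{2}})$, integrates by parts so that the delta contribution exactly cancels the boundary term at $\tau=l$, and substitutes $\tau\mapsto\sqrt{\tau^{2}+l^{2}}$ to reach the stated derivative form. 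You instead differentiate the Sommerfeld--Hankel identity in $l$, rewrite the integrand as a perfect $\tau$-derivative via the algebraic identity you state (which is correct), and invert under the integral using $\mathcal{L}^{-1}\{e^{-vs^{\alpha}}/v\}=\alpha t^{-(\alpha+1)}M_{\alpha}(vt^{-\alpha})$. What your route buys: no distributions and no boundary-term bookkeeping — every object along the way is a genuine function, and the derivative form appears directly rather than after integration by parts. What it costs: you must justify the Sommerfeld identity at the complex argument $p=s^{\alpha}$ (analytic continuation off the positive real axis, which you flag) and the interchange of $\mathcal{L}^{-1}$ with $\partial_{\tau}$ and with $\int_{0}^{\infty}d\tau$ — a technical burden comparable to the unproved interchanges hidden inside the paper's two invocations of the Efros theorem, so neither derivation is more rigorous as written, though yours is arguably the easier one to make airtight.
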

\begin{proof}
	\begin{enumerate}
		\item [(i)]
		We prove this result using Efros theorem, which reads:
		if, $\hat{f}(s)$ and $\hat{g}(s)\exp(-q(s)\tau)$ be the Laplace transform of $f(t)$ and $g(t,\tau)$ respectively in $t$, where $\tau$ is a parameter, then $\hat{f}(q(s))\hat{g}(s)$ is the Laplace transform of $\int_0^{\infty}g(t,\tau)f(\tau)d\tau.$\\
		Let, $\hat{f}(s) = \exp(-l\sqrt{s})$, then the inverse Laplace transform is 
		\[f(t) = \frac{l}{\sqrt{4\pi t^3}} \exp(-\frac{l^2}{4t}).\]
		Choose, $\hat{g}(s) = 1$ and $q(s) = s^{2\alpha} + \eta^2$. Then we have,
		\begin{align*}
			\mathcal{L}^{-1}\left\{\hat{g}(s)\exp(-q(s)\tau)\right\} &= \exp(-\eta^2\tau) \mathcal{L}^{-1}\left\{\exp(-\tau s^{2\alpha})\right\}   \\
			&= \exp(-\eta^2\tau)  2\alpha\tau t^{-(2\alpha +1)} M_{2\alpha}(\tau t^{-2\alpha}).
		\end{align*}
		Therefore, 
		\begin{align} \label{InvLap}
			&\mathcal{L}^{-1}\left\{\exp(-l\sqrt{s^{2\alpha}+\eta^2})\right\} \\  &=\mathcal{L}^{-1}\left\{\hat{g}(s)\hat{f}(q(s))\right\} \nonumber \\
			&= \int_0^{\infty} \exp(- \eta^2\tau)  2\alpha\tau t^{-(2\alpha+1)} M_{2\alpha}(\tau t^{-2\alpha}) \frac{l}{\sqrt{4\pi t^3}} \exp(-\frac{l^2}{4\tau}) d\tau. \nonumber
		\end{align}
		\item[(ii)] 
		Choose $\hat{f}(s) = \exp(-l\sqrt{s^2+\eta^2})$; then, the inverse Laplace transform be 
		\begin{align*}
			\begin{aligned}
				&f(t)=\left \{ \begin{array}{ll}0, &\mbox{ if } 0< t <l,\\
					\delta(t-l) -\frac{l\eta}{\sqrt{t^2-l^2}}J_1(\eta\sqrt{t^2-l^2}), &\mbox{ if } t \geq l. \end{array} \right.
			\end{aligned}
		\end{align*}
		Choose $\hat{g}(s) = 1$ and $q(s) = s^{\alpha}$; then, we obtain
		\begin{align*}
			\mathcal{L}^{-1}\left\{\hat{g}(s)\exp(-q(s)\tau)\right\} &=  \alpha\tau t^{-(\alpha +1)} M_{\alpha}(\tau t^{-\alpha}).
		\end{align*}
		Therefore, by Efros theorem, we have
		\begin{align*}
			& \mathcal{L}^{-1}\left\{\exp(-l\sqrt{s^{2\alpha}+\eta^2})\right\} \nonumber\\
			&= \mathcal{L}^{-1}\left\{\hat{g}(s)\hat{f}(q(s))\right\} \nonumber\\
			&= \int_l^{\infty}  \alpha \tau t^{-(\alpha+1)} M_{\alpha}(\tau t^{-\alpha}) \left[\delta(\tau-l) -\frac{l\eta}{\sqrt{\tau^2-l^2}}J_1(\eta\sqrt{\tau^2-l^2})\right] d\tau \nonumber\\
			&= l \alpha t^{-(\alpha+1)} M_{\alpha}(l t^{-\alpha}) + \int_l^{\infty} l\alpha t^{-(\alpha+1)} M_{\alpha}(\tau t^{-\alpha}) \frac{d}{d\tau} J_0(\eta\sqrt{\tau^2 -l^2}) d\tau \nonumber\\
			&= -\int_l^{\infty} l\alpha t^{-(\alpha+1)} J_0(\eta\sqrt{\tau^2 -l^2}) \frac{d}{d\tau} M_{\alpha}(\tau t^{-\alpha}) d\tau \nonumber\\
			&= -\int_0^{\infty} l\alpha t^{-(\alpha+1)} J_0(\eta\tau) \frac{d}{d\tau} M_{\alpha}(\sqrt{\tau^2 +l^2} t^{-\alpha}) d\tau. 
		\end{align*}
		\item [(iii)]
		From \eqref{lam_3_3}, we have
		\begin{equation*}
			M_{\alpha}(x) = \frac{x^{\alpha/(1-\alpha)}}{\pi(1-\alpha)}\int_0^{\pi}u(\phi)\exp(-u(\phi)x^{\frac{1}{1-\alpha}}) d\phi,
		\end{equation*}
		taking derivative w.r.t. $x$ gives:
		\begin{equation*}
			\frac{d}{dx}M_{\alpha}(x) = \frac{1}{\pi(1-\alpha)^2x^2}\int_0^{\pi}(\alpha-u(\phi)x^{\frac{1}{1-\alpha}})u(\phi)x^{\frac{1}{1-\alpha}}\exp(-u(\phi)x^{\frac{1}{1-\alpha}}) d\phi,
		\end{equation*}
		using part (ii) of Lemma \ref{invL_exp}, we get
		$\frac{d}{dx}M_{\alpha}(x) < 0$, when $x > \alpha^{1-\alpha}/(1-\alpha)^{1-\alpha}\alpha^{\alpha}$.
		i.e. $\frac{d}{d\tau}M_{\alpha}(\sqrt{\tau^2 +l^2} t^{-\alpha}) <0, \forall \tau$, when $l>\alpha^{1-\alpha}t^{\alpha}/(1-\alpha)^{1-\alpha}\alpha^{\alpha}$.
		Finally from part (i) of Lemma \ref{invL_exp}, we have:
		\begin{align*}
			&\int_0^{\infty} |-l\alpha t^{-(\alpha+1)} \frac{d}{d\tau} M_{\alpha}(\sqrt{\tau^2 +l^2} t^{-\alpha})| d\tau \\ 
			&= l\alpha t^{-(\alpha+1)}  M_{\alpha}(lt^{-\alpha})\\
			&= \mathcal{L}^{-1}\left\{\exp(-ls^{\alpha})\right\}. 
		\end{align*}
	\end{enumerate}
	\hfill\end{proof}

\begin{lemma} \label{estimate_9}
	For $\lambda>0$ and $0< l_1<l_2$,
	let 
	\begin{equation} \label{lam_9_1}
		\widehat{\mathcal{F}\{f_1\}}(s,\xi) = \exp(-l_1{\sqrt{s^{2\alpha}+\lambda\xi^2}})\widehat{\mathcal{F}\{f_2\}}(s,\xi).
	\end{equation}
	Then for $\Lambda = (1-\alpha) \alpha^{\alpha/(1-\alpha)}$, the following results hold:
	\begin{enumerate}
		\item [(i)] for $0<\alpha \leq 1/2$,
		\begin{equation*}
			|f_1(t,x)| \leq  \exp\left(-\Lambda\left(\frac{l_1}{t^{\alpha}}\right)^{1/(1-\alpha)}\right) \|f_2(.,.)\|_{L^{\infty}(0,t;L^{\infty})}.
		\end{equation*}
		\item [(ii)] for $1/2<\alpha<1$ and $l_1 > \alpha^{1-\alpha}t^{\alpha}/\Lambda^{\alpha}$,
		\begin{equation*}
			|f_1(t,x)| \leq  \exp\left(-\Lambda\left(\frac{l_1}{t^{\alpha}}\right)^{1/(1-\alpha)}\right) \|f_2(.,.)\|_{L^{\infty}(0,t;L^{\infty})}.
		\end{equation*}
	\end{enumerate}
\end{lemma}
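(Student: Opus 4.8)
The plan is to read \eqref{lam_9_1} as a space--time convolution $f_1 = G *_{t,x} f_2$, where the kernel $G(t,x)$ is the inverse Laplace--Fourier transform of the symbol $\exp(-l_1\sqrt{s^{2\alpha}+\lambda\xi^2})$. The whole lemma then reduces to the single assertion that $G\ge 0$: granting this, the triangle inequality together with a change of variables gives
\[
|f_1(t,x)| \le \|f_2\|_{L^{\infty}(0,t;L^{\infty})}\int_0^t\int_{\mathbb{R}}G(\tau,y)\,dy\,d\tau,
\]
and since $\int_{\mathbb{R}}G(\tau,y)\,dy$ is merely the Fourier transform of $G(\tau,\cdot)$ at $\xi=0$, i.e. $\mathcal{L}^{-1}\{\exp(-l_1 s^{\alpha})\}(\tau)$, the double integral equals $\|\mathcal{L}^{-1}\{\exp(-l_1 s^{\alpha})\}\|_{L^1(0,t)}$. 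Part (iii) of Lemma \ref{invL_exp} bounds this last quantity by $\exp(-\Lambda(l_1/t^\alpha)^{1/(1-\alpha)})$, which is exactly the claimed estimate. Thus only the positivity of $G$ remains, and the two cases differ solely in how it is obtained.

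For $0<\alpha\le 1/2$ I would invert first in $s$ by part (i) of Lemma \ref{lam_8} with $\eta^2=\lambda\xi^2$, which writes the symbol as a subordination integral in a variable $\sigma$ against the factor $e^{-\lambda\xi^2\sigma}$ and the M-Wright term $2\alpha\sigma t^{-(2\alpha+1)}M_{2\alpha}(\sigma t^{-2\alpha})$. Here the restriction $\alpha\le 1/2$ is essential: it makes $2\alpha\in(0,1]$, so $M_{2\alpha}$ is a genuine, hence non-negative, M-Wright function. Inverting the remaining $\xi$-dependence, $e^{-\lambda\xi^2\sigma}$ transforms into the positive Gaussian $(4\pi\lambda\sigma)^{-1/2}e^{-x^2/(4\lambda\sigma)}$, so $G(t,x)$ becomes an integral of manifestly non-negative factors and $G\ge 0$ follows. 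Integrating that Gaussian over $x\in\mathbb{R}$ yields $1$ and returns the $\xi=0$ value, confirming $\int_{\mathbb{R}}G(t,\cdot)=\mathcal{L}^{-1}\{\exp(-l_1 s^\alpha)\}(t)$ used above.

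For $1/2<\alpha<1$ the subordination route breaks down because $M_{2\alpha}$ is no longer sign-definite, so instead I would use part (ii) of Lemma \ref{lam_8}, which expresses the inverse symbol through the Bessel kernel $J_0(\sqrt{\lambda}\,|\xi|\sigma)$ and the derivative $\frac{d}{d\sigma}M_\alpha(\sqrt{\sigma^2+l_1^2}\,t^{-\alpha})$. The one-dimensional inverse Fourier transform of $J_0(\sqrt{\lambda}\,|\xi|\sigma)$ is the non-negative, compactly supported kernel $P_\sigma(x)=\pi^{-1}(\lambda\sigma^2-x^2)^{-1/2}\mathbbm{1}_{\{|x|<\sqrt{\lambda}\sigma\}}$ with $\int_{\mathbb{R}}P_\sigma=1$. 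The hypothesis $l_1>\alpha^{1-\alpha}t^\alpha/\Lambda^\alpha$ places us in the regime of part (iii) of Lemma \ref{lam_8}, where $\frac{d}{d\sigma}M_\alpha(\sqrt{\sigma^2+l_1^2}\,t^{-\alpha})<0$ for every $\sigma$; together with $P_\sigma\ge 0$ this yields
\[
G(t,x)=-\int_0^\infty l_1\alpha\, t^{-(\alpha+1)}P_\sigma(x)\,\frac{d}{d\sigma}M_\alpha(\sqrt{\sigma^2+l_1^2}\,t^{-\alpha})\,d\sigma\ge 0.
\]
Integrating over $x$ uses $\int_{\mathbb{R}}P_\sigma=1$ and lands precisely on the integral evaluated in part (iii) of Lemma \ref{lam_8}, again equal to $\mathcal{L}^{-1}\{\exp(-l_1 s^\alpha)\}(t)$, so the common endgame of the first paragraph finishes case (ii) as well.

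The main obstacle is the positivity of $G$, and it is genuinely delicate only in part (ii): there the symbol inverts through the oscillatory $J_0$ and the a priori sign-changing derivative of the M-Wright function, so $G\ge 0$ is recovered only because the size condition on $l_1$ forces $M_\alpha$ into its monotone-decreasing tail. In a full write-up I would justify carefully (a) the interchange of the $\sigma$-integration with the Fourier inversion in $\xi$, (b) the explicit non-negative form of the inverse transform of $J_0(\sqrt{\lambda}\,|\xi|\sigma)$ and its unit mass, and (c) that the stated threshold on $l_1$ indeed implies the monotonicity condition of Lemma \ref{lam_8}(iii), reconciling the two constant expressions; the remaining manipulations are the routine convolution and $L^1$-estimates already used throughout Section \ref{Section4}.
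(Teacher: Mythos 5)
Your proposal is correct and follows essentially the same route as the paper's own proof: part (i) inverts via the subordination formula of Lemma \ref{lam_8}(i), giving a non-negative M-Wright factor times a unit-mass Gaussian in $x$, while part (ii) uses the Bessel representation of Lemma \ref{lam_8}(ii) together with the monotone-tail condition of Lemma \ref{lam_8}(iii), and both cases finish by evaluating the kernel's total mass at $\xi=0$ (the paper phrases this via Efros' theorem) and applying Lemma \ref{invL_exp}(iii). Your write-up merely packages the paper's explicit Fubini computation as ``kernel positivity plus unit mass,'' which is the same argument in cleaner form.
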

\begin{proof}
	\begin{enumerate}
		\item [(i)]
		Taking inverse Laplace transform on both sides of \eqref{lam_9_1} and using part (i) of Lemma \ref{lam_8}, we have	
		\begin{align}\label{lam_9_2}
			\mathcal{F}\{f_1\}(t,\xi) &= \mathcal{L}^{-1}\left\{\exp(-l_1\sqrt{s^{2\alpha}+\lambda\xi^2})\right\}\ast\mathcal{F}\{f_2\}(t,\xi) \\
			&=\left[\mathcal{L}^{-1}\left\{\exp(-l_1\sqrt{s^{2\alpha}+\lambda\xi^2})\right\}\right]\ast\mathcal{F}\{f_2\}(t,\xi) \nonumber\\
			&=\left[\int_0^{\infty}\exp(-\lambda\xi^2\tau)F(\tau,t)d\tau\right]\ast\mathcal{F}\{f_2\}(t,\xi), \nonumber
		\end{align}
		where, $F(\tau,t) = 2\alpha\tau t^{-(2\alpha+1)} M_{\alpha}(\tau t^{-2\alpha}) \frac{l_1}{\sqrt{4\pi t^3}} \exp(-\frac{l_1^2}{4\tau})$.
		Taking inverse Fourier on both sides of \eqref{lam_9_2} we have
		\begin{align*}
			&f_1(t,x)\\ &= \frac{1}{2\pi}\int_{-\infty}^{\infty}d\xi\exp(i\xi x)\int_0^{t}d\tau_1\int_0^{\infty}d\tau\exp(-\lambda\xi^2\tau)F(\tau,\tau_1)\mathcal{F}\{f_2\}(t-\tau_1,\xi)\\
			&= \frac{1}{2\pi}\int_0^{t}d\tau_1\int_0^{\infty}d\tau F(\tau,\tau_1)\int_{-\infty}^{\infty}d\xi\exp(i\xi x)\exp(-\lambda\xi^2\tau)\mathcal{F}\{f_2\}(t-\tau_1,\xi)\\
			&= \frac{1}{2\pi}\int_0^{t}d\tau_1\int_0^{\infty}d\tau F(\tau,\tau_1)\int_{-\infty}^{\infty}d\xi\exp(i\xi x)\exp(-\lambda\xi^2\tau)\int_{-\infty}^{\infty}dy\exp(-i\xi y)f_2(t-\tau_1,y)\\
			&= \frac{1}{2\pi}\int_0^{t}d\tau_1\int_0^{\infty}d\tau F(\tau,\tau_1)\int_{-\infty}^{\infty}dy f_2(t-\tau_1,y)\int_{-\infty}^{\infty}d\xi\exp(i\xi (x-y))\exp(-\lambda\xi^2\tau)\\	
			&= \int_0^{t}d\tau_1\int_0^{\infty}d\tau F(\tau,\tau_1)\int_{-\infty}^{\infty}dyf_2(t-\tau_1,y)\frac{\exp(-(x-y)^2/4\lambda\tau)}{\sqrt{4\pi\lambda\tau}}.\\	
		\end{align*}
		As $F(\tau,\tau_1)>0$ for $\tau,\tau_1>0$, so taking absolute value on both sides we have
		\begin{align*}
			|f_1(t,x)| &\leq \int_0^{t}d\tau_1\int_0^{\infty}d\tau F(\tau,\tau_1)\int_{-\infty}^{\infty}dy|f_2(t-\tau_1,y)|\frac{\exp(-(x-y)^2/4\lambda\tau)}{\sqrt{4\pi\lambda\tau}}\\
			&\leq \int_0^{t}d\tau_1\int_0^{\infty}d\tau F(\tau,\tau_1)\|f_2(t-\tau_1,.)\|_{L^{\infty}}\int_{-\infty}^{\infty}dy\frac{\exp(-(x-y)^2/4\lambda\tau)}{\sqrt{4\pi\lambda\tau}}\\	
			&\leq \int_0^{t}d\tau_1\int_0^{\infty}d\tau F(\tau,\tau_1)\|f_2(t-\tau_1,.)\|_{L^{\infty}}.
		\end{align*}
		From Efros theorem we know 
		\begin{equation*}
			\int_0^{\infty} F(\tau,\tau_1) d\tau = \mathcal{L}^{-1}\left\{\exp(-l_1s^{\alpha})\right\},
		\end{equation*}
		which gives 
		\begin{equation*}
			|f_1(t,x)| \leq \|f_2(.,.)\|_{L^{\infty}(0,t;L^{\infty})}\int_0^{t}d\tau_1\mathcal{L}^{-1}\left\{\exp(-l_1s^{\alpha})\right\}.
		\end{equation*}
		Therefore from part (iii) of Lemma \ref{invL_exp} we have:
		\begin{equation*}
			|f_1(t,x)| \leq \exp\left(-\Lambda\left(\frac{l_1}{t^{\alpha}}\right)^{1/(1-\alpha)}\right) \|f_2(.,.)\|_{L^{\infty}(0,t;L^{\infty})}.
		\end{equation*}
		\item [(ii)]
		Taking inverse Laplace transform on both sides of \eqref{lam_9_1} and using part (ii) of Lemma \ref{lam_8}, we have		
		\begin{align*}
			\mathcal{F}\{f_1\}(t,\xi) &= \mathcal{L}^{-1}\left\{\exp(-(l_1+nl_2)\sqrt{s^{2\alpha}+\xi^2})\right\}\ast\mathcal{F}\{f_2\}(t,\xi)\\
			&=\left[\mathcal{L}^{-1}\left\{\exp(-(l_1+nl_2)\sqrt{s^{2\alpha}+\xi^2})\right\}\right]\ast\mathcal{F}\{f_2\}(t,\xi)\\
			&=\left[\int_0^{\infty}J_0(\xi\tau)F(\tau,t)d\tau\right]\ast\mathcal{F}\{f_2\}(t,\xi),
		\end{align*}
		where, $F(\tau,t) = -(l_1+nl_2)\alpha t^{-(\alpha+1)} \frac{d}{d\tau} M_{\alpha}(\sqrt{\tau^2 +(l_1+nl_2)^2} t^{-\alpha})$.
		Taking inverse Fourier on both sides, we get
		\begin{align*}
			&f_1(t,x)\\ 
			&= \frac{1}{2\pi}\int_{-\infty}^{\infty}d\xi\exp(i\xi x)\int_0^{t}d\tau_1\int_0^{\infty}d\tau J_0(\xi\tau)F(\tau,\tau_1)\mathcal{F}\{f_2\}(t-\tau_1,\xi)\\
			&= \frac{1}{2\pi}\int_0^{t}d\tau_1\int_0^{\infty}d\tau F(\tau,\tau_1)\int_{-\infty}^{\infty}d\xi\exp(i\xi x)J_0(\xi\tau)\mathcal{F}\{f_2\}(t-\tau_1,\xi)\\
			&= \frac{1}{2\pi}\int_0^{t}d\tau_1\int_0^{\infty}d\tau F(\tau,\tau_1)\int_{-\infty}^{\infty}d\xi\exp(i\xi x)J_0(\xi\tau)\int_{-\infty}^{\infty}dy\exp(-i\xi y)f_2(t-\tau_1,y)\\
			&= \frac{1}{2\pi}\int_0^{t}d\tau_1\int_0^{\infty}d\tau F(\tau,\tau_1)\int_{-\infty}^{\infty}dyf_2(t-\tau_1,y)\int_{-\infty}^{\infty}d\xi\exp(i\xi (x-y))J_0(\xi\tau)\\	
			&= \frac{1}{\pi}\int_0^{t}d\tau_1\int_0^{\infty}d\tau F(\tau,\tau_1)\int_{-\infty}^{\infty}dyf_2(t-\tau_1,y)\frac{\mathbbm{1}_{-\tau<x-y<\tau}}{\sqrt{\tau^2 - (x-y)^2}}.\\	
		\end{align*}
		Taking absolute value on both sides give:
		\begin{align*}
			|f_1(t,x)| &\leq \frac{1}{\pi}\int_0^{t}d\tau_1\int_0^{\infty}d\tau |F(\tau,\tau_1)|\int_{-\infty}^{\infty}dy|f_2(t-\tau_1,y)|\frac{\mathbbm{1}_{-\tau<x-y<\tau}}{\sqrt{\tau^2 - (x-y)^2}}\\
			&\leq \frac{1}{\pi}\int_0^{t}d\tau_1\int_0^{\infty}d\tau |F(\tau,\tau_1)|\|f_2(t-\tau_1,.)\|_{L^{\infty}}\int_{-\infty}^{\infty}dy\frac{\mathbbm{1}_{-\tau<x-y<\tau}}{\sqrt{\tau^2 - (x-y)^2}}\\	
			&\leq \int_0^{t}d\tau_1\int_0^{\infty}d\tau |F(\tau,\tau_1)|\|f_2(t-\tau_1,.)\|_{L^{\infty}}.
		\end{align*}
		Now, using part (iii) of Lemma \ref{lam_8} and for $l_1 > \alpha^{1-\alpha}t^{\alpha}/(1-\alpha)^{1-\alpha}\alpha^{\alpha}$, we get
		\begin{equation*}
			|f_1(t,x)| \leq \|f_2(.,.)\|_{L^{\infty}(0,t;L^{\infty})}\left( \int_0^{t}d\tau_1 \mathcal{L}^{-1}\left\{\exp(-(l_1+nl_2)s^{\alpha})\right\}\right).
		\end{equation*}
		Hence from part (iii) of Lemma \ref{invL_exp}, we obtain
		\begin{equation*}
			|f_1(t,x)| \leq \exp\left(-\Lambda \left(\frac{l_1}{t^{\alpha}}\right)^{1/(1-\alpha)}\right) \|f_2(.,.)\|_{L^{\infty}(0,t;L^{\infty})}.
		\end{equation*}
	\end{enumerate}
	\hfill\end{proof}

For the sake of convenience, we have considered for this theorem the notation $\|.\|$ for $\|.\|_{L^{\infty}(0,T;L^{\infty})}$.
 \begin{theorem}[Convergence of NNWR in 2D]
 	For $\theta = 1/4$, the NNWR algorithm in 2D for sub-diffusion problem converges superlinearly with the estimate:
 	\begin{equation*}
 		\|w^{(k)}\| \leq \left[\frac{\left(1+\exp\left(-P(2|B-A|)^{1/(1-\nu)}\right)\right)^2}{\left(1-\exp(-PEF)\right)\left(1-\exp(-PEH)\right)}\right]^k \exp(-2PEk^{1/(1-\nu)})\|w^{(0)}\|,
 	\end{equation*}
where, $E = (2\min(A,B))^{1/(1-\nu)}$, $F = \left[(2\max(A,B)/\min(A,B) + k)^c - k^c\right]^{1/c(1-\nu)}$, $H = \left[(2 + k)^c - k^c\right]^{1/c(1-\nu)}$, $c = \floor*{\frac{1}{1-\nu}}$, and $P = (1-\nu)(\nu/t)^{\nu/(1-\nu)}$.
A similar estimate holds for diffusion wave case for all $k >K$ s.t. $kB >\nu^{1-\nu}t^{\nu}/(1-\nu)^{1-\nu}\nu^{\nu}$. 
 \end{theorem}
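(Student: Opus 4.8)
The plan is to transcribe the one–dimensional argument of Theorem~\ref{Theorem3} to the present setting, the sole new ingredient being Lemma~\ref{estimate_9}: it guarantees that passing from the scalar symbol $e^{-ls^{\nu}}$ to its two–dimensional analogue $e^{-l\sqrt{s^{2\nu}+\kappa\xi^{2}}}$ preserves the decay estimate $e^{-P\,l^{1/(1-\nu)}}$, now measured in the norm $\|\cdot\|=\|\cdot\|_{L^{\infty}(0,T;L^{\infty})}$ after inverting both transforms. First I would note that the symbol in \eqref{NNWR_2d_1} is symmetric under $A\leftrightarrow B$, so I may assume $A\ge B$, whence $\min(A,B)=B$ and $\max(A,B)=A$; writing $z:=\sqrt{s^{2\nu}+\kappa\xi^{2}}$, \eqref{NNWR_2d_1} becomes $\widehat{\mathcal F\{w\}}^{(k)}=K^{k}\,\widehat{\mathcal F\{w\}}^{(0)}$ with $K=\sinh^{2}((A-B)z)/(\sinh(2Az)\sinh(2Bz))$, and the whole task is to bound the inverse transform of $K^{k}$ applied to $w^{(0)}$.

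Next I would factor $K^{k}=\dfrac{\sinh^{2k}((A-B)z)}{\sinh^{k}(2Az)}\,\cosech^{k}(2Bz)$ and expand into an absolutely convergent series of pure exponentials in $z$, exactly as in the proofs of Lemma~\ref{invL_cosech} and Lemma~\ref{invL_sinh_sinh}: the finite binomial expansion of $\sinh^{2k}((A-B)z)$ furnishes an index $j\in\{0,\dots,2k\}$, while the binomial series for $\cosech^{k}(2Az)$ and $\cosech^{k}(2Bz)$ furnish $m,m'\ge 0$. After the cancellations between numerator and denominator, every surviving term is $b_{j,m,m'}e^{-l_{j,m,m'}z}$ with
\[
 l_{j,m,m'}=4kB+2j(A-B)+4Am+4Bm'>0,\qquad
 b_{j,m,m'}=(-1)^{j}\binom{2k}{j}\binom{m+k-1}{m}\binom{m'+k-1}{m'}.
\]
The crucial point is that \emph{every} length is strictly positive, which is precisely the hypothesis under which Lemma~\ref{estimate_9} is applicable.

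I would then bound each exponential by Lemma~\ref{estimate_9} — part~(i) for the sub--diffusion range $0<\nu\le 1/2$, and part~(ii) for the diffusion--wave range $1/2<\nu<1$, the latter valid only for lengths exceeding the threshold of Lemma~\ref{lam_8}(iii); since $l_{j,m,m'}\ge 4kB$, it holds simultaneously for all terms once $k>K$ with $kB>\nu^{1-\nu}t^{\nu}/((1-\nu)^{1-\nu}\nu^{\nu})$, the origin of the restriction in the diffusion--wave case. Each term is thereby dominated by $|b_{j,m,m'}|\,e^{-P\,l_{j,m,m'}^{1/(1-\nu)}}\|w^{(0)}\|$, reducing everything to $\sum_{j,m,m'}|b_{j,m,m'}|e^{-P\,l_{j,m,m'}^{1/(1-\nu)}}$. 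To evaluate it I would split $l_{j,m,m'}=l^{(1)}+l^{(2)}$, $l^{(1)}=2kB+2j(A-B)+4Am$, $l^{(2)}=2kB+4Bm'$, use the superadditivity $(x+y)^{1/(1-\nu)}\ge x^{1/(1-\nu)}+y^{1/(1-\nu)}$ (valid since $1/(1-\nu)\ge1$) to factor the triple sum into a $(j,m)$--sum times an $m'$--sum, and apply the refined inequality \eqref{lam_4_2} (in the rescaled form $(k+\rho m)^{1/(1-\nu)}\ge k^{1/(1-\nu)}+m[(\rho+k)^{c}-k^{c}]^{1/c(1-\nu)}$) to each. The $m'$--sum, handled as in Lemma~\ref{invL_cosech}(ii), produces $(1-e^{-PEH})^{-k}e^{-PEk^{1/(1-\nu)}}$ with $H=[(2+k)^{c}-k^{c}]^{1/c(1-\nu)}$; in the $(j,m)$--sum the $m$--geometric series (with $\rho=2A/B$) produces $(1-e^{-PEF})^{-k}e^{-PEk^{1/(1-\nu)}}$ with $F=[(2A/B+k)^{c}-k^{c}]^{1/c(1-\nu)}$, while the finite binomial $j$--sum produces, via $\sum_{j}\binom{2k}{j}e^{-Pj(2(A-B))^{1/(1-\nu)}}=(1+e^{-P(2|A-B|)^{1/(1-\nu)}})^{2k}$, the numerator. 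Multiplying the two pieces yields exactly the stated bound, with $E=(2\min(A,B))^{1/(1-\nu)}$ and $P=(1-\nu)(\nu/t)^{\nu/(1-\nu)}$.

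The main obstacle is not this combinatorial bookkeeping, which is a near--verbatim copy of the one–dimensional estimates, but the transfer step Lemma~\ref{estimate_9}. For $\nu\le 1/2$ the Efros--theorem representation of Lemma~\ref{lam_8}(i) has a positive kernel, so the absolute value passes inside the integral and the transverse Gaussian integrates to one; for $\nu>1/2$ the $M$--Wright derivative appearing in Lemma~\ref{lam_8}(ii)--(iii) changes sign and is monotone only beyond the threshold, which is exactly why the diffusion--wave estimate must be restricted to $k>K$. Guaranteeing that this threshold is met \emph{uniformly} over the infinitely many exponentials $e^{-l_{j,m,m'}z}$, rather than for a single one, is the delicate part, and it is secured by the uniform lower bound $l_{j,m,m'}\ge 4kB$.
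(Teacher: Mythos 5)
Your proposal is correct and follows essentially the same route as the paper's proof: the paper likewise expands the kernel of \eqref{NNWR_2d_1} into pure exponentials $e^{-l\sqrt{s^{2\nu}+\kappa\xi^2}}$ via the binomial and geometric series, applies Lemma \ref{estimate_9} termwise (part (i) for $\nu\le 1/2$, part (ii) with the $kB$-threshold for $\nu>1/2$), and resums using superadditivity of $x\mapsto x^{1/(1-\nu)}$ together with \eqref{lam_4_2}. The only organizational difference is that the paper runs the estimate in two stages through the auxiliary variable $\widehat{\mathcal{F}\{v\}}^{(k)}=\sinh^k\bigl(2B\sqrt{s^{2\nu}+\kappa\xi^2}\bigr)\widehat{\mathcal{F}\{w\}}^{(k)}$ and treats $A>B$ and $B>A$ as separate cases rather than invoking the $A\leftrightarrow B$ symmetry, whereas you expand the full kernel in a single triple series; the termwise bounds and the resulting estimate are identical.
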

\begin{proof}
	When $A>B$, we choose  $\widehat{\mathcal{F}\{v\}}^{(k)} := \sinh^k(2B\sqrt{s^{2\nu}+\kappa\xi^2})\widehat{\mathcal{F}\{w\}}^{(k)}$; therefore, equation \eqref{NNWR_2d_1} reduce to:
	\begin{align} \label{2DNN3}
		&\widehat{\mathcal{F}\{v\}}^{(k)}  \\ 
		&=\frac{\sinh^{2k}((A-B)\sqrt{s^{2\nu}+\kappa\xi^2})}{\sinh^k(2A\sqrt{s^{2\nu}+\kappa\xi^2})}\widehat{\mathcal{F}\{v\}}^{(0)}  \nonumber\\
		&= \frac{1}{2^k}\sum_{i=0}^{2k}(-1)^i \binom{2k}{i}\sum_{m=0}^{\infty}\binom{m+k-1}{m}\mathcal{L}^{-1}\{\exp(-2(2mA+kB+i(A-B))\sqrt{s^{2\nu}+\kappa\xi^2})\}\widehat{\mathcal{F}\{v\}}^{(0)}.\nonumber
	\end{align}
When $0< \nu \leq 1/2$, using the part (i) of Lemma \ref{estimate_9} and $P = (1-\nu)(\nu/t)^{1/(1-\nu)}$ on equation \eqref{2DNN3} gives
 \begin{align} \label{2DNN1}
	&|v^{(k)}| \\
    &\leq \frac{1}{2^k}\sum_{i=0}^{2k} \binom{2k}{i}\sum_{m=0}^{\infty}\binom{m+k-1}{m}\exp(-P(2(2mA+kB+i(A-B)))^{1/(1-\nu)})\|v^{(0)}\| \nonumber\\
	& \leq \frac{1}{2^k}\sum_{i=0}^{2k}\exp(-P(2(i(A-B)))^{1/(1-\nu)})\sum_{m=0}^{\infty}\binom{m+k-1}{m}\exp(-P(2(2mA+kB))^{1/(1-\nu)})\|v^{(0)}\|. \nonumber
\end{align}
Using \eqref{lam_4_2} we obtain:
\begin{align*}
	|v^{(k)}| \leq \frac{1}{2^k}\left[1+\exp\left(-P(2(A-B))^{1/(1-\nu)}\right)\right]^{2k}\frac{\exp(-PE_1k^{1/(1-\nu)})}{\left(1-\exp(-PE_1F_1)\right)^k}\|v^{(0)}\|, \nonumber
\end{align*}
where, $E_1 = (2B)^{1/(1-\nu)}$ and $F_1 = \left[(2A/B + k)^c - k^c\right]^{1/c(1-\nu)}$.\\
When $1/2< \nu \leq 1$, using the part (ii) of Lemma \ref{estimate_9} and $P = (1-\nu)(\nu/t)^{1/(1-\nu)}$ and $\exists k > K $ s.t. for $kB >\nu^{1-\nu}t^{\nu}/(1-\nu)^{1-\nu}\nu^{\nu}$ similar result as \eqref{2DNN1} holds for \eqref{2DNN3}.
Now $\hat{\tilde{w}}^{(k)} = \frac{1}{\sinh^k(2B\sqrt{s^{2\nu}+\kappa\xi^2})}\hat{\tilde{v}}^{(k)}$.
Using the similar estimate, we obtain
\begin{equation} \label{2DNN2}
	|w^{(k)}| \leq \left[\frac{2}{1-\exp(-PE_1H)}\right]^k \exp(-PEk^{1/(1-\nu)})\|v^{(k)}\|,
\end{equation}
where, $H = \left[(2 + k)^c - k^c\right]^{1/c(1-\nu)}$.
Finally, combining \eqref{2DNN1} and \eqref{2DNN2} and taking the norm on left hand side we get:
\begin{equation} \label{2DNN4}
	\|w^{(k)}\| \leq \left[\frac{\left(1+\exp\left(-P(2(A-B))^{1/(1-\nu)}\right)\right)^2}{\left(1-\exp(-PE_1F_1)\right)\left(1-\exp(-PE_1H)\right)}\right]^k \exp(-2PE_1k^{1/(1-\nu)})\|w^{(0)}\|.
\end{equation}
Similarly, for $B>A$ we can prove
\begin{equation}\label{2DNN5}
	\|w^{(k)}\| \leq \left[\frac{\left(1+\exp\left(-P(2(B-A))^{1/(1-\nu)}\right)\right)^2}{\left(1-\exp(-PE_2F_2)\right)\left(1-\exp(-PE_2H)\right)}\right]^k \exp(-2PE_2k^{1/(1-\nu)})\|w^{(0)}\|,
\end{equation}
where, $E_2 = (2A)^{1/(1-\nu)}$ and $F_2 = \left[(2B/A + k)^c - k^c\right]^{1/c(1-\nu)}$.
Finally, combining \eqref{2DNN4} \& \eqref{2DNN5} we have the proof for all $k \in \mathbb{N}$ for subdiffusion case and for all $k >K$ s.t. $kB >\nu^{1-\nu}t^{\nu}/(1-\nu)^{1-\nu}\nu^{\nu}$ for diffusion wave case.
\hfill\end{proof}

\section{Numerical Experiments}\label{Section7}
We now perform the numerical experiments to measure the convergence rate and verify the optimized relaxation parameter for DNWR and NNWR algorithms for the model problem \cite{sun2006fully}:
\begin{equation}\label{NumericalModelProblem}
\begin{cases}
  D^{2\nu}_{t}u-\nabla\cdot\left(\kappa(\boldsymbol{x})\nabla u\right) = f(\boldsymbol{x}), \qquad& (\boldsymbol{x},t)\in\Omega\times (0,T),\\
  u(\boldsymbol{x},t)=0, & (\boldsymbol{x},t) \in \partial\Omega \times (0,T), \\
  u(\boldsymbol{x},0)=g(\boldsymbol{x}), \; \partial_t u(\boldsymbol{x},0) = 0, & \boldsymbol{x}\in\Omega.
\end{cases}
\end{equation}
In the sub-diffusion case, we discretize \eqref{NumericalModelProblem} using central finite difference in space and $L1$ scheme on graded mesh, \cite{stynes2017error} for the Caputo fractional time derivative. In the diffusion-wave case, we use a fully discrete difference scheme; see \cite{sun2006fully}. We choose the spatial grid according to the subdomain sizes. When the spatial grid sizes are different for subdomains overlapping grid is used for the ghost points. Thus one can incorporate different physics of a problem on different subdomains. For more details see \cite{henshaw2009composite,giles1997stability}

\subsection{DNWR Algorithm}
We have taken the model problem ~\eqref{NumericalModelProblem} with $F(x) = \sin(\pi x/2)$ and $g(x) = 0$ on the spatial domain $\Omega = (0,2)$ and for the time window $T = 1$. The spatial grid size is $\Delta x = 0.01$, and the number of temporal grids is $2^6$ as we have chosen graded mesh, so the temporal grid size varies for the sub-diffusion case. Our experiments will illustrate the DNWR method for two subdomain cases.

In Figure~\ref{NumFig01}, we compare the convergence rate of the DNWR for different values of $\theta$ for fractional order $2\nu = 0.5$. we consider On the left, $a= 0.5, b = 1.5$, and on the middle, $a=b= 1$, and on the right $a= 1.5, b= 0.5$. We run the same set of experiments in ~\ref{NumFig02} where $2\nu = 1$ on the left and $2\nu = 1.5$ on the right, respectively, for $a<b$. The $a>b$ case behaves similarly to $a<b$. In Figure~\ref{NumFig03}, we run the same set of experiments for the heterogeneous space grid, choosing $\Delta x_1 = 0.01$ and $\Delta x_2 = 0.005$ with diffusion parameter $\kappa_{1} = 1$ and $\kappa_{2} = 0.25$. Comparing these plots, we can say that $\theta = 1/(1+\sqrt{\kappa_{1}/\kappa_{2}})$  may not always be an optimal convergence rate up to tolerance, but it gives superlinear one. For more details, see \cite{mandal2021substructuring}.

In Figure~\ref{NumFig05}, we compare the convergence rates for different values of the fractional order. We can see that the larger the fractional order, the faster the convergence. 

In Figures~\ref{NumFig06}, ~\ref{NumFig07}, ~\ref{NumFig08} and  ~\ref{NumFig09}, we compare the numerical convergence rate, theoretical convergence rate, and superlinear error bound for sub-diffusion and super-diffusion case, choosing  $\kappa_{1} = 1$ and $\kappa_{2} = 0.25$. In Figure~\ref{NumFig06} and ~\ref{NumFig07}, we show the comparison for  $a>b$, that corresponds to the case of $A = 1.5$, $B = 1$, $A > B$ in Theorem \ref{Theorem2}. We consider the initial guess $h^{(0)}(t)=1, t\in(0,T]$. 

In Figure~\ref{NumFig08} and ~\ref{NumFig09}, we repeat the experiments by swapping the roles of the two subdomains so that $a<b$. This corresponds to $A=0.5$ and $B=3$, as in Theorem~\ref{Theorem3}. The diffusion coefficients and initial guesses are the same as earlier.
\begin{figure}
  \centering
  \includegraphics[width=0.30\textwidth]{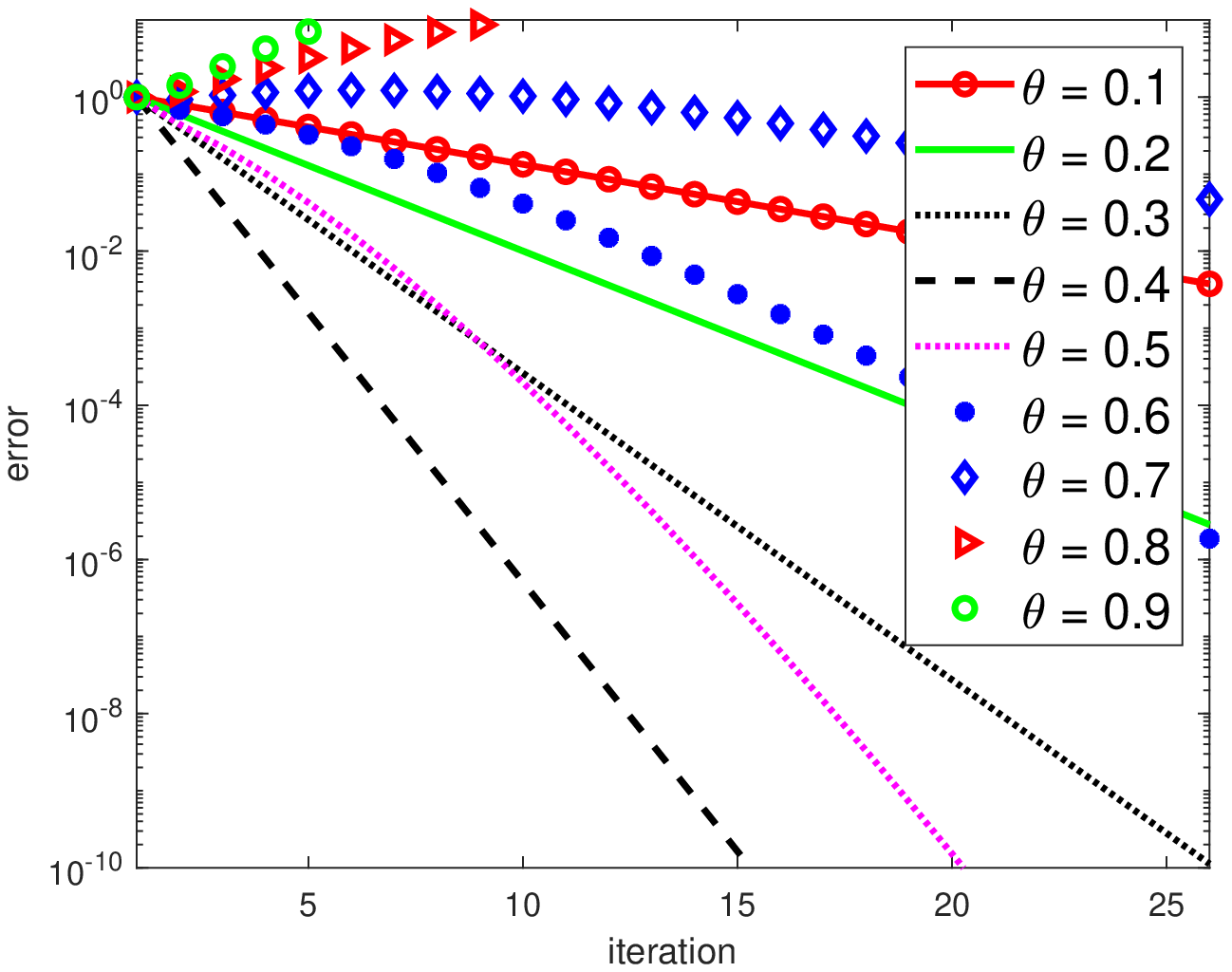}
  \includegraphics[width=0.30\textwidth]{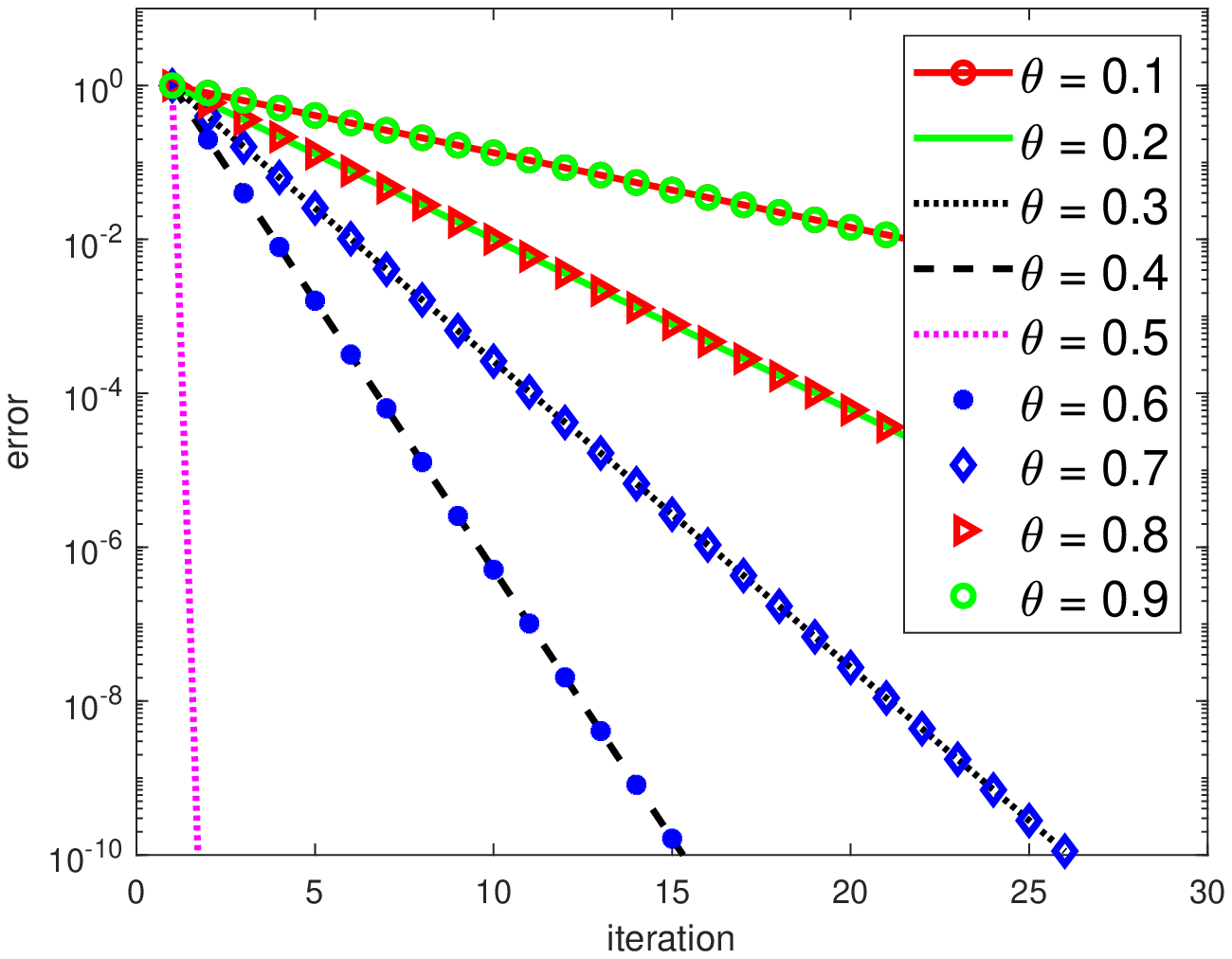}
  \includegraphics[width=0.30\textwidth]{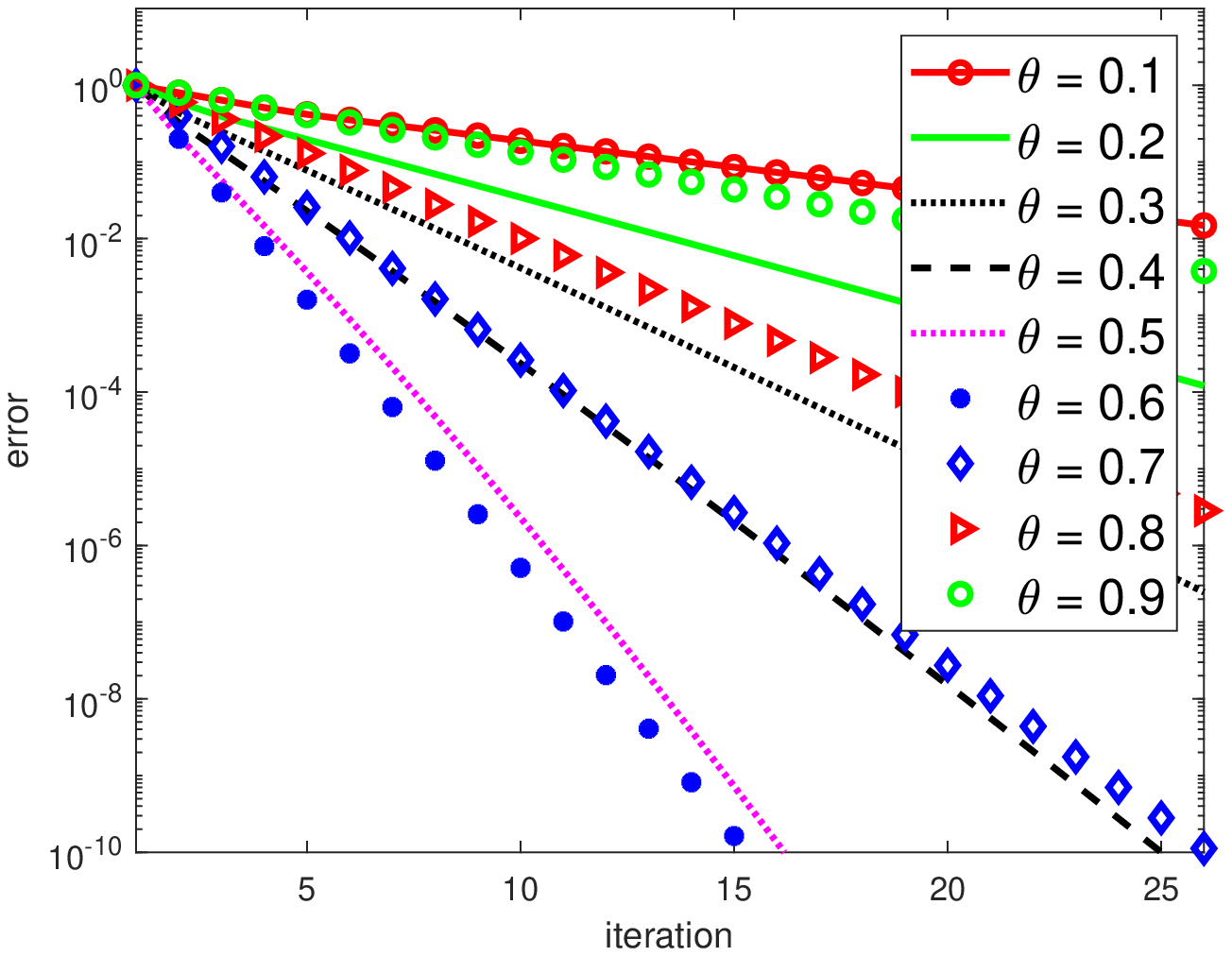}
  \caption{Convergence of DNWR for $2\nu=0.5$ for various relaxation
    parameters $\theta$ for $T=1$, on the left for $a<b$ at the middle $a=b$ and on the right for $a>b$}
  \label{NumFig01}
\end{figure}
\begin{figure}
  \centering
  \includegraphics[width=0.45\textwidth]{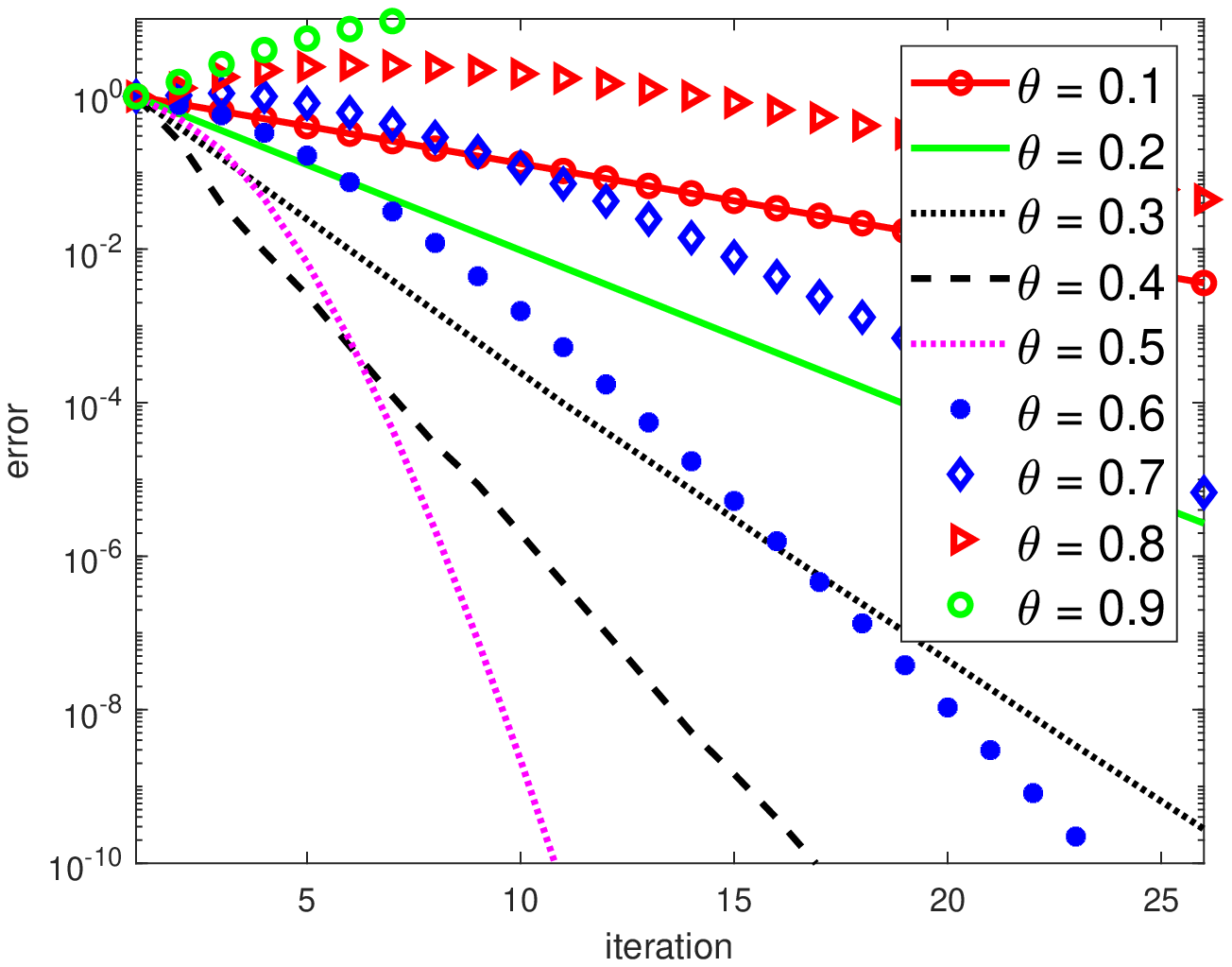}
  \includegraphics[width=0.45\textwidth]{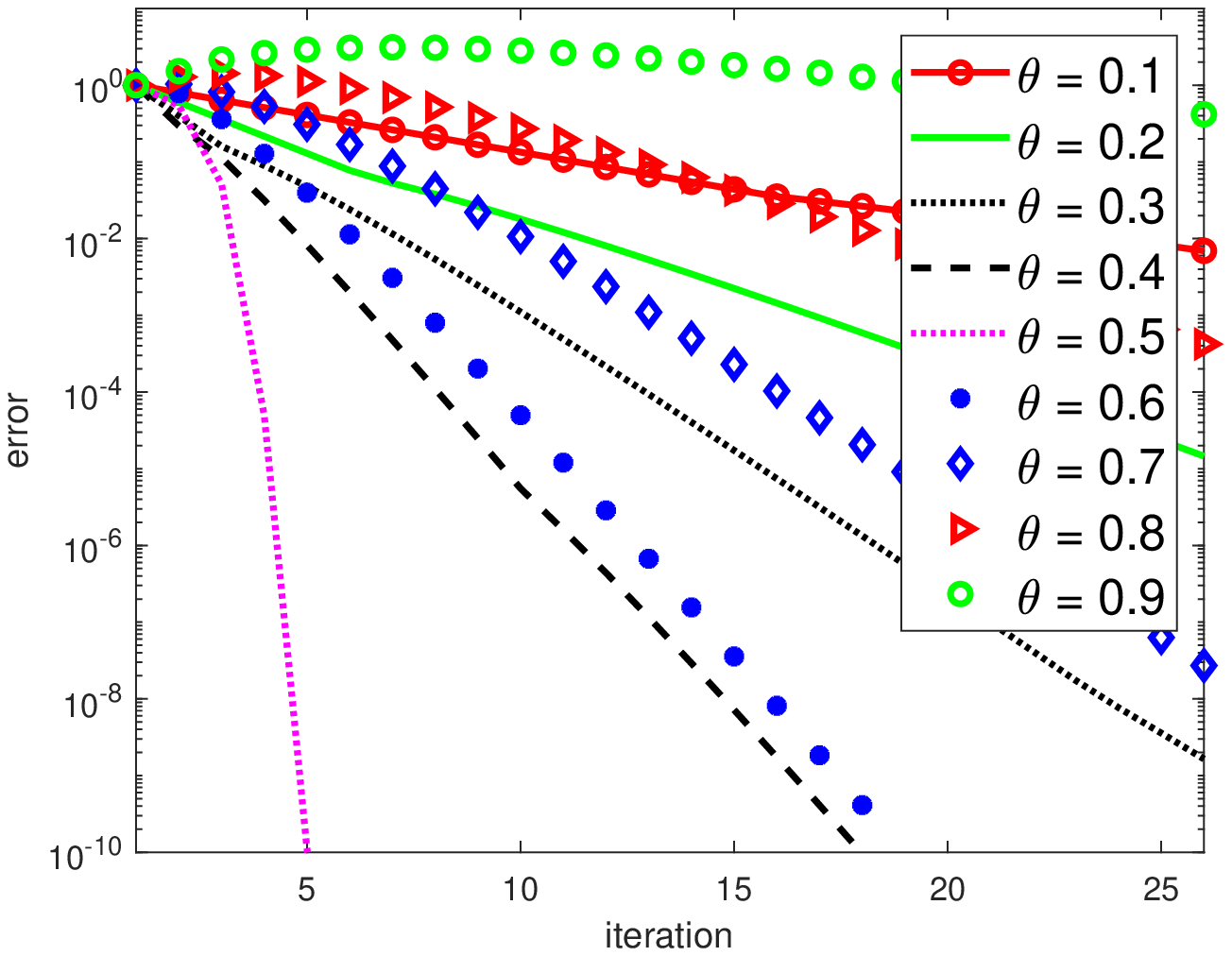}
 \caption{Convergence of DNWR for $a<b$ using various relaxation
 	parameters $\theta$ for $T=1$, on the left for $2\nu = 1$ and on the right for $2\nu = 1.5$}
  \label{NumFig02}
\end{figure}
\begin{figure}
	\centering
\includegraphics[width=0.30\textwidth]{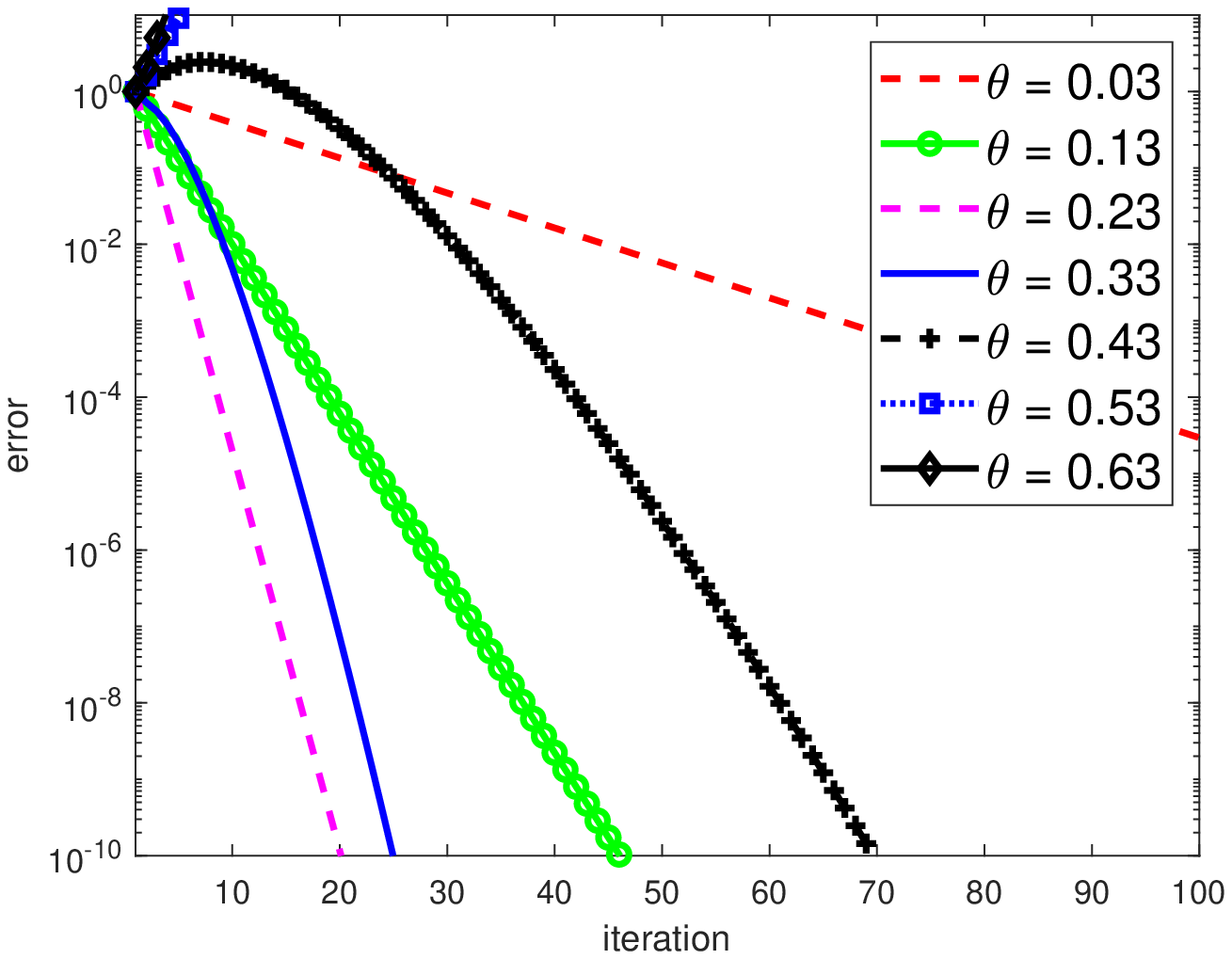}
\includegraphics[width=0.30\textwidth]{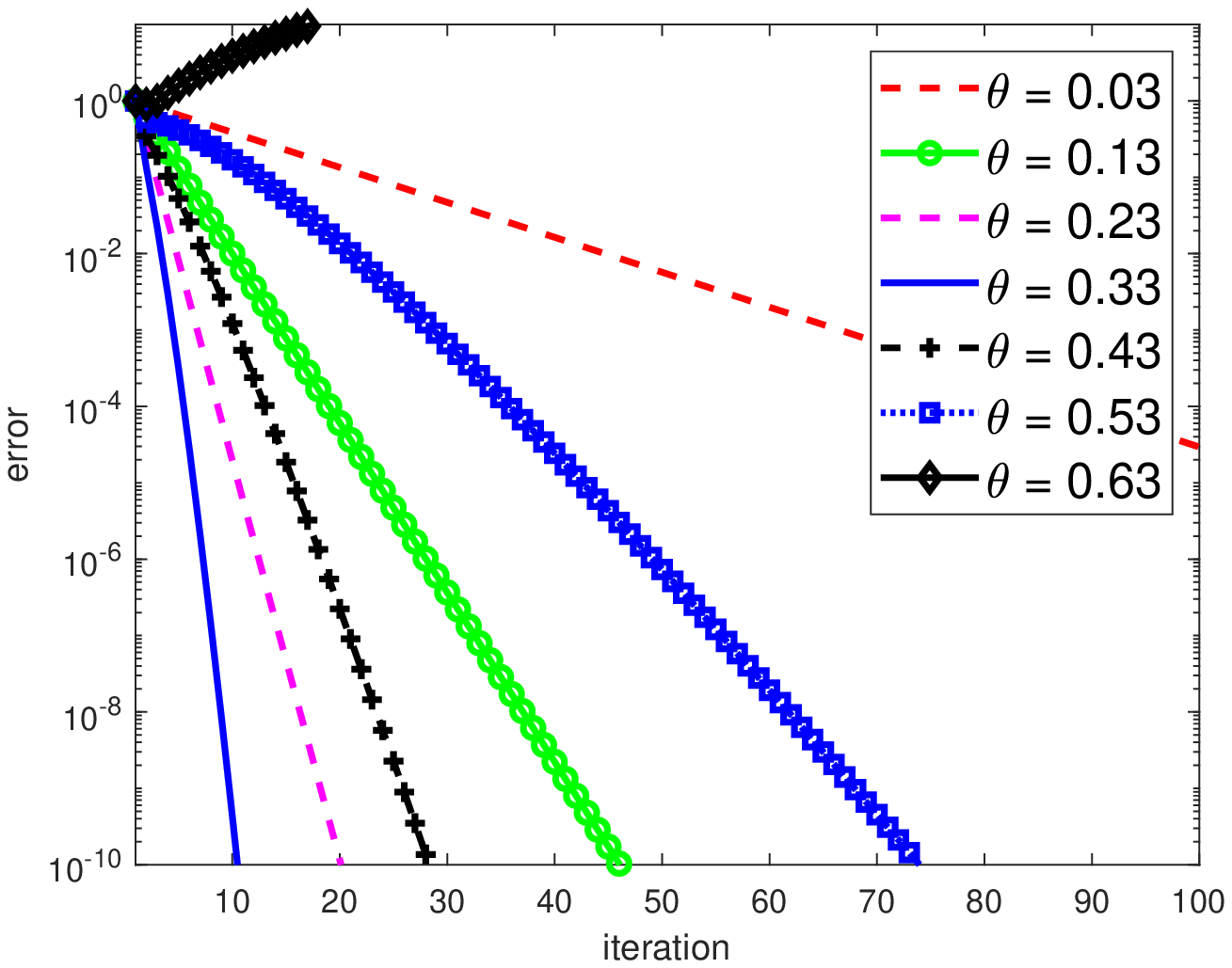}
\includegraphics[width=0.30\textwidth]{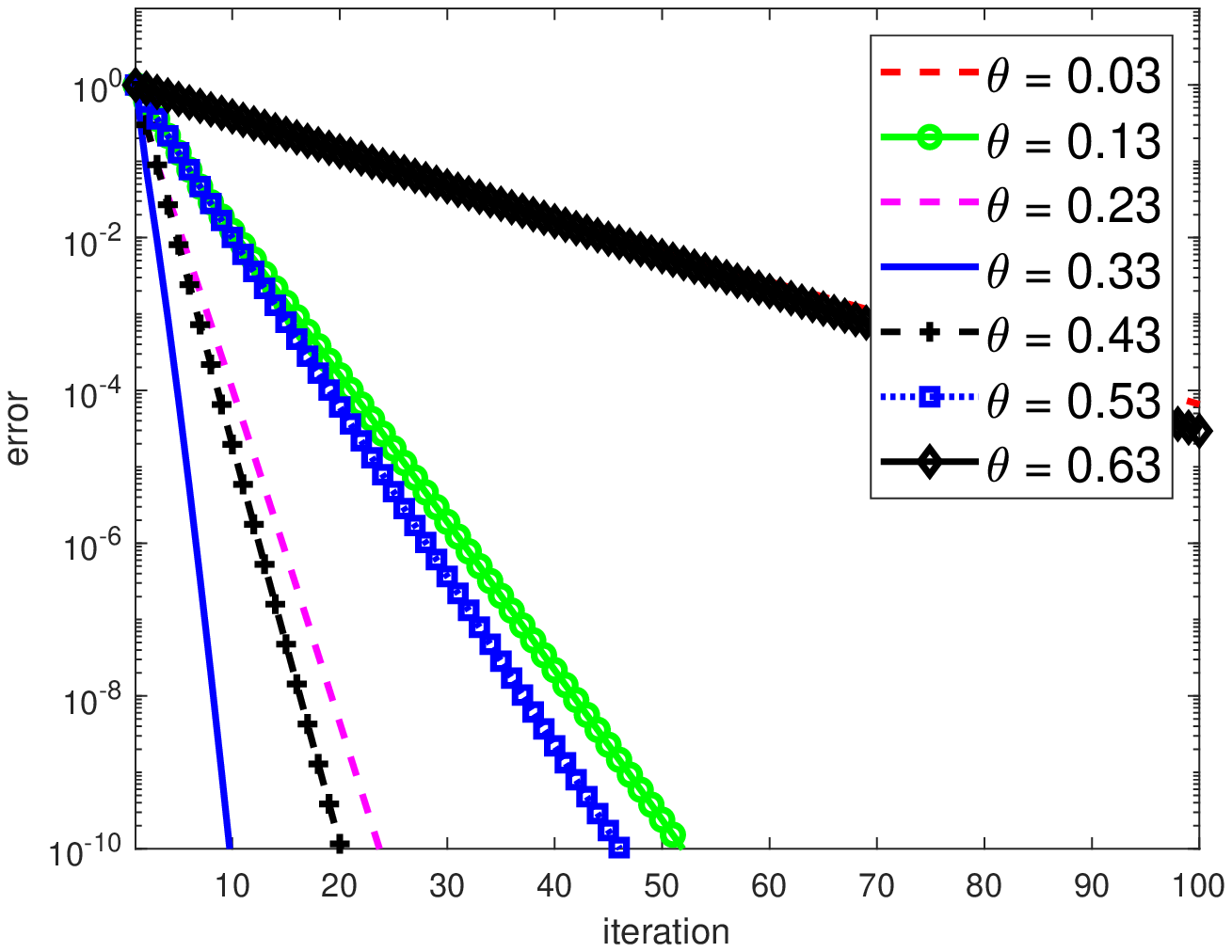}
	\caption{Convergence of DNWR for $2\nu=0.5$ using various relaxation
		parameters $\theta$ and heterogeneous space grid for $T=1$, on the left for $a<b$, middle $a=b$ and on the right for $a>b$}
	\label{NumFig03}
\end{figure}
\begin{figure}
	\centering
	\includegraphics[width=0.45\textwidth]{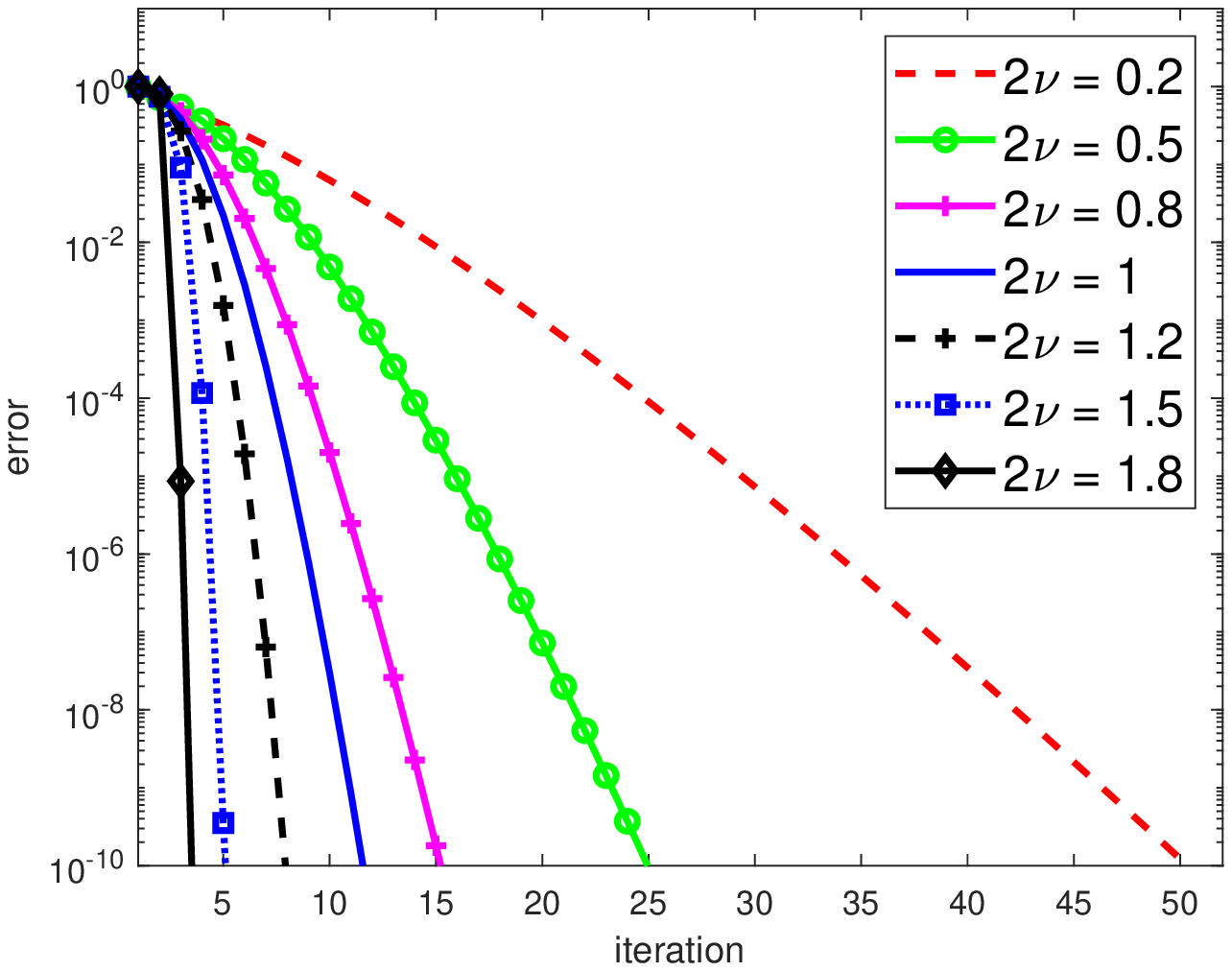}
	\includegraphics[width=0.45\textwidth]{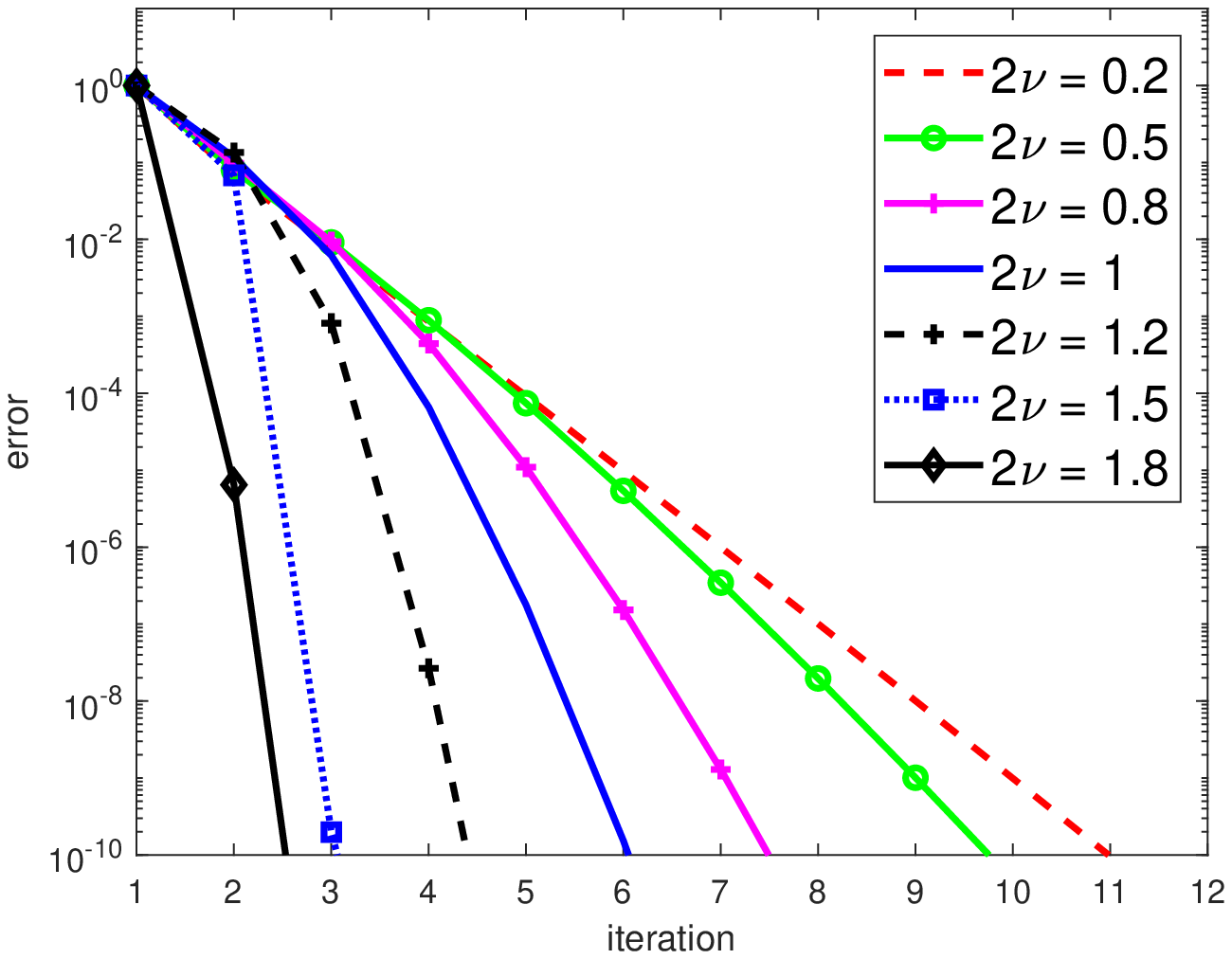}
	\caption{Convergence of DNWR for $\theta = 0.33$ using various fractional order $2\nu$ for $T=1$, on the left for $a<b$ and on the right for $a>b$}
	\label{NumFig05}
\end{figure}
\begin{figure}
	\centering
	\includegraphics[width=0.30\textwidth]{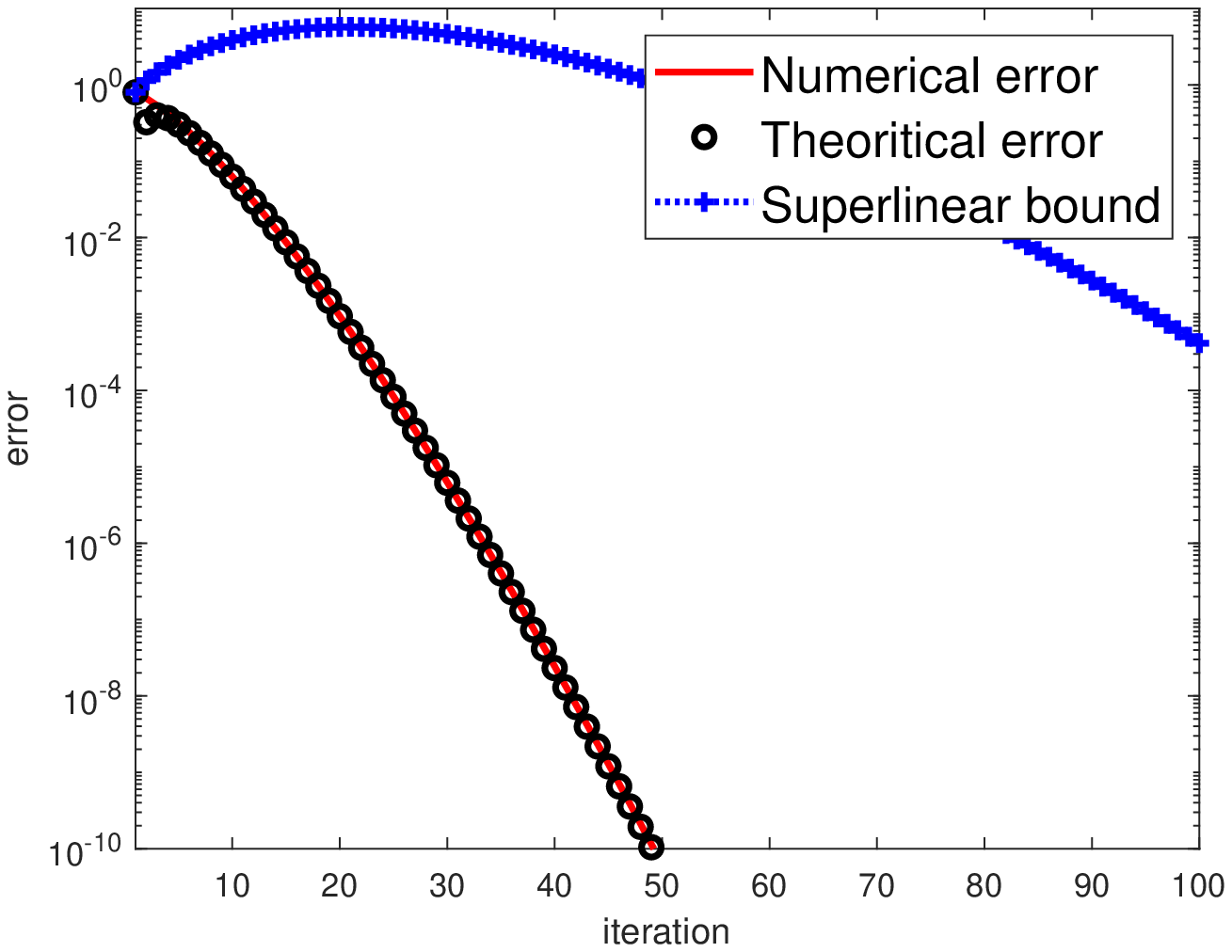}
	\includegraphics[width=0.30\textwidth]{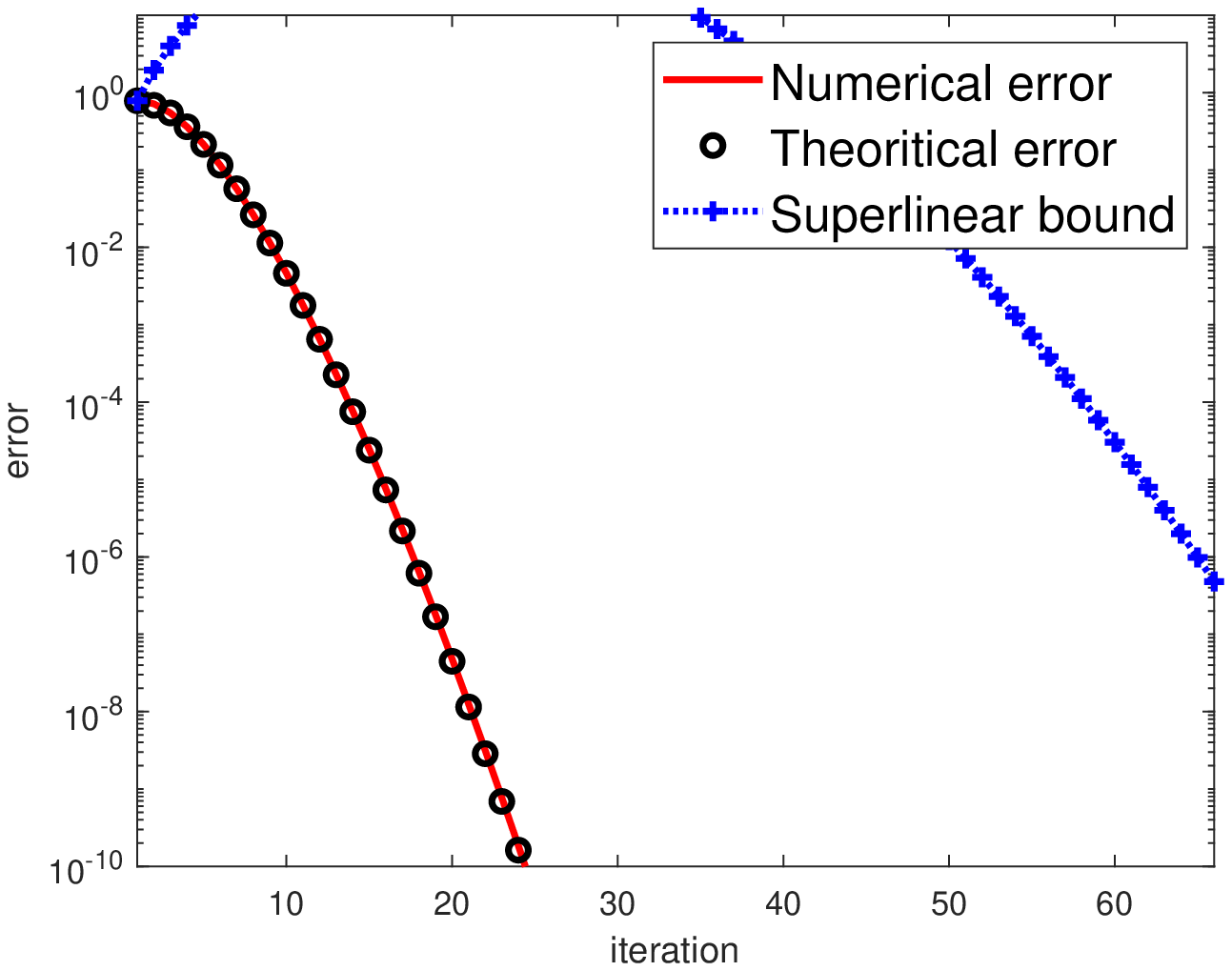}
    \includegraphics[width=0.30\textwidth]{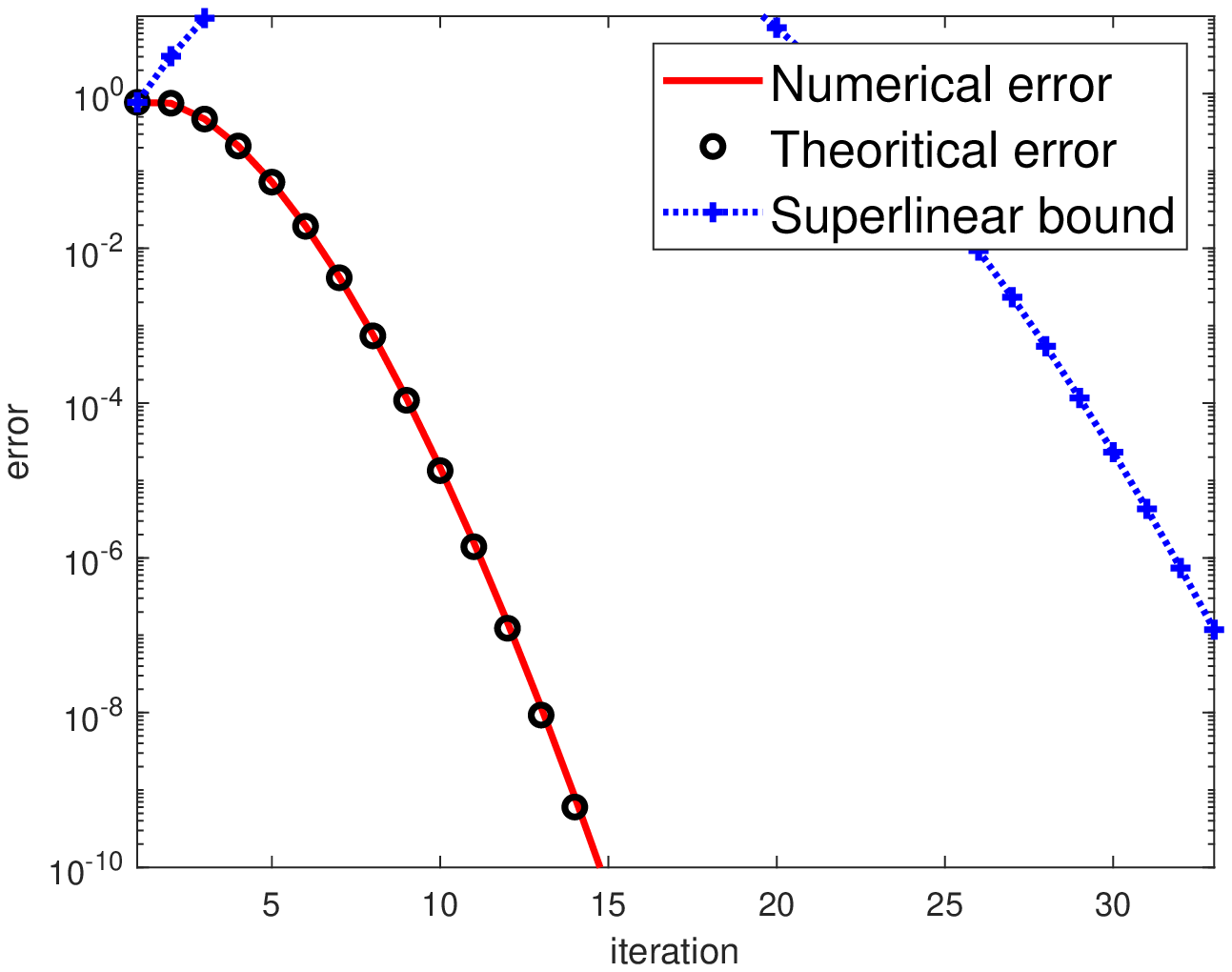}
	\caption{Comparison for $a>b$ among numerically measured convergence rate theoretical error at $T=1$, on the left for $2\nu = 0.2$, middle for $2\nu = 0.5$ and on the right for $2\nu = 0.8$}
	\label{NumFig06}
\end{figure}
\begin{figure}
	\centering
	\includegraphics[width=0.30\textwidth]{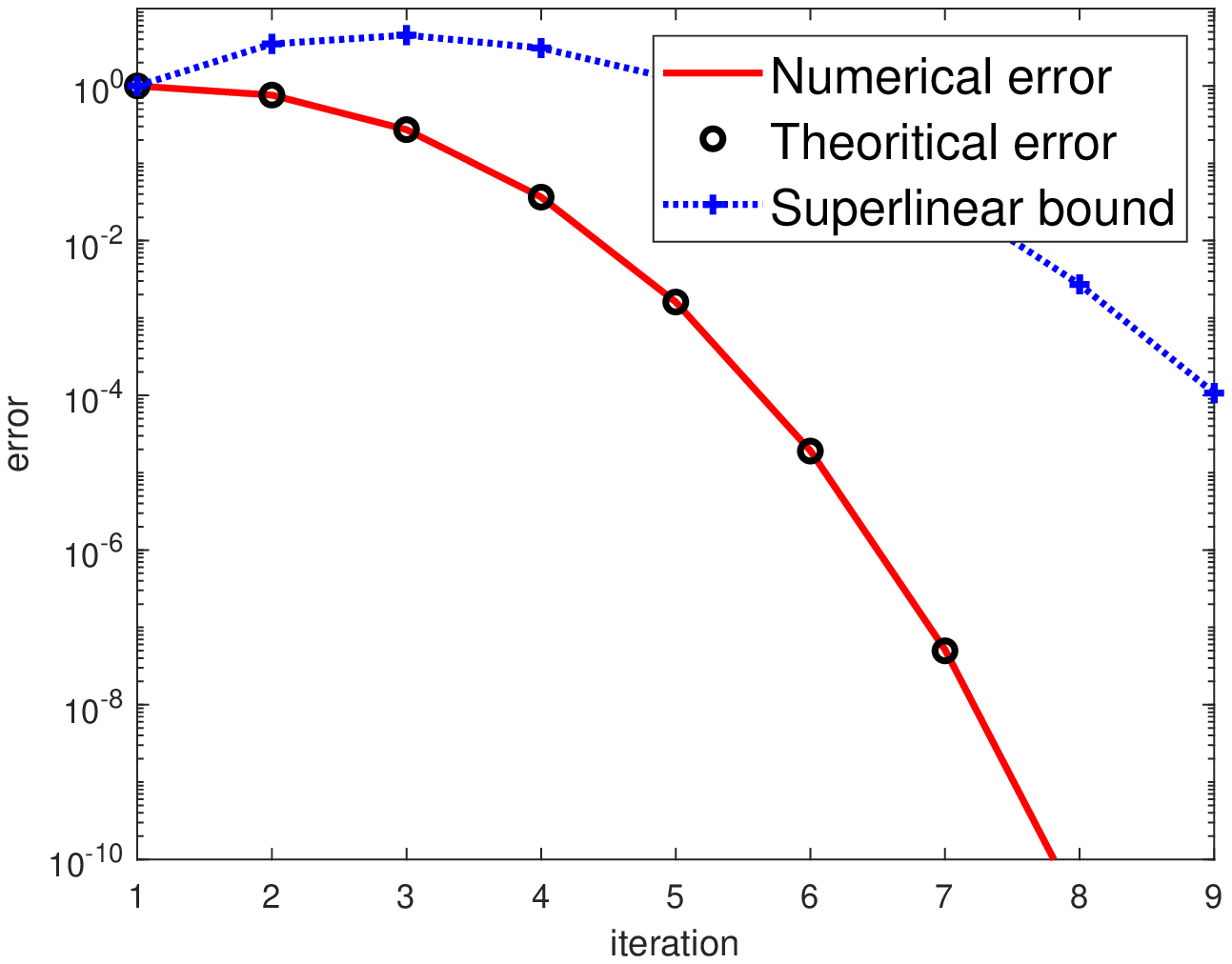}
	\includegraphics[width=0.30\textwidth]{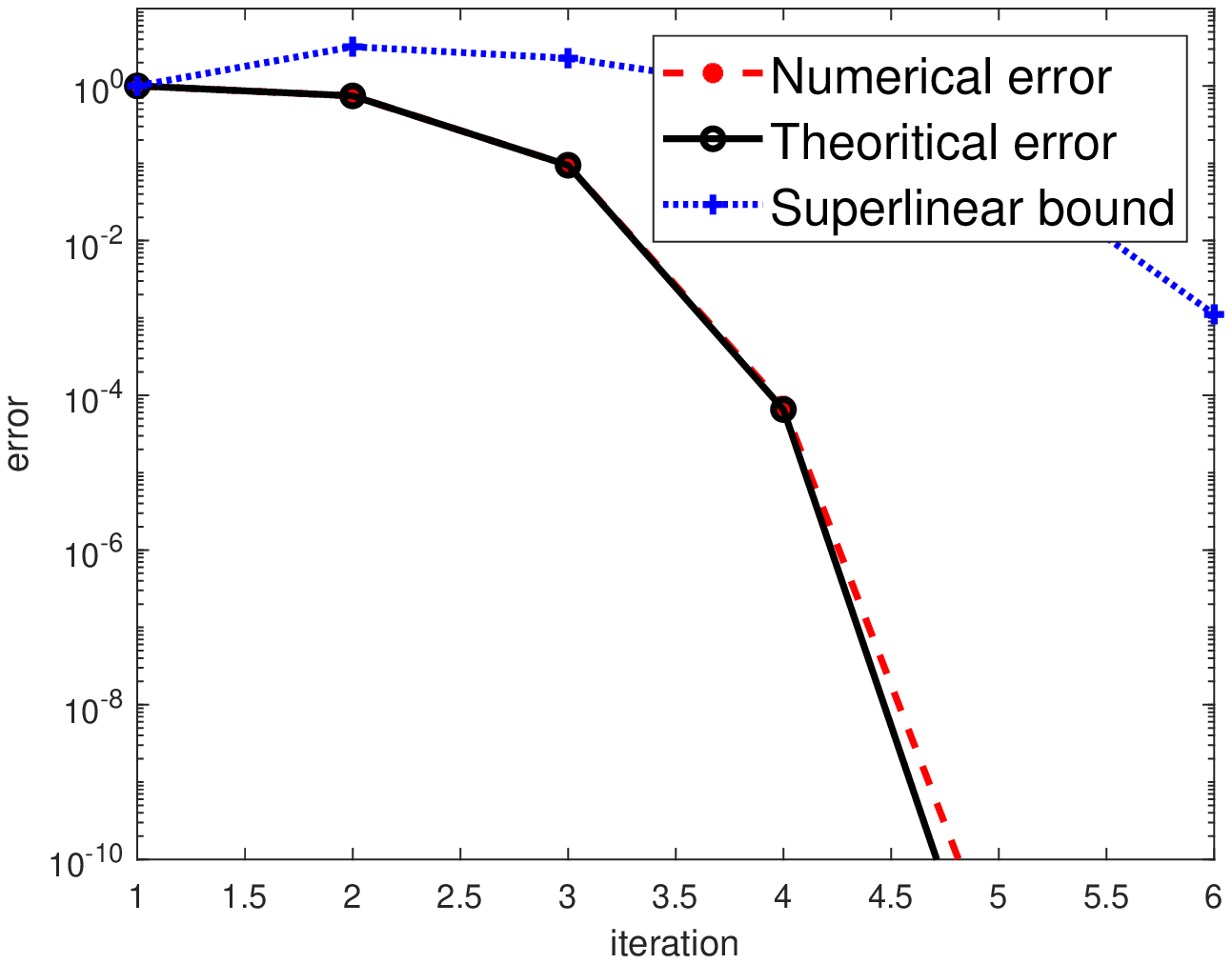}
	\includegraphics[width=0.30\textwidth]{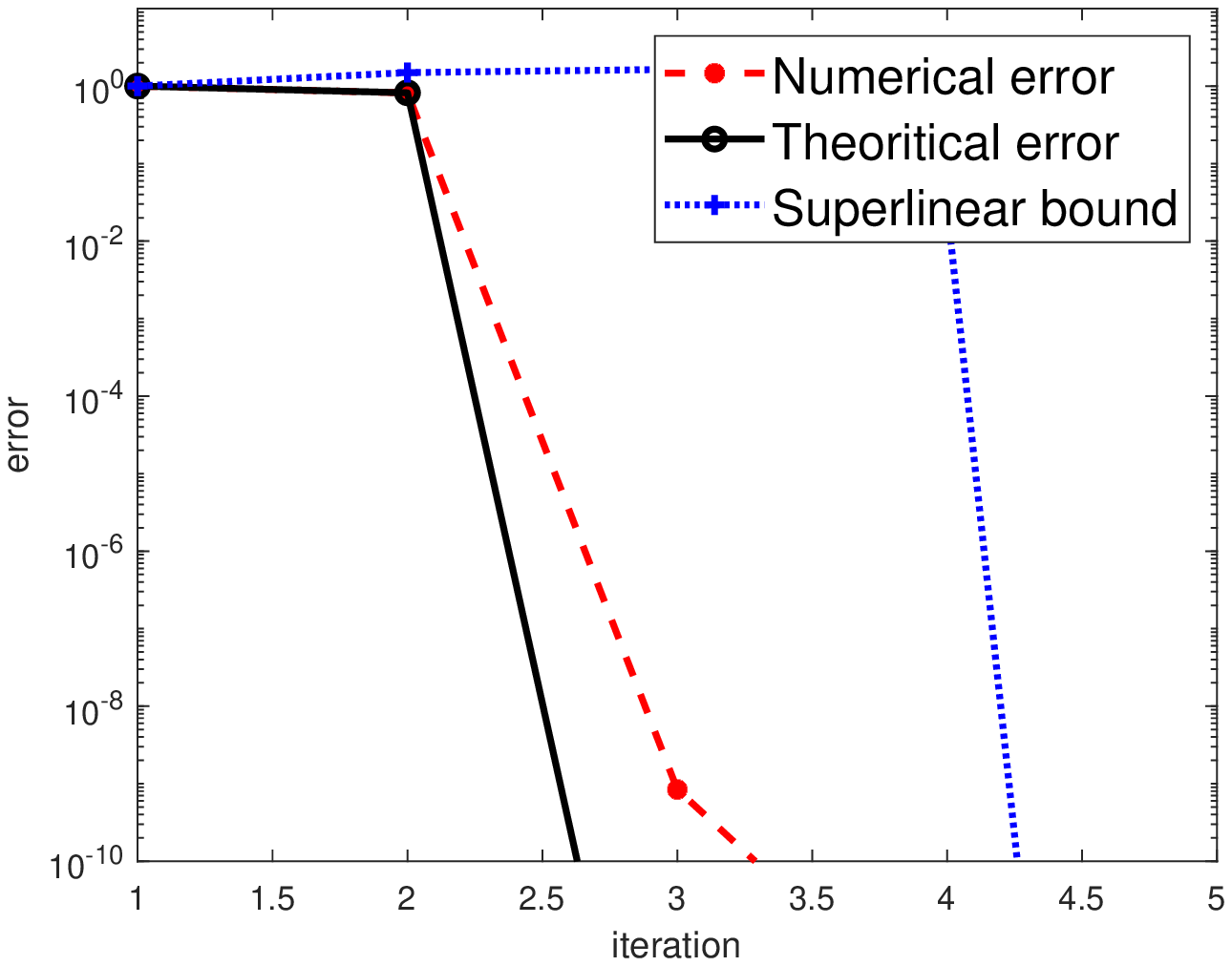}
	\caption{Comparison for $a>b$ among numerically measured convergence rate theoretical error at $T=1$, on the left for $2\nu = 1.2$, middle for $2\nu = 1.5$ and on the right for $2\nu = 1.8$}
	\label{NumFig07}
\end{figure}
\begin{figure}
	\centering
	\includegraphics[width=0.30\textwidth]{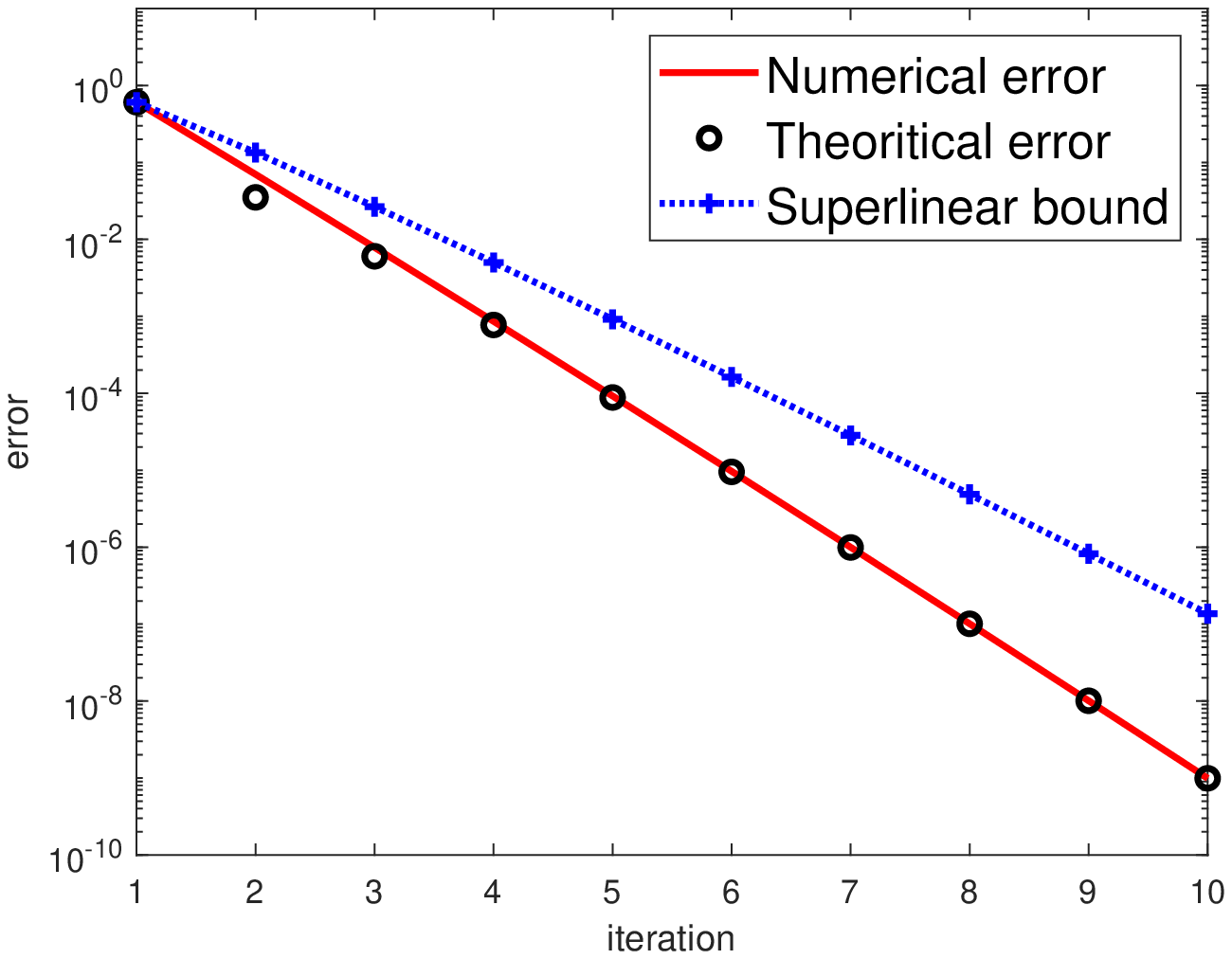}
	\includegraphics[width=0.30\textwidth]{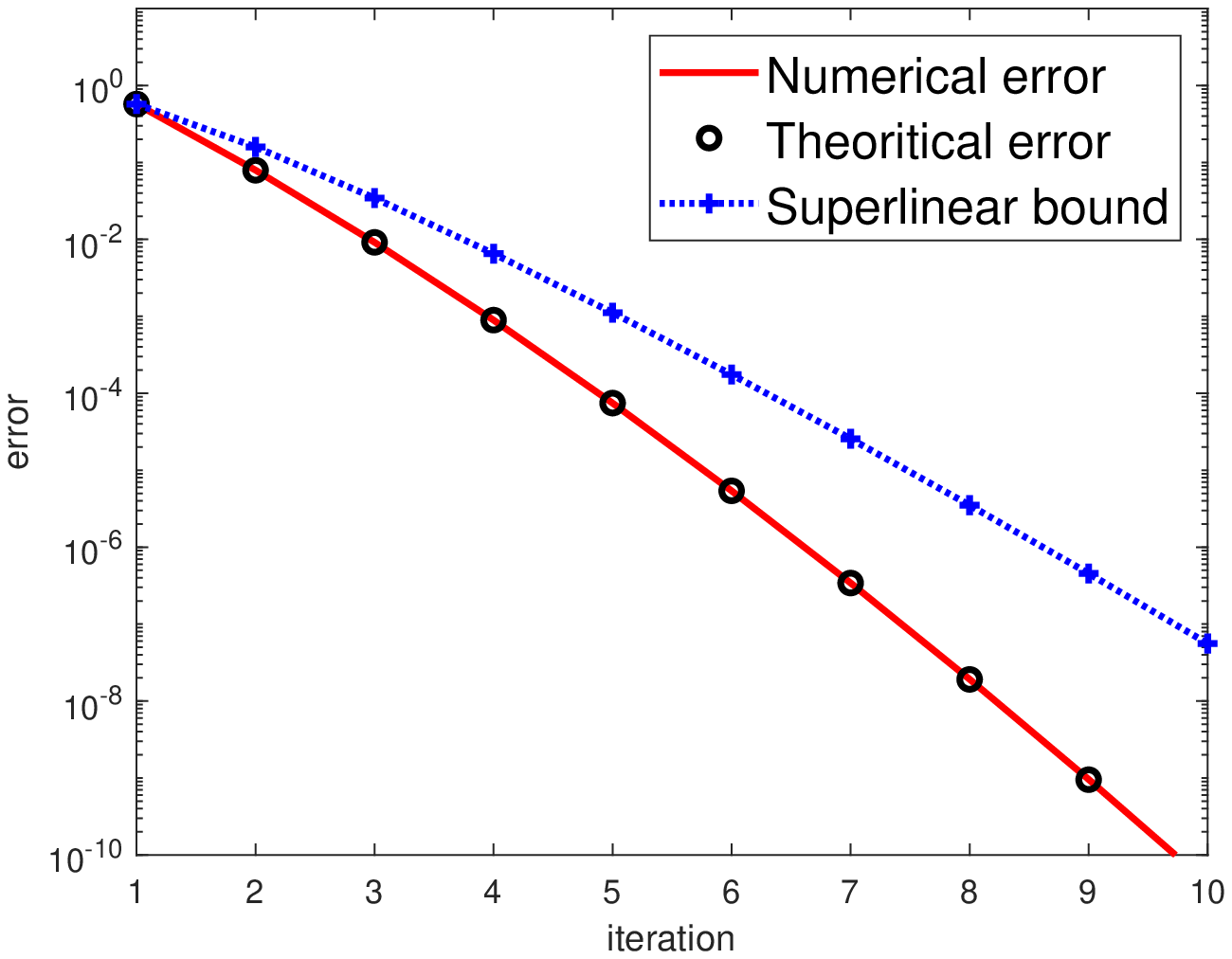}
	\includegraphics[width=0.30\textwidth]{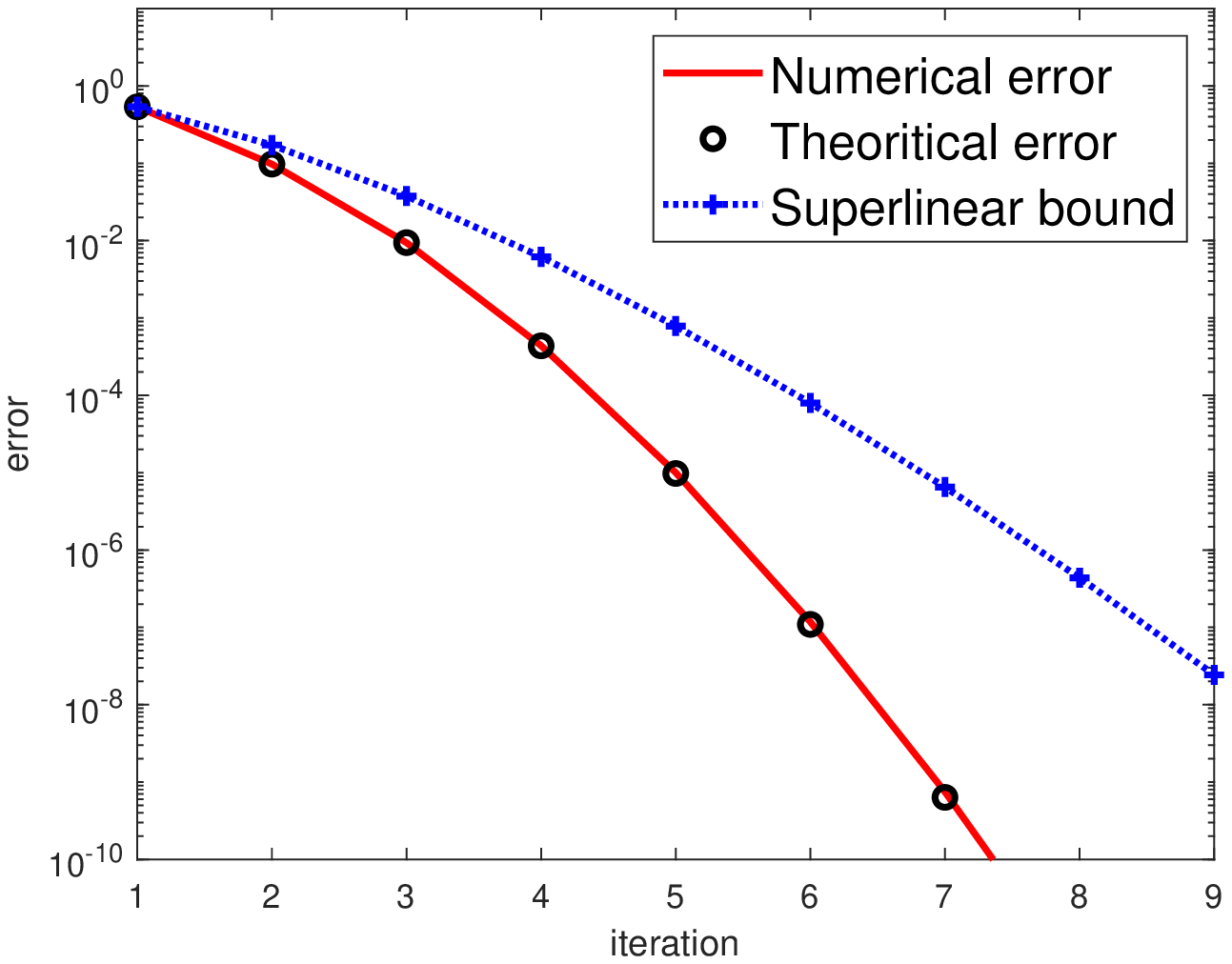}
	\caption{Comparison for $a<b$ among numerically measured convergence rate theoretical error at $T=1$, on the left for $2\nu = .2$, middle for $2\nu = .5$ and on the right for $2\nu = .8$}
	\label{NumFig08}
\end{figure}
\begin{figure}
	\centering
	\includegraphics[width=0.30\textwidth]{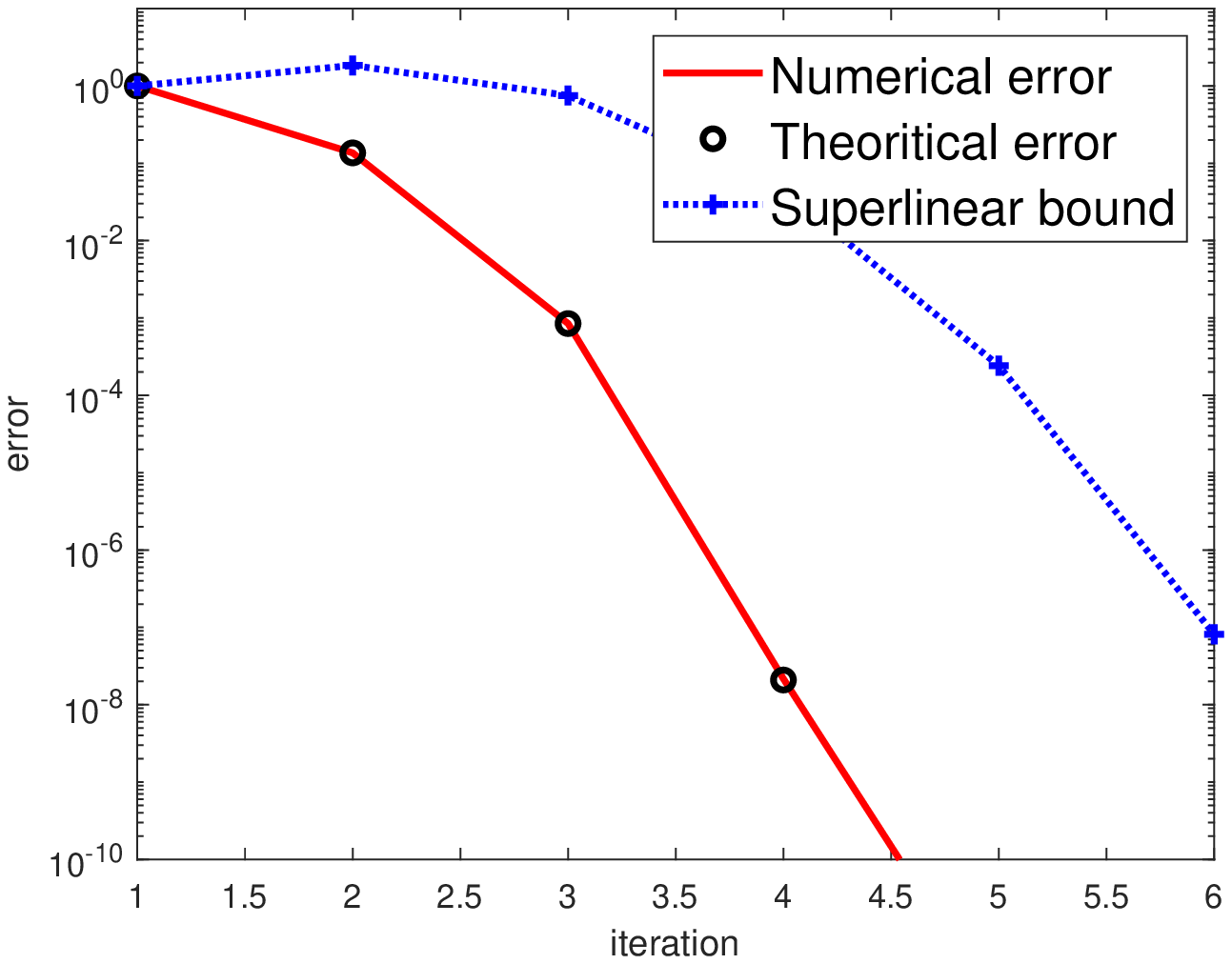}
	\includegraphics[width=0.30\textwidth]{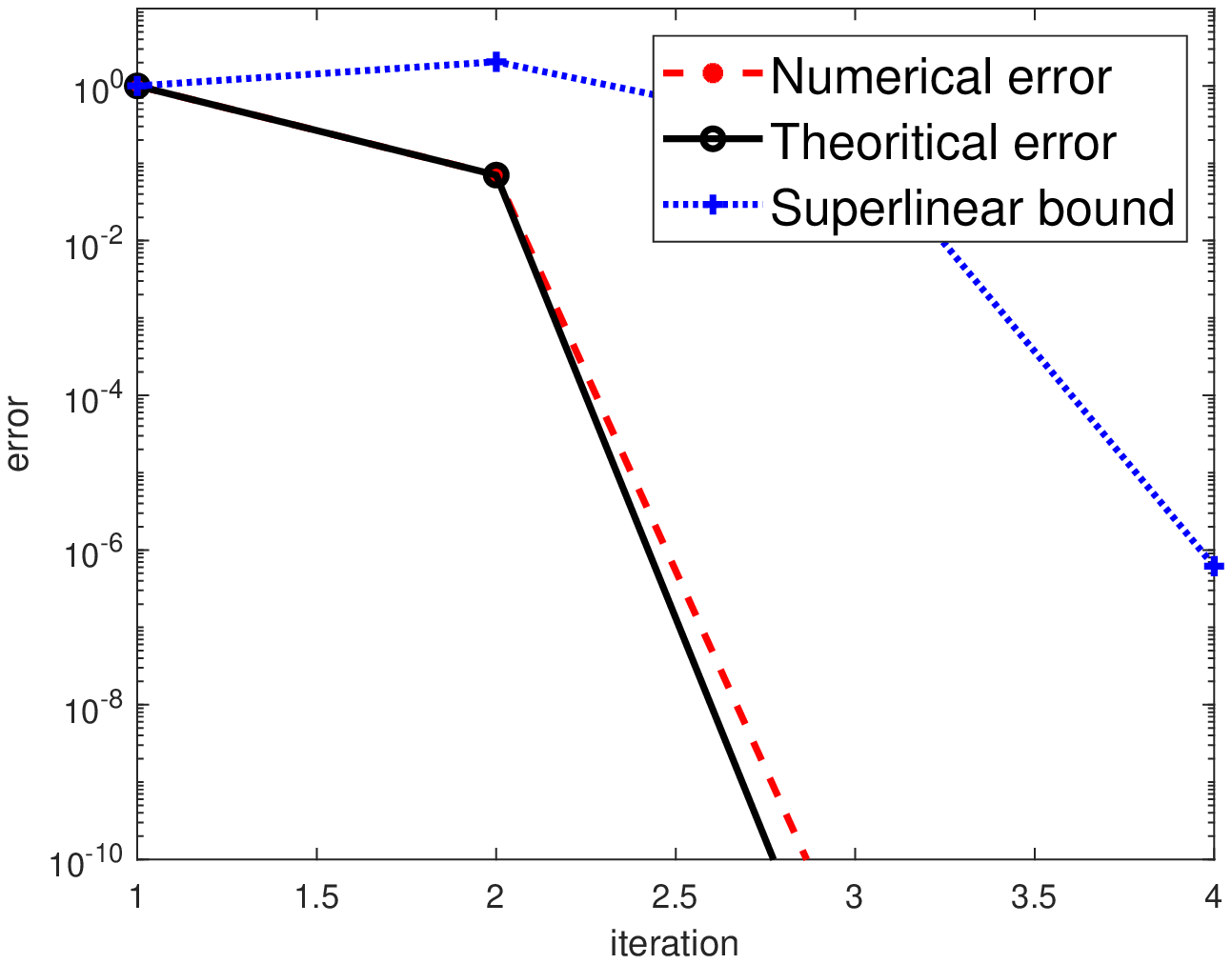}
	\includegraphics[width=0.30\textwidth]{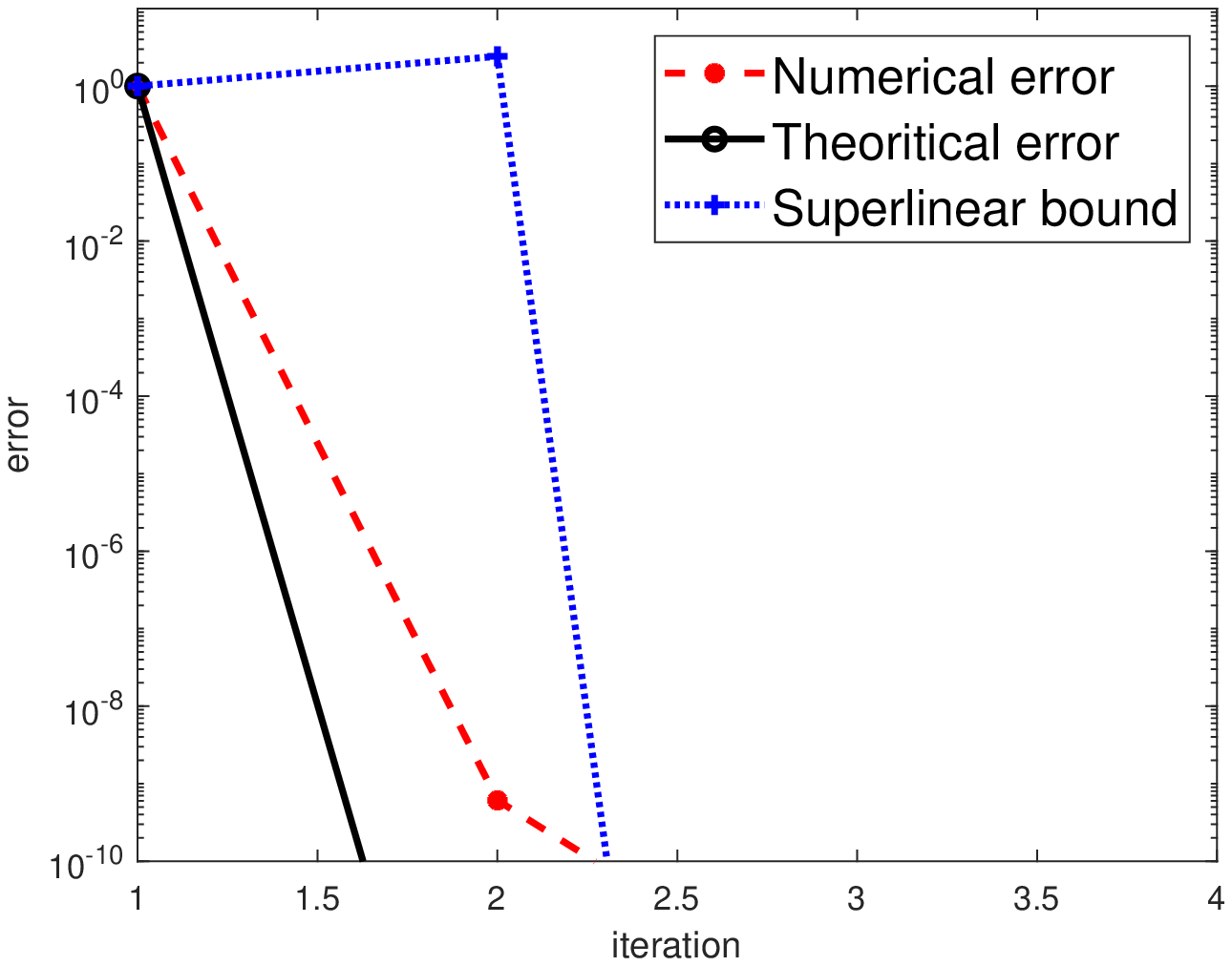}
	\caption{Comparison for $a<b$ among numerically measured convergence rate theoretical error at $T=1$, on the left for $2\nu = 1.2$, middle for $2\nu = 1.5$ and on the right for $2\nu = 1.8$}
	\label{NumFig09}
\end{figure}
\subsection{NNWR Algorithm in 1D}
We perform the NNWR experiment on the model problem ~\eqref{NumericalModelProblem} by choosing $F(x) = \sin(\pi x/16)$ and $g(x) = x(16-x)/64$. For the first set of experiments, the domain $\Omega = (0,16)$ is divided into five subdomains with spatial grid size according to subdomain size (mentioned below) and temporal step size $\Delta t = 0.015$ for super-diffusion on a time window $T = 4$.

In Figure~\ref{NumFig2}, we compare the error for different values of the relaxation parameter $\theta$ for equal (left) and unequal (right) subdomains with fractional order $2\nu =0.5$ with the grid size $\Delta x = 0.01$. The unequal subdomains are respectively $\Omega_1 = (0,3.5), \Omega_2 = (3.5,5.5), \Omega_3 = (5.5,10),\Omega_4 = (10,12), \Omega_5 = (12,16)$. We run the same experiments for fractional order $2\nu = 1.5$ in Figure~\ref{NumFig4}. From these experiments, we observe that $\theta = 0.25$ give the super-linear optimal convergence, and the other values of $\theta$ give linear convergence.

In Figure~\ref{NumFig5}, we compare the numerical errors for different values of the fractional order $2\nu$ for the same equal and unequal subdomains, as mentioned earlier. We observe that the large value of the fractional order gives faster convergence, which is expected as per theoretical results.

So far, we have chosen $\kappa = 1$ for NNWR experiments. Now we run the experiments by considering diffusion coefficient $\kappa$ as $\kappa_1 = 0.25, \kappa_2 = 1,\kappa_3 = 0.25,\kappa_4 = 4,\kappa_5 = 1$, and show in Figure~\ref{NumFig6}. We consider $\theta_i = 1/(2+\sqrt{\kappa_i/\kappa_{i+1}}+\sqrt{\kappa_{i+1}/\kappa_i})$ for superlinear convergence.

Finally, we compare the numerical behavior of the NNWR algorithm with the theoretical estimates obtained and plot in Figure~\ref{NumFig8} \& \ref{NumFig9}. We consider four, eight, and twelve subdomain cases. Here, we divide domain $\Omega = (0,16)$ into equal subdomain size for each case and take the diffusion coefficient $\kappa_i = 1/4^{(i-1)}, i = 1,2,...,N/2$ as shown in table~\ref{Table2}. 

\begin {table}
\begin{center}
\caption {diffusion coefficient used for different subdomains in NNWR experiments in Fig.~\ref{NumFig8} and Fig.~\ref{NumFig9}. \label{Table2}}
\begin{tabular}{|c|c|c|c|c|c|c|}
\hline
No. of subdomains & $\kappa_{1}$ & $\kappa_{2}$ & $\kappa_{3}$ & $\kappa_{4}$ & $\kappa_{5}$ & $\kappa_{6}$ \tabularnewline
\hline
4 & 1,4 & 2,3 &  &  &  & \tabularnewline
\hline
8 & 1,8 & 2,7 & 3,6 & 4,5 &  &  \tabularnewline
\hline
12 & 1,12 & 2,11 & 3,10 & 4,9 & 5,8 & 6,7 \tabularnewline
\hline
\end{tabular}
\end{center}
\end {table}

\begin{figure}
  \centering
  \includegraphics[width=0.45\textwidth]{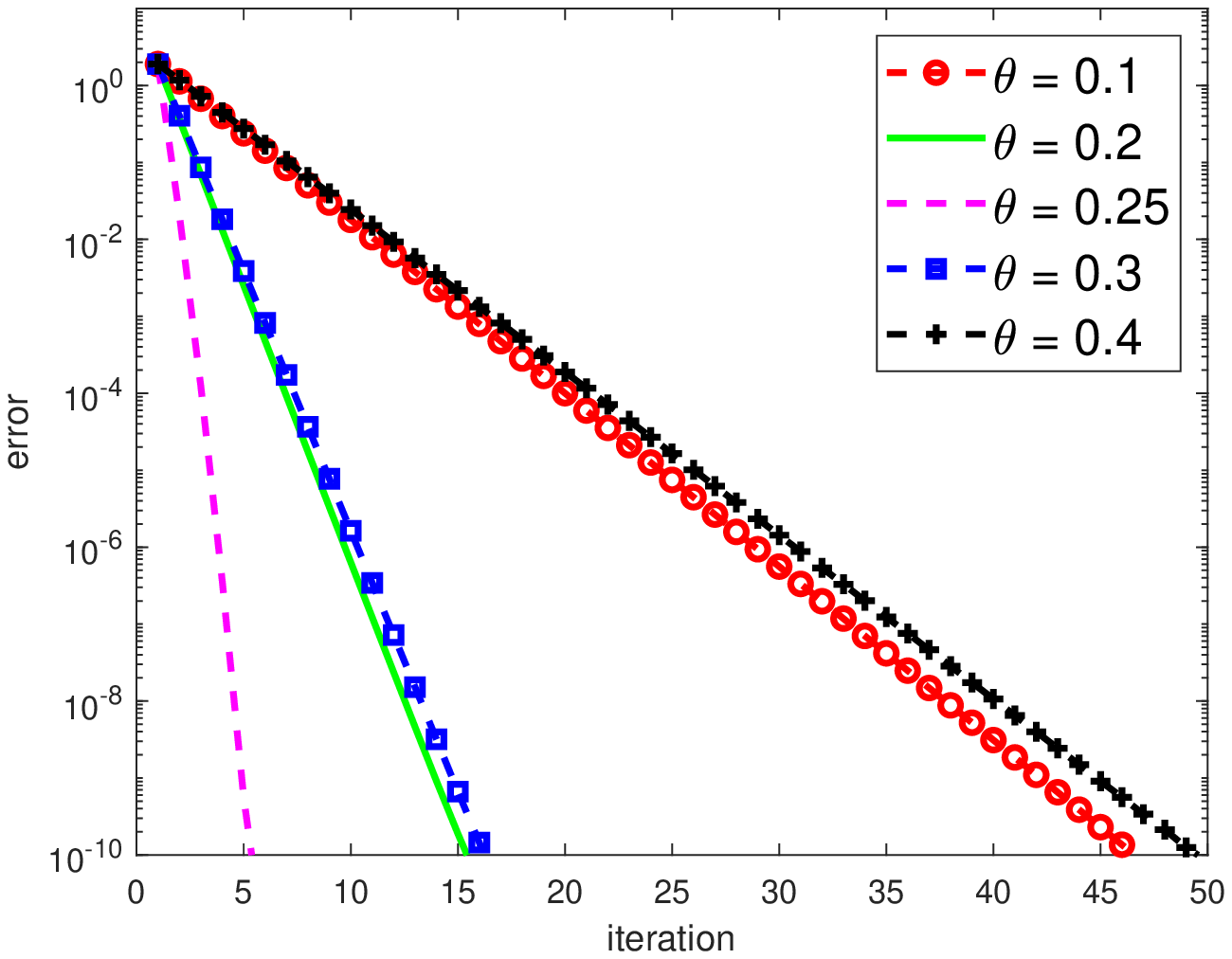}
  \includegraphics[width=0.45\textwidth]{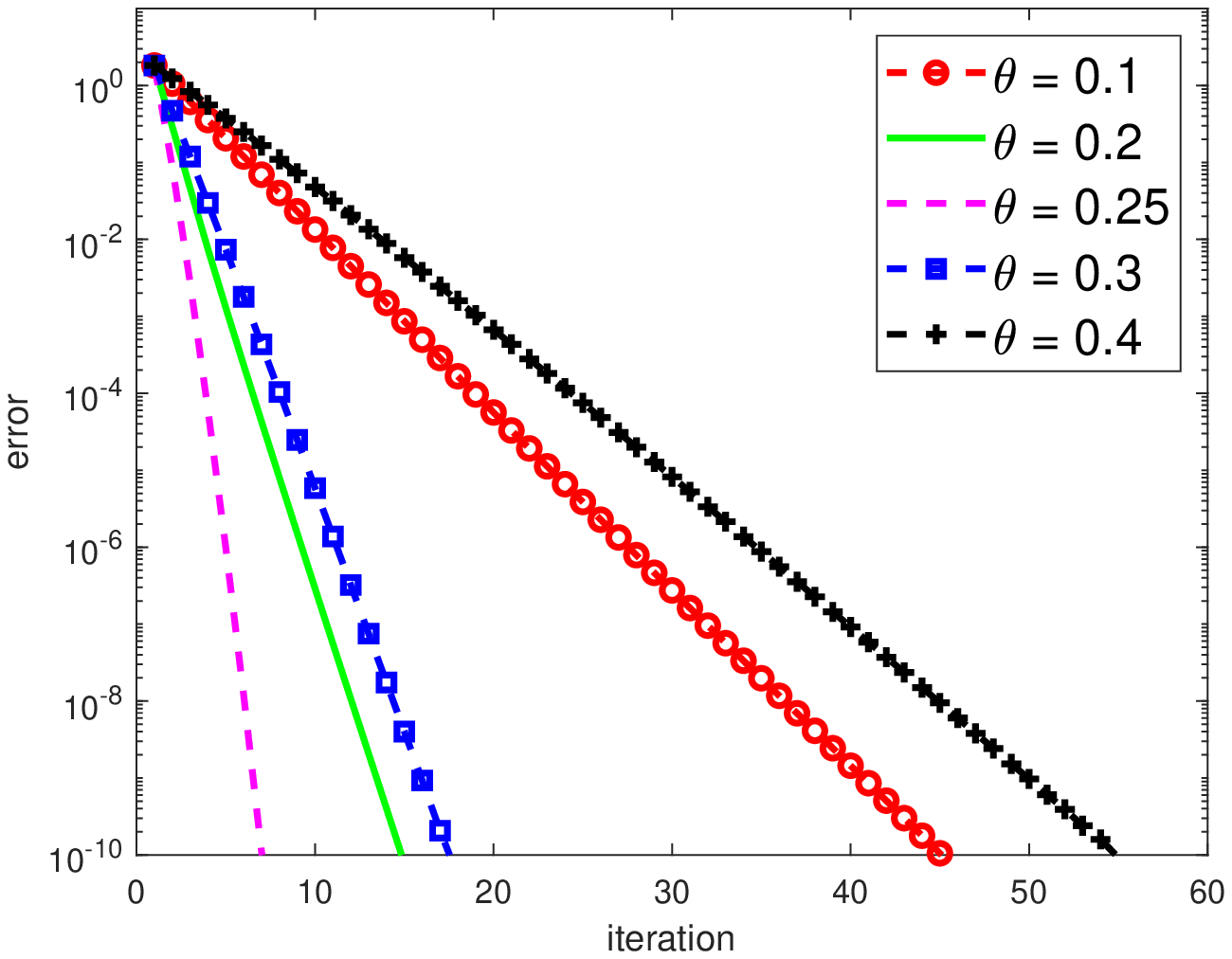}
  \caption{Convergence of NNWR with five subdomains for $2\nu=.5$ for $T=4$ and various
    relaxation parameters on the left equal subdomain, and on the right unequal subdomain}
  \label{NumFig2}
\end{figure}
\begin{figure}
	\centering
	\includegraphics[width=0.45\textwidth]{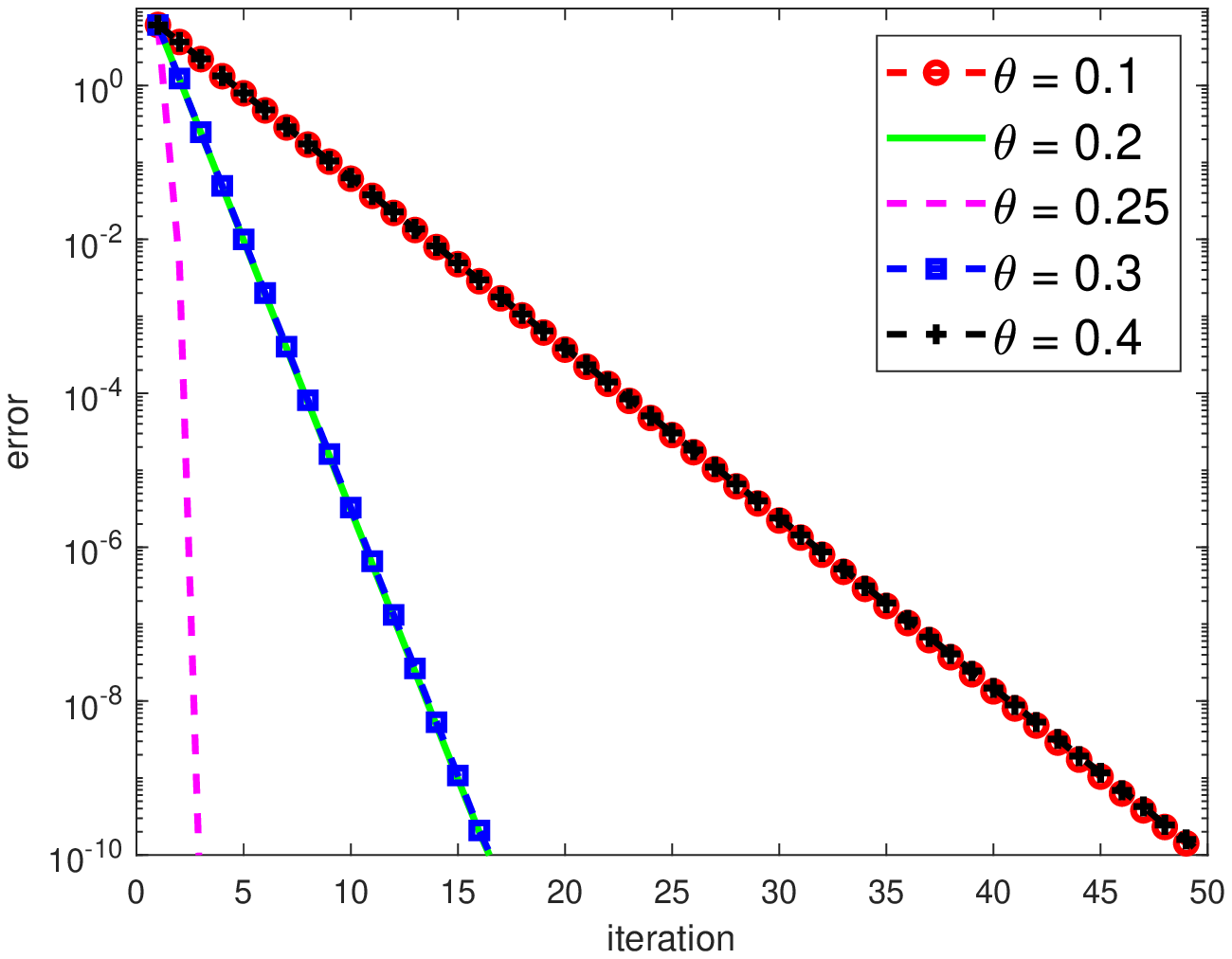}
	\includegraphics[width=0.45\textwidth]{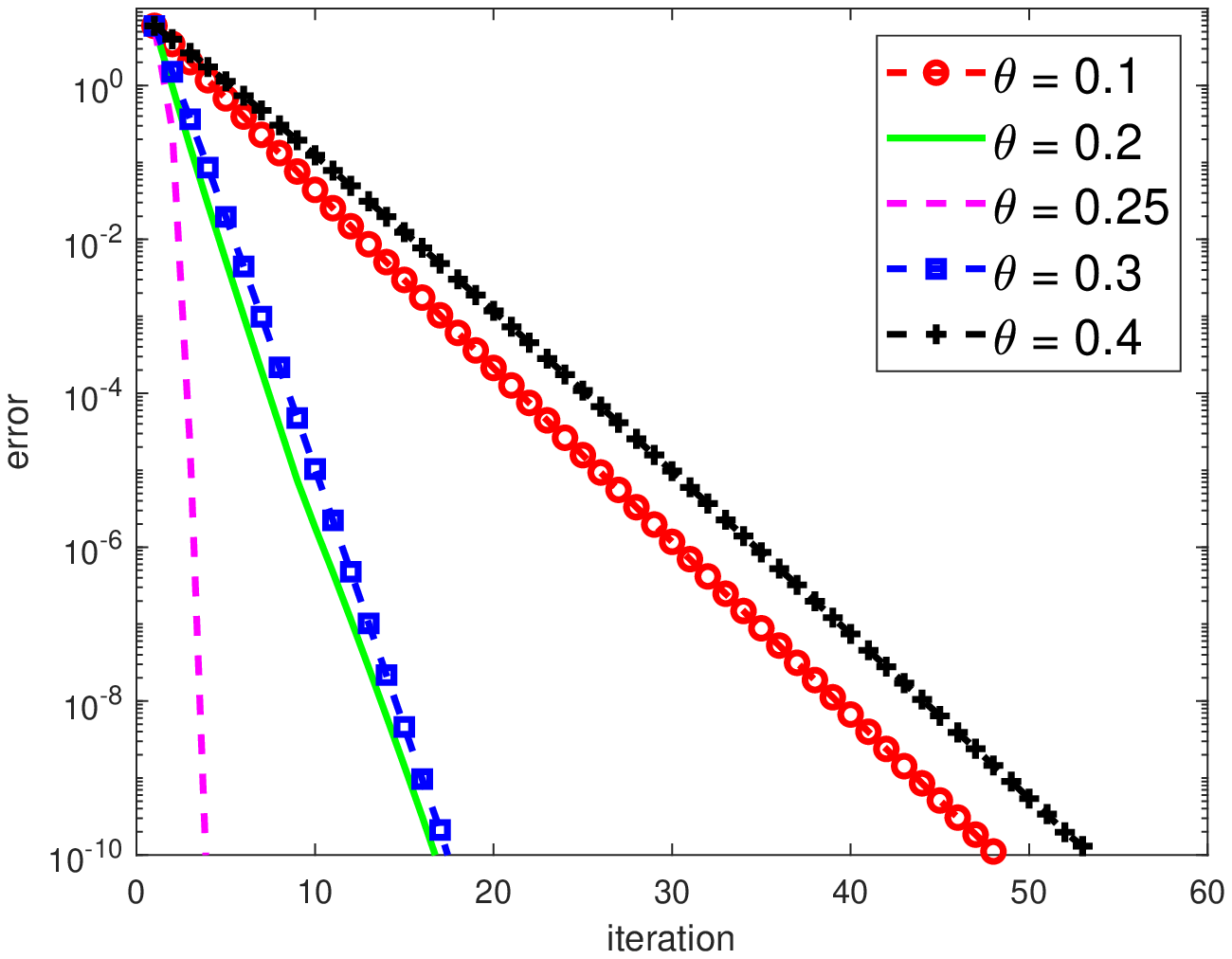}
	\caption{Convergence of NNWR with five subdomains for $2\nu=1.5$ for $T=4$ and various
		relaxation parameters on the left equal subdomain, and on the right unequal subdomain}
	\label{NumFig4}
\end{figure}
\begin{figure}
	\centering
	\includegraphics[width=0.45\textwidth]{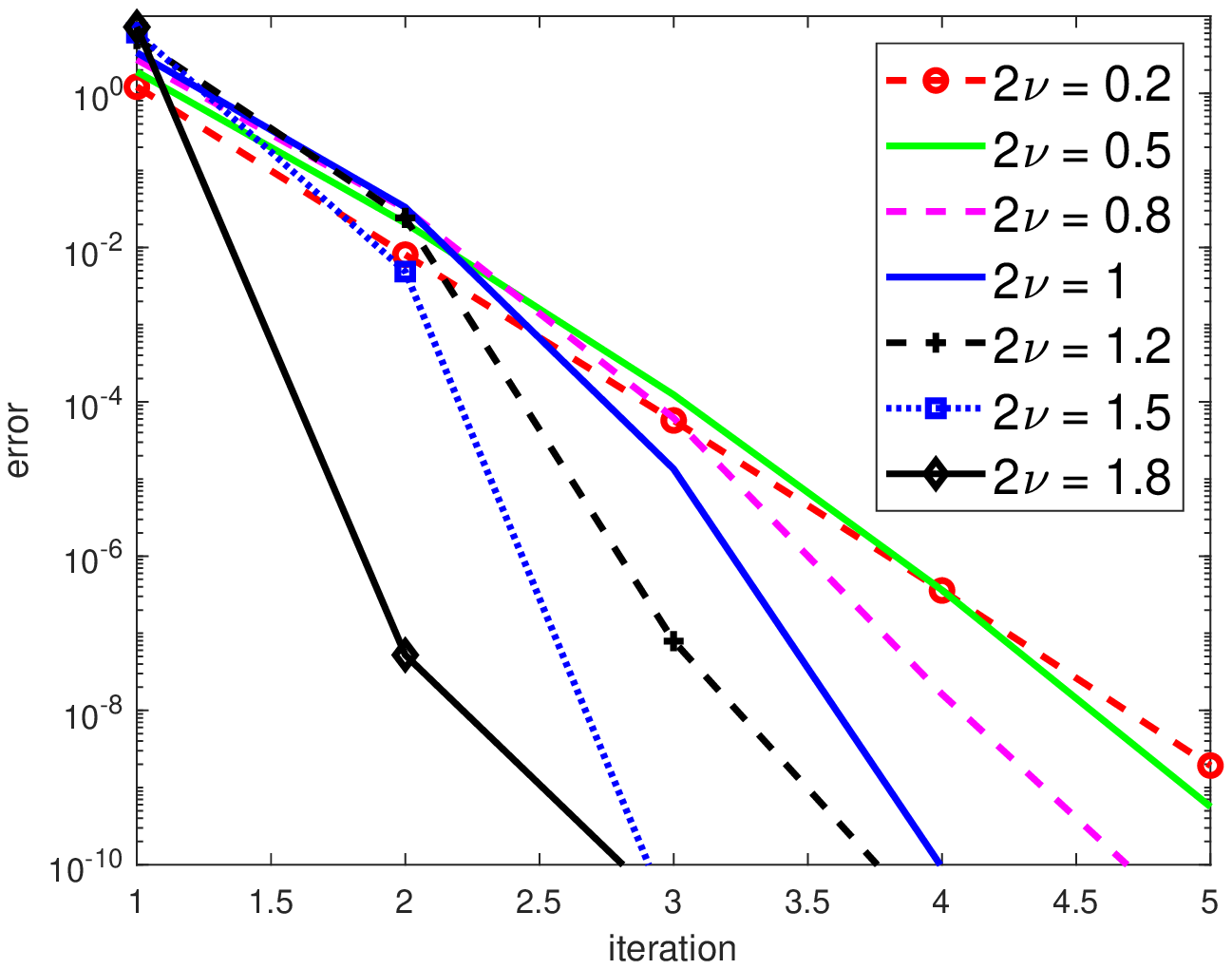}
	\includegraphics[width=0.45\textwidth]{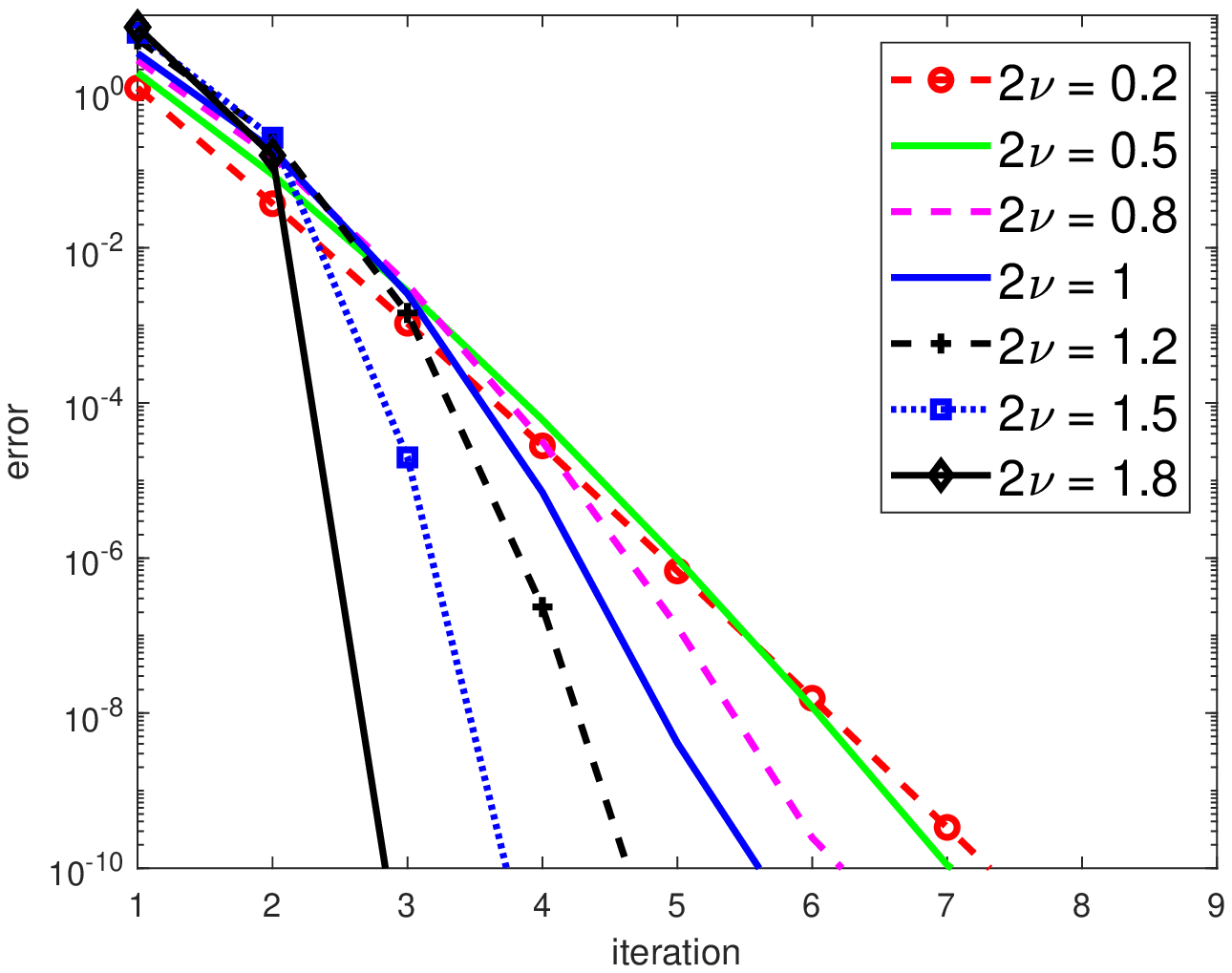}
	\caption{Convergence of NNWR with five subdomains for $T=4$ and various
		fractional order on the left equal kappa and equal subdomain size, and on the right equal kappa and unequal subdomain size}
	\label{NumFig5}
\end{figure}
\begin{figure}
	\centering
	\includegraphics[width=0.45\textwidth]{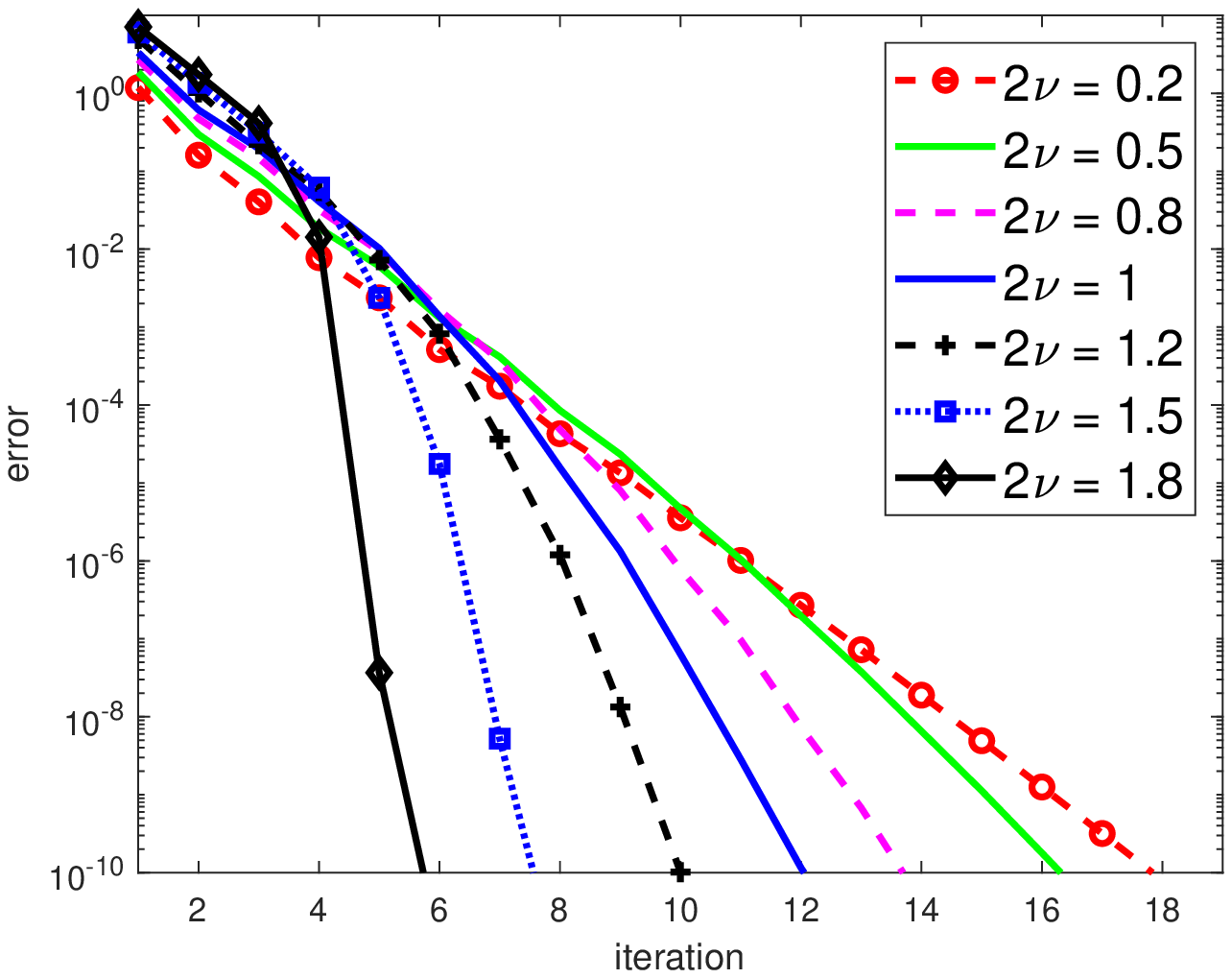}
	\includegraphics[width=0.45\textwidth]{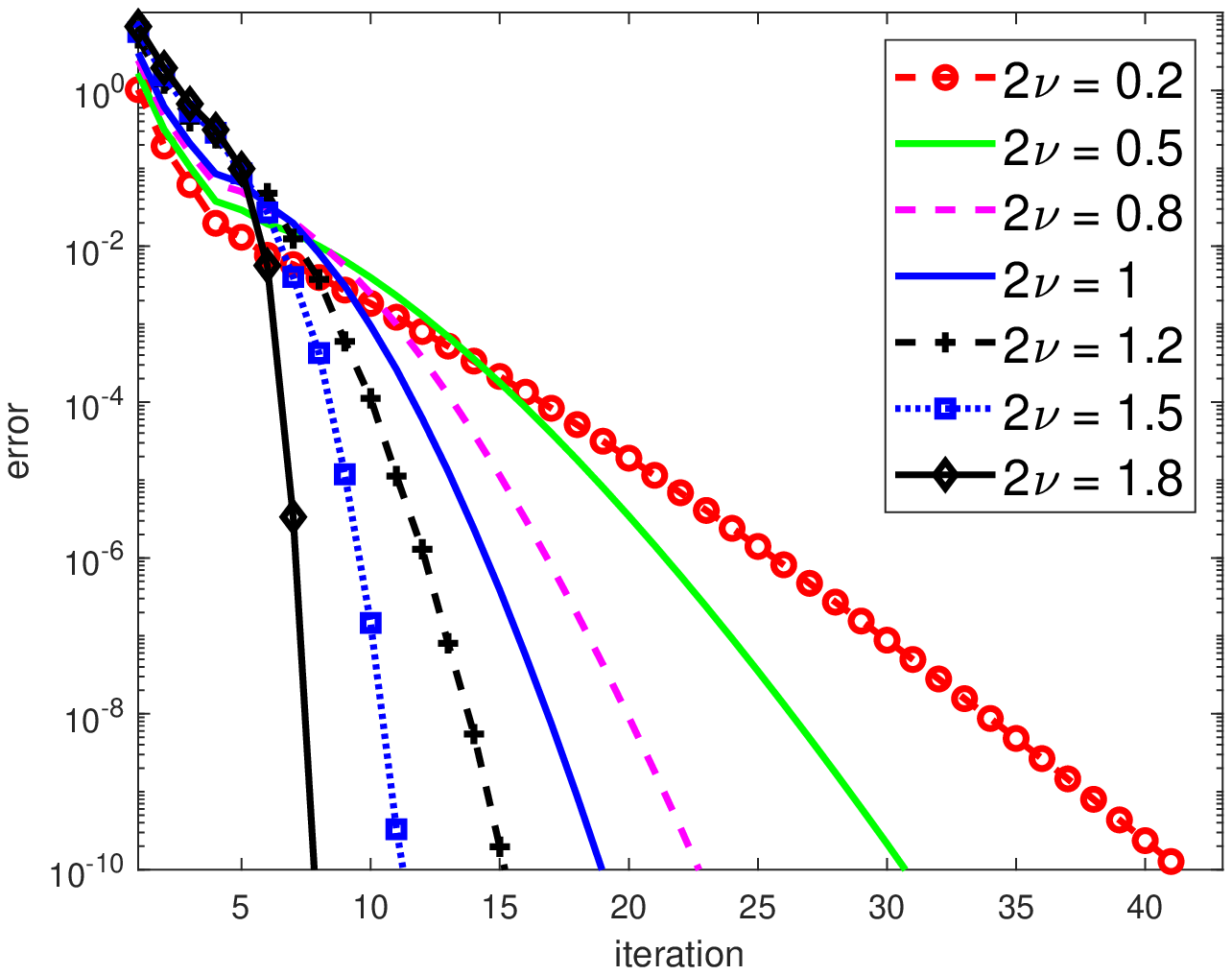}
	\caption{Convergence of NNWR with five subdomains for $T=4$ and various
		fractional order on the left unequal kappa equal subdomain size, and on the right unequal kappa and unequal subdomain size}
	\label{NumFig6}
\end{figure}
\begin{figure}
	\centering
	\includegraphics[width=0.30\textwidth]{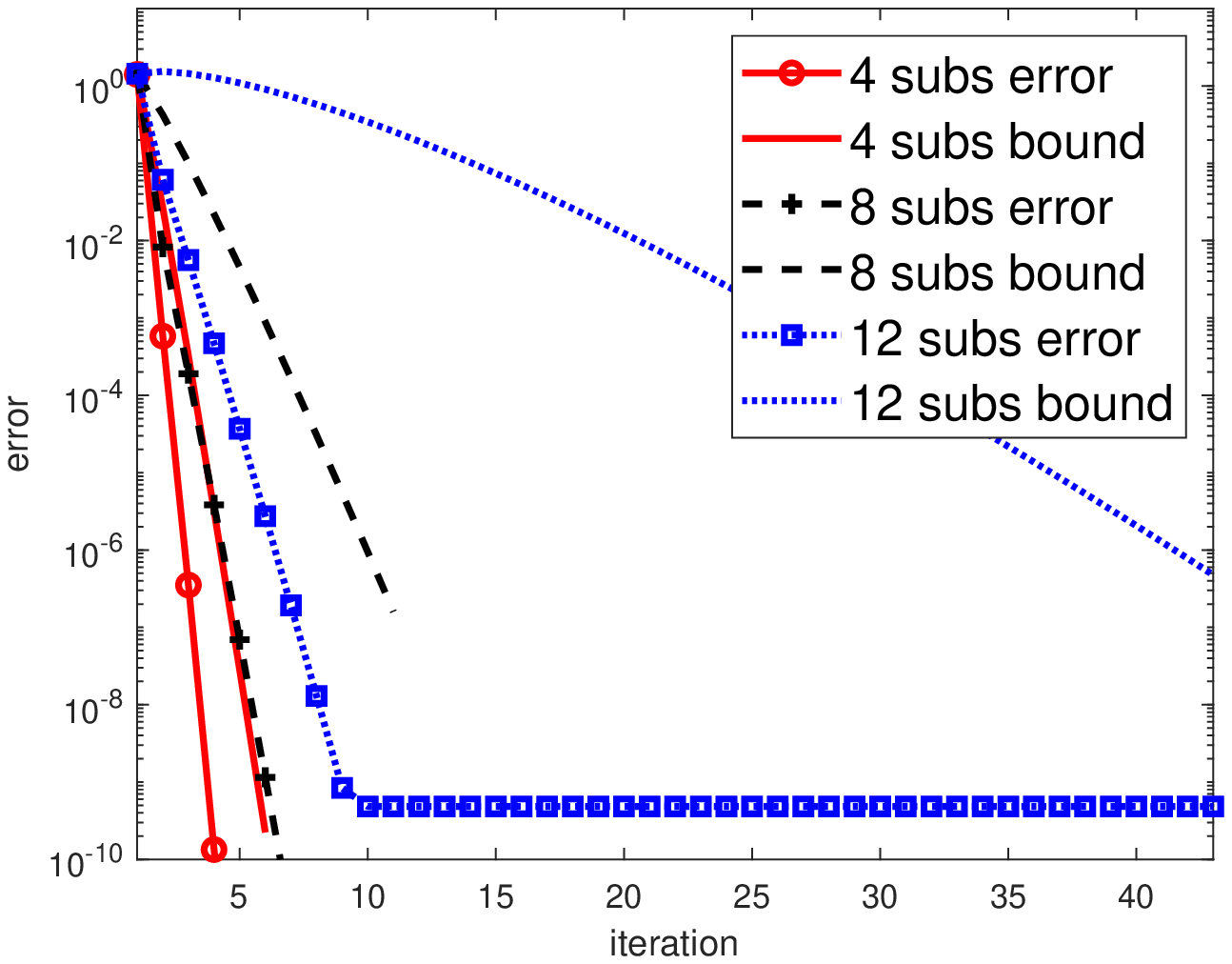}
	\includegraphics[width=0.30\textwidth]{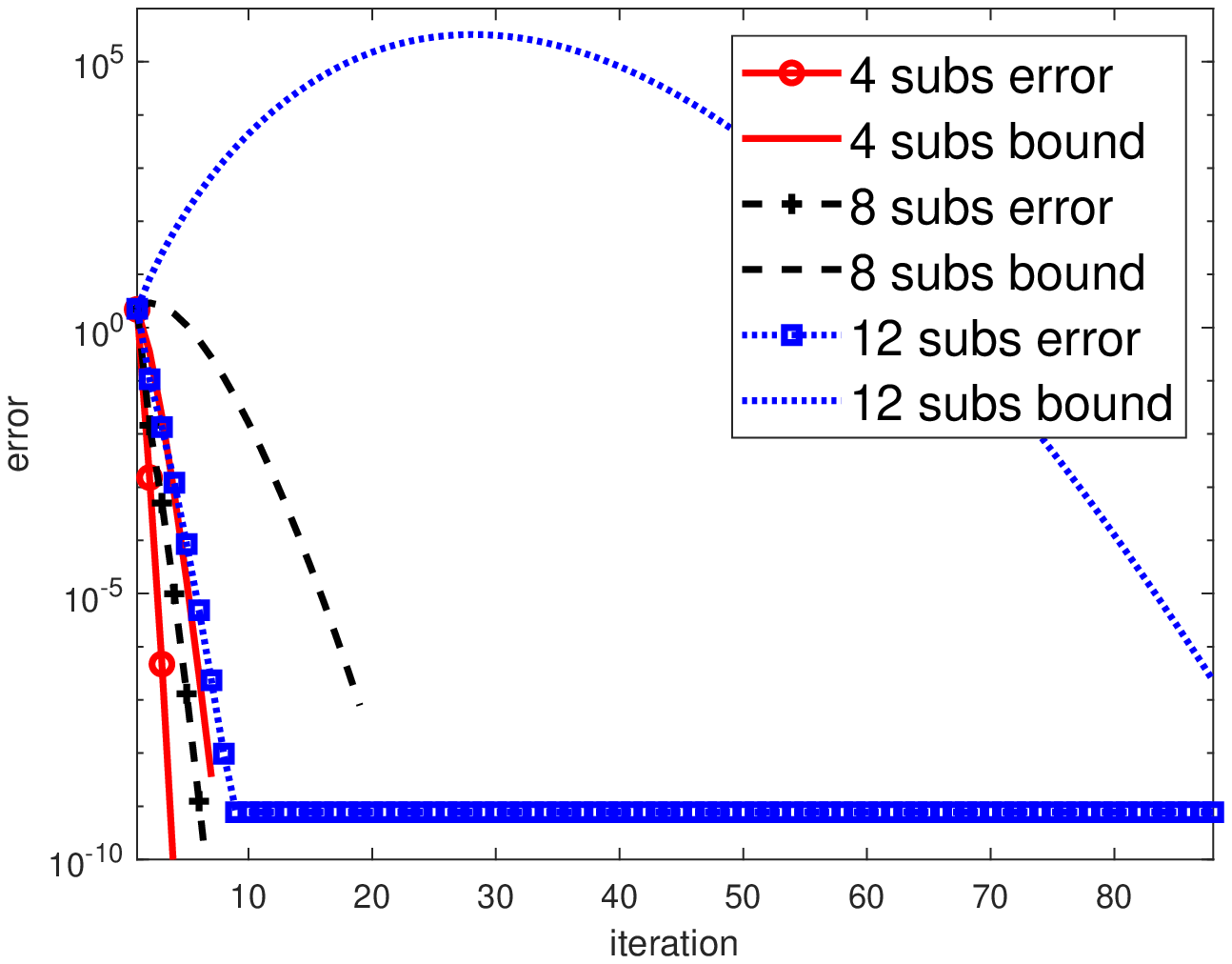}
	\includegraphics[width=0.30\textwidth]{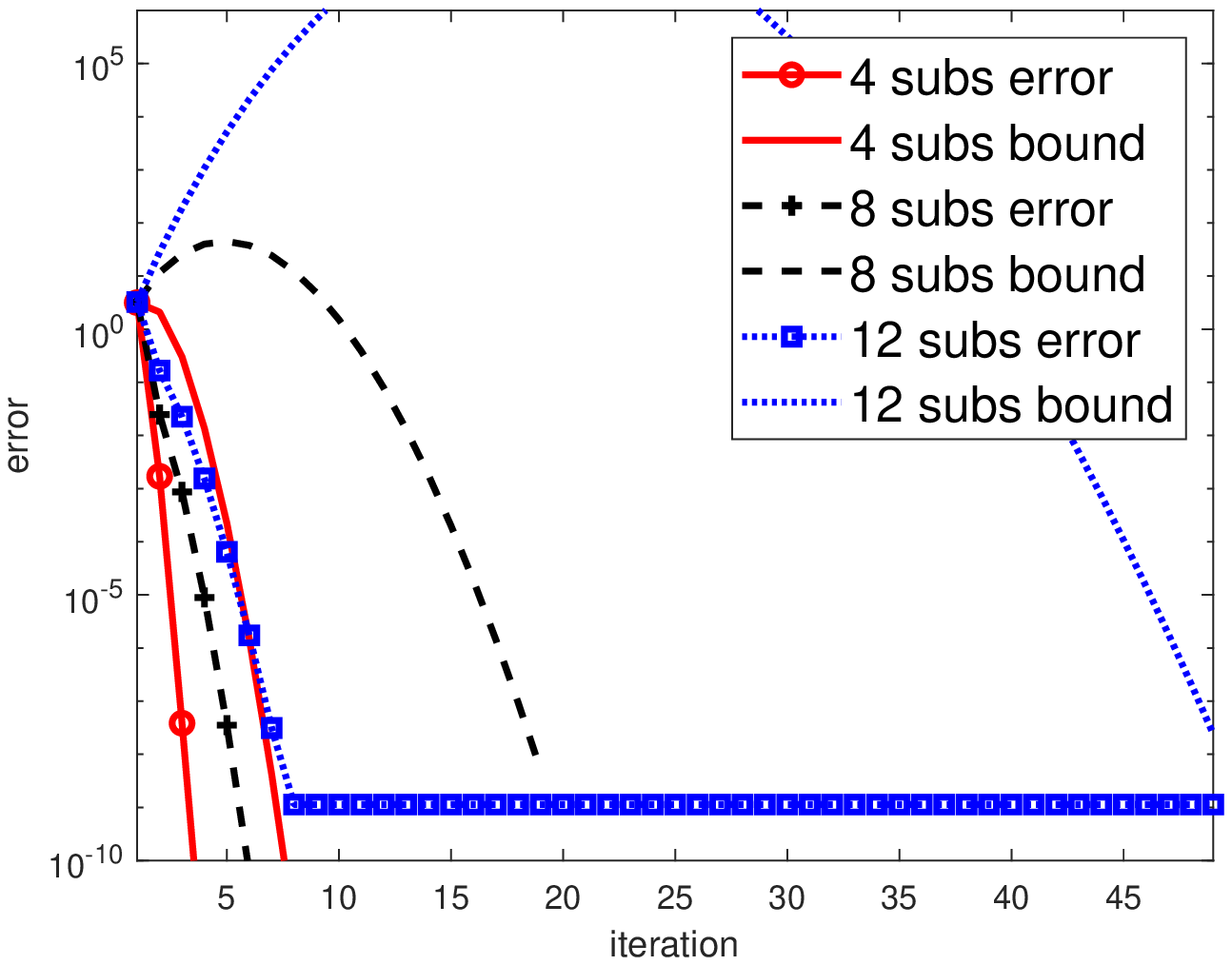}
	\caption{Comparison among numerically measured convergence rate theoritical error at $T=1$, on the left for $2\nu = 0.2$, middle for $2\nu = 0.5$ and on the right for $2\nu = 0.8$}
	\label{NumFig8}
\end{figure}
\begin{figure}
	\centering
	\includegraphics[width=0.30\textwidth]{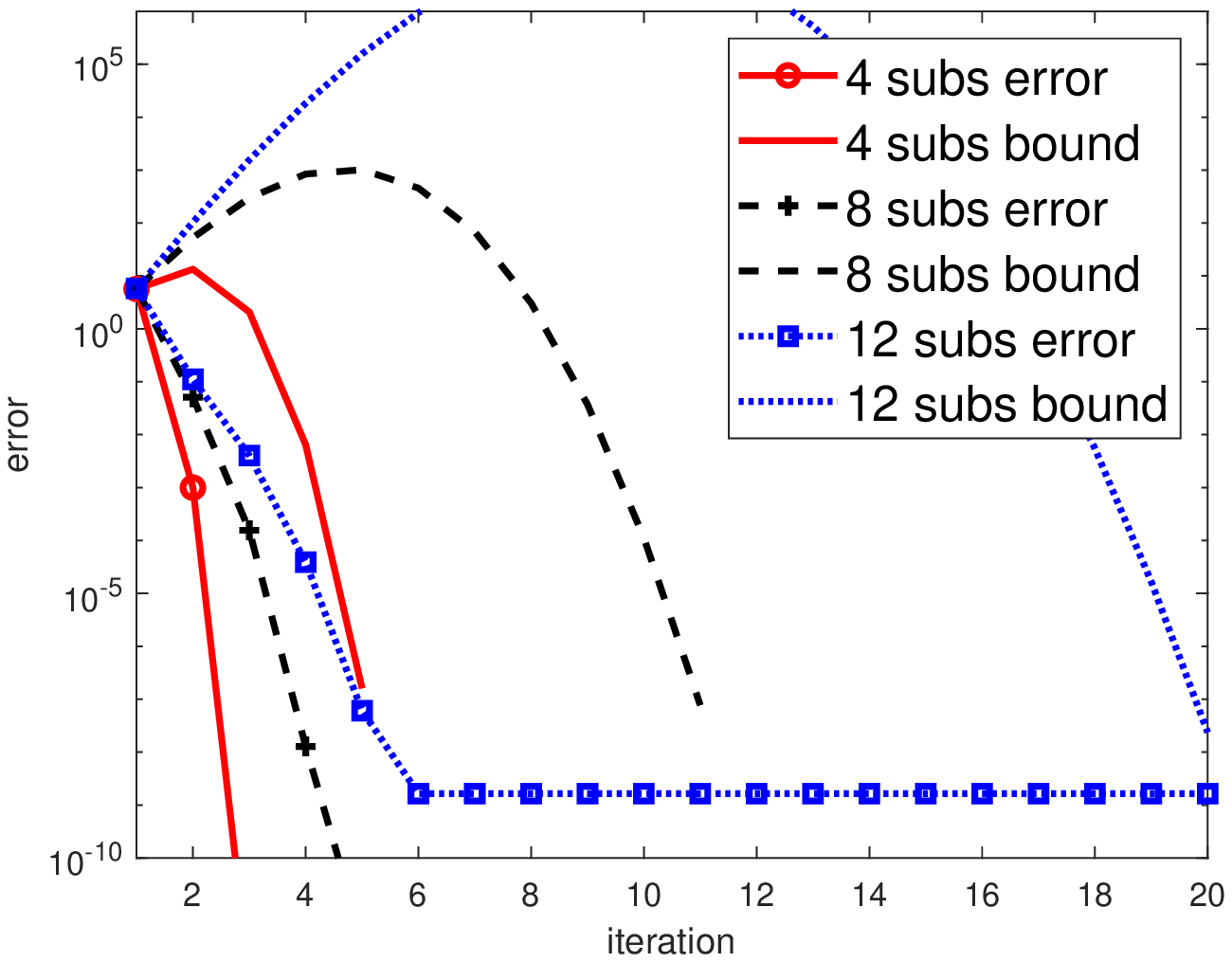}
	\includegraphics[width=0.30\textwidth]{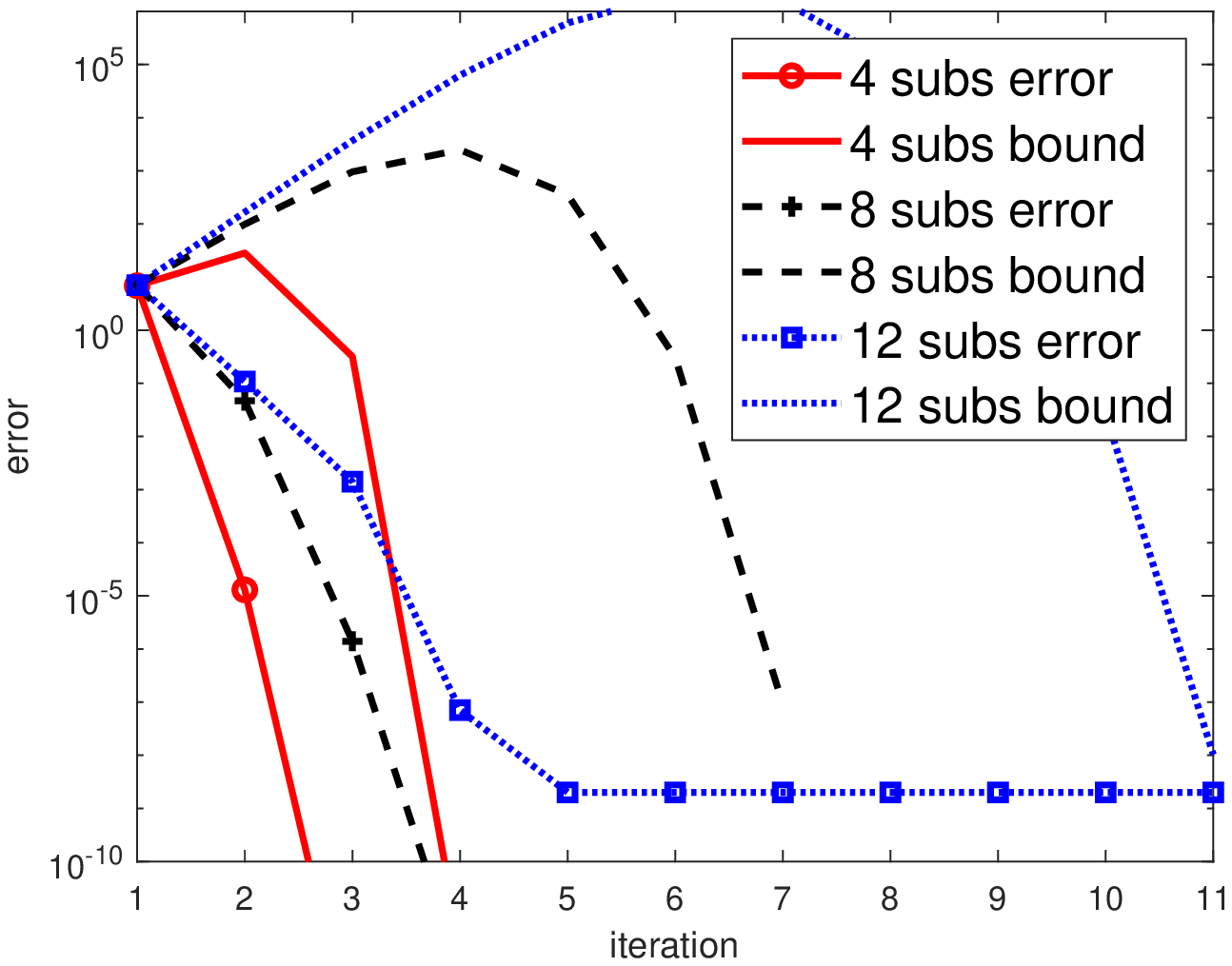}
	\includegraphics[width=0.30\textwidth]{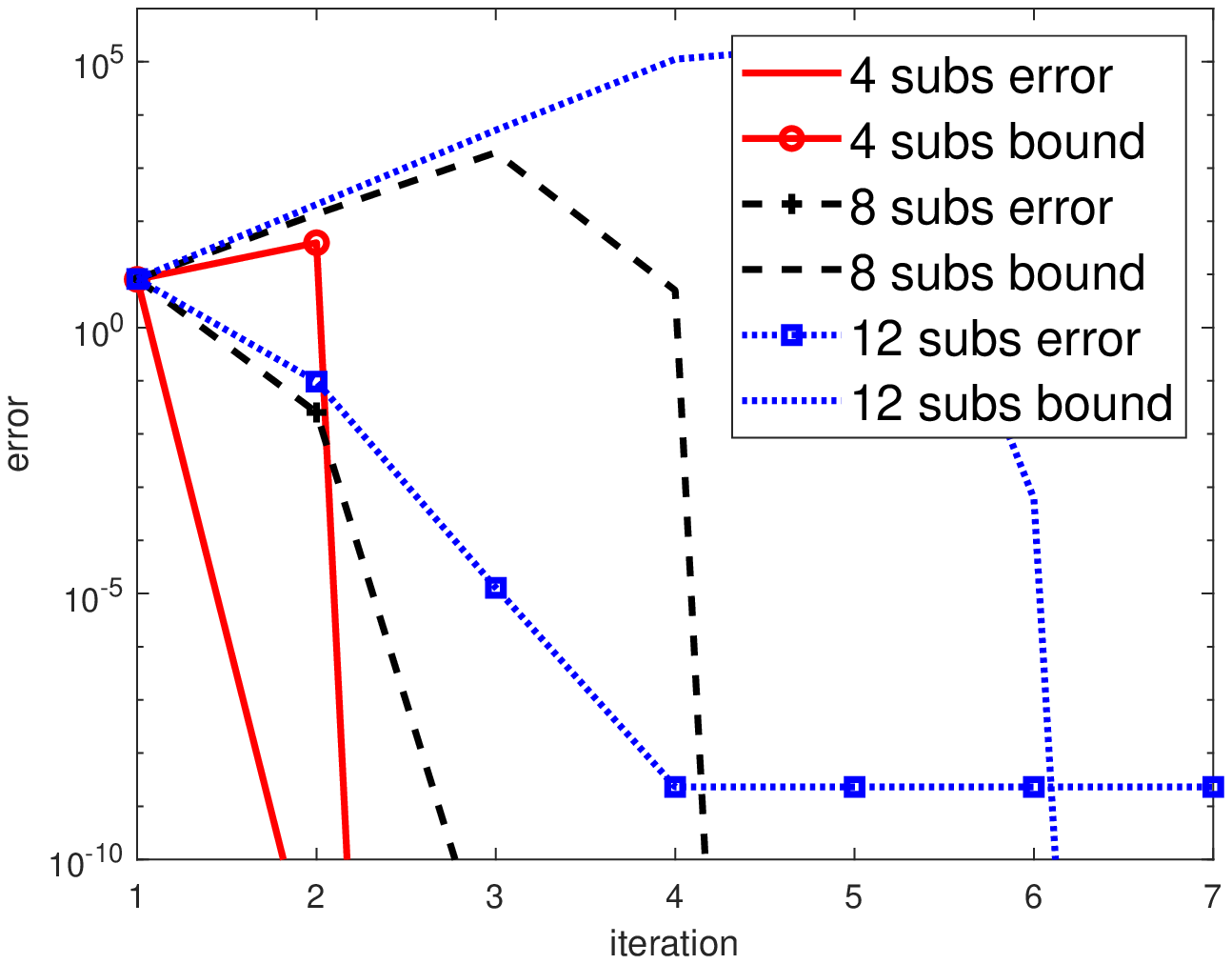}
	\caption{Comparison among numerically measured convergence rate theoritical error at $T=4$, on the left for $2\nu = 1.2$, middle for $2\nu = 1.5$ and on the right for $2\nu = 1.8$}
	\label{NumFig9}
\end{figure}

\subsection{NNWR Algorithm in 2D}
We have used the model problem ~\eqref{NumericalModelProblem} with $F(x,y) = 0$ and $g(x,y) = x(2-x)\exp(-10y^2)$ on the spatial domain $\Omega = (0,2)\times(-5,5)$ and for the time window $T = 1$. We have taken spatial grid sizes $\Delta x = 0.01$, $\Delta y = 0.1$, and the number of temporal grids is $2^8$, as we have chosen graded mesh, so temporal grid size varies for the sub-diffusion case. Our experiments will illustrate the NNWR method in 2D for two subdomains cases where $\Omega_1 = (0,0.5)\times(-5,5)$ and $\Omega_2 = (0.5,2)\times(-5,5)$. In Figure~\ref{NumFig10}, we compare the numerical error with the bounded estimate of the NNWR algorithm for different fractional order $2\nu$.  
\begin{figure}
	\centering
	\includegraphics[width=0.45\textwidth]{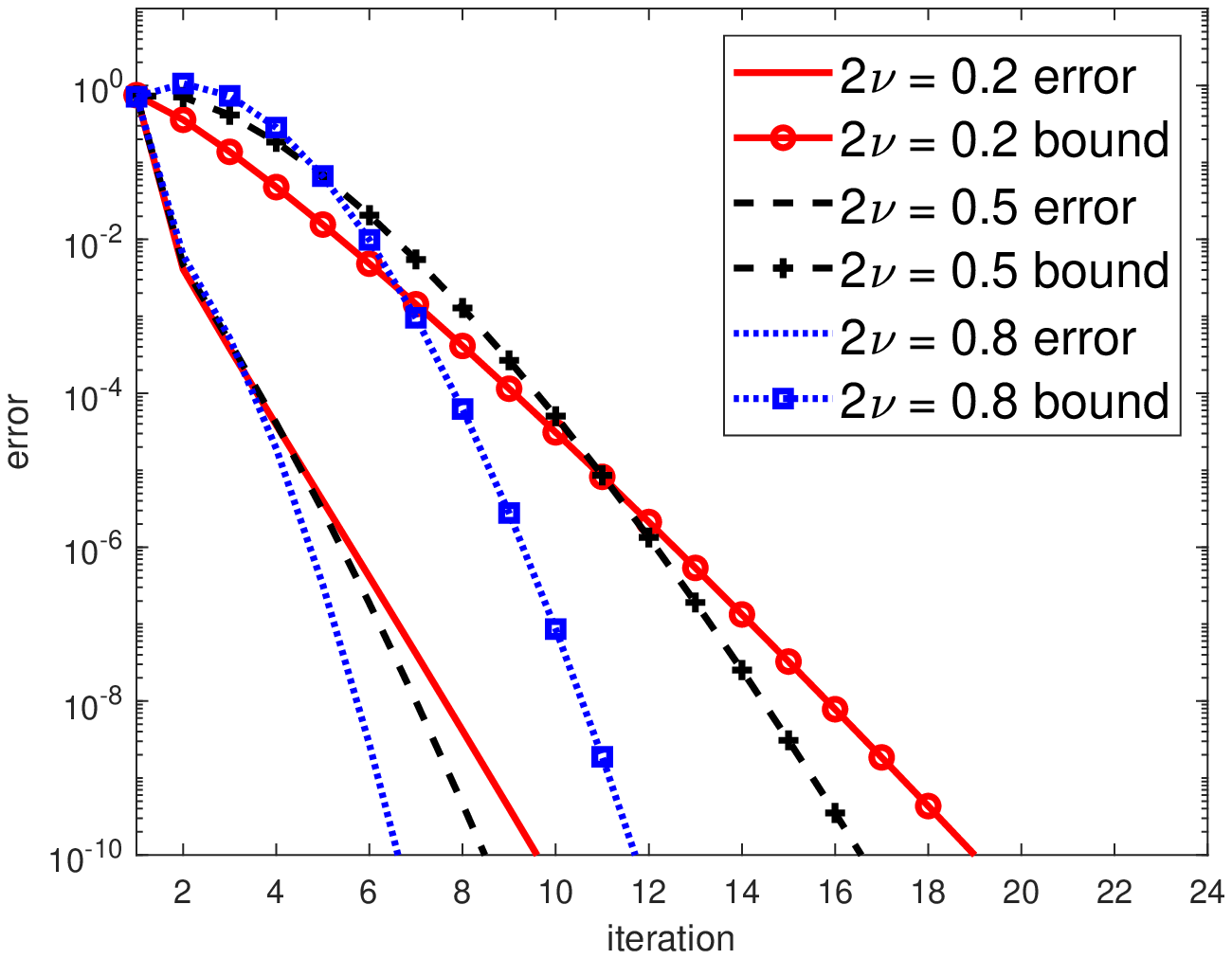}
	\includegraphics[width=0.45\textwidth]{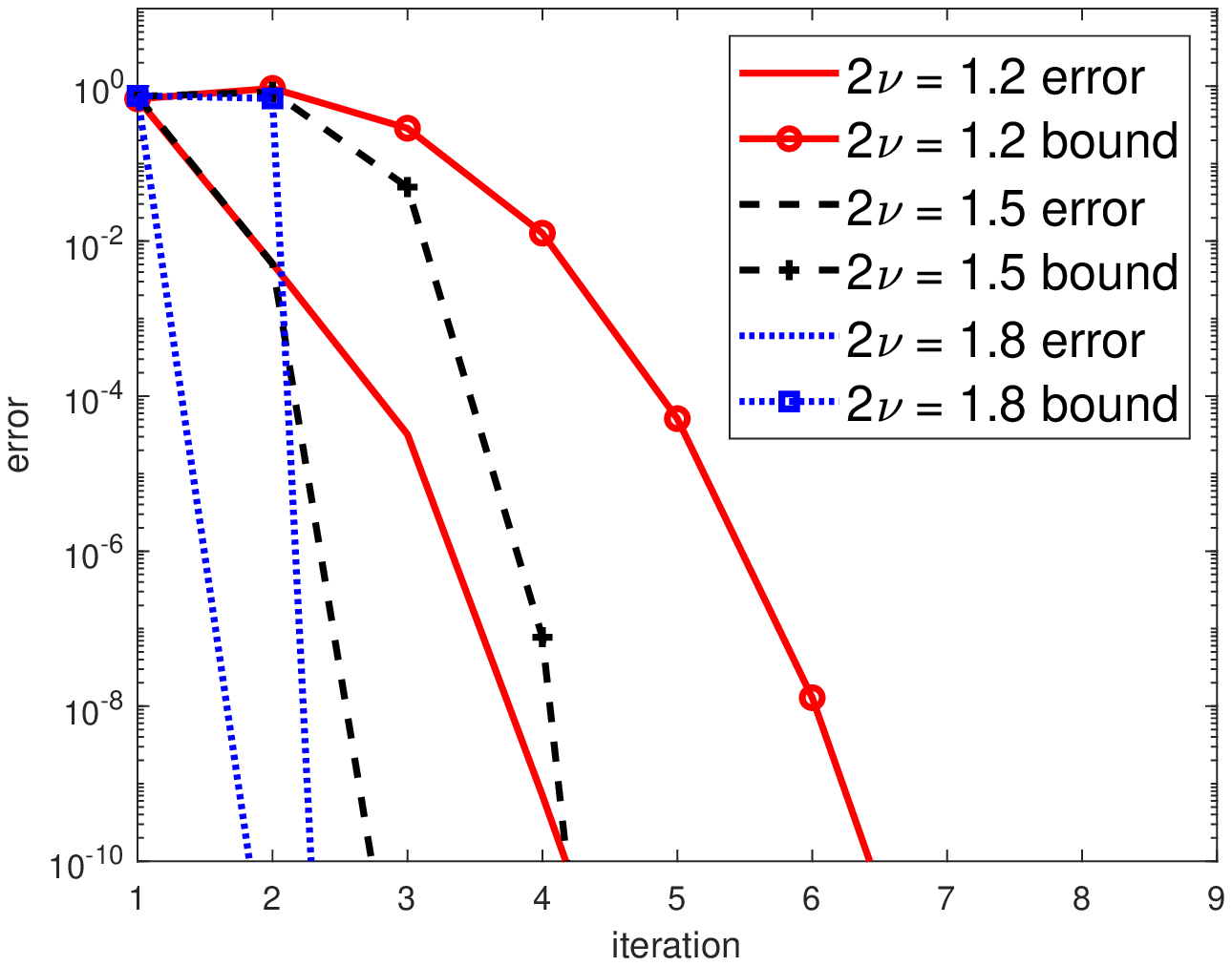}
	\caption{Comparing the numerical convergence and error bound of NNWR in 2D with two subdomains for $T=1$ and various
		fractional order on the left sub-diffusive region, and on the right diffusion wave region}
	\label{NumFig10}
\end{figure}

\section{Conclusions}
We have extended the two classes of space-time algorithms, the Dirichlet-Neumann waveform relaxation (DNWR) and the Neumann-Neumann waveform relaxation (NNWR) algorithms, for time-fractional sub-diffusion and diffusion-wave problems. We have proved rigorously the convergence estimates for those cases in 1D, where we have taken different diffusion coefficients in different subdomains, which leads to the optimal choice of relaxation parameters for each artificial boundary. Using these optimal parameters, our estimate captures the increment of superlinear convergence rate as fractional order increases, and it goes to finite step convergence as fractional goes to two. We have numerically verified all the relaxation parameters and tested all the analytical estimates accordingly. In $2D$ case, we have taken finite lengths in $X$ axis and the entire $Y$ axis to obtain the estimates and numerically verified those.

\bibliographystyle{siam}
\bibliography{paper}
\end{document}